\documentclass[preprint,12pt]{elsarticle}
\usepackage{amsfonts}
\usepackage{amsfonts,latexsym}
\usepackage{amsmath}
\usepackage{amsthm}
\usepackage{amssymb}
\usepackage{mathrsfs}
\usepackage{color}
\usepackage{lineno,hyperref}
\modulolinenumbers[5]

\journal{Journal of Differential Equations}









\bibliographystyle{elsarticle-num}
\topmargin=0cm \oddsidemargin=0.5cm \textwidth=16cm \textheight=21cm
\newtheorem{theorem}{\indent Theorem}[section]
\newtheorem{proposition}[theorem]{\indent Proposition}
\newtheorem{definition}[theorem]{\indent Definition}
\newtheorem{lemma}[theorem]{\indent Lemma}
\newtheorem{assumption}{Assumption}[section]
\newtheorem{remark}[theorem]{\indent Remark}

\begin{document}

\begin{frontmatter}

\title{Smooth center-stable/unstable manifolds and foliations of stochastic evolution equations with non-dense domain}

\author[label1]{Zonghao Li}
\author[label2]{Caibin Zeng}
\author[label1]{Jianhua Huang\corref{cor1}}
\ead{jhhuang32@nudt.edu.cn}
\address[label1]{College of Science, National University of Defense Technology, Changsha 410073, China}
\address[label2]{School of Mathematics, South China University of Technology, Guangzhou 510640, China}
\cortext[cor1]{Corresponding author.}

\begin{abstract}
The current paper is devoted to  the asymptotic behavior of a class of stochastic PDE. More precisely, with the help of the theory of integrated semigroups and a crucial estimate of the random Stieltjes convolution, we study the existence and smoothness of center-unstable invariant manifolds and center-stable foliations for a class of stochastic PDE with non-dense domain through  the Lyapunov-Perron method. Finally, we give two examples about a stochastic age-structured model and a stochastic parabolic equation to illustrate our results.
\end{abstract}

\begin{keyword}
Random dynamical systems, center-unstale manifolds, center-stable foliations, exponential trichotomy, integrated semigroup.
\end{keyword}

\end{frontmatter}


\section{Introduction}\label{sec1}

In this paper, we shall study invariant manifolds and foliations of the following Stratonovich stochastic
evolution equation 
\begin{equation}\label{eq1}
	\left\{
	\begin{array}{l}
	du=Audt+F\left(u\right)dt+u\circ{dW},\\
	u\left(0\right)=u_0\in\overline{D\left(A\right)}. 	\\
	\end{array}\right.
\end{equation}
where $A:D\left(A\right)\subset{X}\to{X}$ is a linear operator and $\overline{D\left(A\right)}\neq{X}$, namely, its domain is non-densely defined. Indeed, $W$ is a one-dimensional Brownian motion and $F:\overline{D\left(A\right)}\to{X}$ is  globally Lipschitz continuous. Note that the Cauchy problem \eqref{eq1} is ill-posed if the Hille-Yosida theory for the $C_0$-semigroup generated by $A$ breaks down. Cauchy problems with a non-dense domain cover several types of differential equations, including delay differential equations, age-structured models and some  evolution equations with nonlinear boundary conditions. Since the classical variation of constants formula is not applicative, integrated semigroup theory is used to define a suitable integrated solution. The concept of integrated semigroups was firstly introduced by Arendt \cite{WA1987,WA2001} and it was applied to study the existence and uniqueness of solutions to such non-homogeneous Cauchy problems in a deterministic setting by Da Prato and Sinestrari \cite{DS1987}. Later on, Thieme \cite{HS1990,HS1990_} established the well-posedness of non-autonomous and semilinear Cauchy problems under a prior Hille-Yosida type estimate for the resolvent of non-densely defined operators. By discarding the mentioned estimate, Magal and Ruan \cite{PM2007,PM2009,PM_2009,Magal2016, MR2018} used the integrated semigroup theory to construct modified variation of constants formula for Cauchy problems with non-dense domain and applied the center manifold theroy to study Hopf bifurcation of age-structured models. Liu , Magal and Ruan studied center-unstable manifolds for Cauchy problems with non-dense domain and applications to stability of Hopf bifurcation\cite{LMR2012}. There are in general two methods to construct invariant manifolds, one is the graph transform method of Hadamard\cite{Ha1901}, and the other is Lypapunov-Perron method of Lyapunov\cite{Ly1947} and Perron\cite{Po1928}, which is an analytic method. Taking random factors into consideration, Neam\c{t}u \cite{AN2020} extended the mentioned theory of integrated semigroups to ill-posed stochastic evolution equations \eqref{eq1} driven by linear white noise and established the existence of random stable/unstable manifolds based on the Lyapunov-Perron method. Moreover, Li and Zeng \cite{LZ2021} studied center manifolds of stochastic evolution equations with non-dense domain driven by linear noise. Wang\cite{BA2021}, Li \emph{et al.}\cite{LZms} studied mean-square invariant manifolds based on mean-square random dynamical systems, presented by Lorenz and Kloeden \cite{LK2012}. Therefore, the mentioned work brings much interest to discern the long-time dynamics of \eqref{eq1}.

There are several results regarding invariant manifolds in the framework of random dynamical systems(RDSs). Mohammed and Scheutzow \cite{Mh1999} studied the existence of local stable and unstable manifolds of stochastic differential equations driven by semimartingales. Duan \emph{et al.} \cite{Duan2004}, Lu and Schmalfu{\ss} \cite{KB2007} and Caraballo \emph{et al.} \cite{Cara2010} studied stable and unstable manifolds for stochastic partial differential equations. Lian and Lu \cite{LL2010} proved a multiplicative ergodic theorem and then use this theorem to establish the stable and unstable manifold theorem for nonuniformly hyperbolic random invariant sets. Li \emph{et al.} \cite{PJ2013} proved the persistence of smooth normally hyperbolic invariant manifolds for dynamical systems under random perturbations. Chen \emph{et al.} \cite{Chen2015} studied center manifolds for stochastic partial differential equations under an assumption of exponential trichotomy. Shi \cite{Shi2020} studied the limiting behavior of center manifolds for a class of singularly perturbed stochastic partial differential equations in terms of the phase spaces. 

The research about invariant foliations for deterministic dynamical systems dates back to the work by Finichel\cite{FN1971} and Pesin\cite{YP1977}. In the framwork of RDSs,  Liu and Qian\cite{LQ1995} studied stable and unstable foliations for finite-dimensional RDS, Lu and Schmalfu{\ss}\cite{KB2008} studied the local theory of invariant foliations for SPDEs. Zeng and Shen \cite{SZ2021} obtained the existence of the invariant stable and unstable foliations of \eqref{eq1}.  For a normally hyperbolic random invariant
manifold, Li \emph{et al.}\cite{PJ2014} proved the existence of invariant foliations of the associated stable and unstable manifolds.  By using the Wong-Zakai approximation, Shen \emph{et al.}\cite{SJKB} proved that the invariant manifolds and stable foliations of the Wong-Zakai approximations converge to those of the Stratonovich stochastic evolution equation, respectively.

With the help of the theory of integrated semigroups, we construct a new variation of constants formula by the resolvent operator  connecting $\overline{D\left(A\right)}$ and $X$. Unlike the argument using the classical Lyapunov-Perron method, the spectral gap condition contains a crucial constant in the derivation of estimation of random Stieltjes-type convolution. The required conditions for smoothness do not appear
to be easily verifiable at first sight.We present a positive answer to the smoothness problem for invariant manifolds and foliations of stochastic evolution
equations \eqref{eq1}. To be precise, we shall establish the conditions for $C^k$-smoothness random center-unstable invariant manifolds and center-stable foliations of \eqref{eq1}.

The remaining part of this article is structured as follows. In  Section \ref{sec2}, we collect some essential results on random dynamical systems and integrated semigroups, and layout the basic assumptions. Then in Section \ref{sec3} and Section \ref{sec4}, we prove the existence and smoothness of invariant center-unstable manifolds and invariant center-stable foliations of \eqref{eq1} respectively. Finally in Section \ref{sec5}, we give two illustrative examples about a age-structured model and a stochastic parabolic equation to verify the main results in the present paper.

\section{Preliminaries}\label{sec2}
\subsection{Random Dynamical Systems}
Consider flows on the probability space $\left(\Omega, \mathcal{F}, \mathbb{P}\right)$, where a flow $\theta : \mathbb{R}\times\Omega\to\Omega$ of $\{{\theta_t}\}_{t\in\mathbb{R}}$ is defined on the sample space $\Omega$ having following properties: 

\begin{itemize}
	\item [$\left(\mathrm{i}\right)$] the mapping $\left(t,\omega\right)\mapsto\theta_{t}\omega$ is $\left(\mathcal{B}\left(\mathbb{R}\right)\otimes\mathcal{F},\mathcal{F}\right)$-measurable.
	 
	\item [$\left(\mathrm{ii}\right)$] $\theta_0=I_{\Omega}$.
 
	\item [$\left(\mathrm{iii}\right)$] $\theta_{t+s}=\theta_t\circ\theta_s$ for all $t,s\in\mathbb{R}$.
\end{itemize}

In addition, we assume that $\mathbb{P}$ is ergodic with respect to $\{{\theta_t}\}_{t\in\mathbb{R}}$. Then the quadruple $\left(\Omega, \mathcal{F}, \mathbb{P},\{{\theta_t}\}_{t\in\mathbb{R}}\right)$ is called a metric dynamical system.  Usually, let $W\left(t\right)$ be a two-sided Wiener process with trajectories in the space $\Omega:=C_0\left(\mathbb{R},\mathbb{R}\right)$ of real continuous functions defined on $\mathbb{R}$ which is a set equipped with compact open topology $\mathcal{F}:=\mathcal{B}\left(C_0\left(\mathbb{R},\mathbb{R}\right)\right)$. On this set we consider the measurable flow $\{{\theta_t}\}_{t\in\mathbb{R}}$ defined by $\theta_t\omega\left(\cdot\right)=\omega\left(\cdot+t\right)-\omega\left(t\right)$ which is called the Wiener shift, the associated distribution $\mathbb{P}$ is a Wiener measure defined on $\mathcal{F}$. A set $\Omega$ is called $\{{\theta_t}\}_{t\in\mathbb{R}}$-invariant if $\theta_t\Omega=\Omega$ for $t\in\mathbb{R}$. For more details, see \cite{Arnold1998}. Using these notations, we could present the definition of a random dynamical system.
\begin{definition}\label{def2.1}
	Let $X$ be a separable Hilbert space. A continuous random dynamical system on  $X$ over a metric dynamical system $\left({\Omega}, \mathcal{F}, \mathbb{P},\{{\theta_t}\}_{t\in\mathbb{R}}\right)$ is a mapping 
	\begin{equation*}
		\varphi:\mathbb{R}\times\Omega\times{X}\to{X},  ~\left(t,\omega,x\right)\mapsto\varphi\left(t,\omega,x\right),
		\end{equation*}
which is $\left(\mathcal{B}\left(\mathbb{R}\right)\otimes\mathcal{F}\otimes\mathcal{B}\left(X\right),\mathcal{B}\left(X\right)\right)$-measurable and satisfies:
	\begin{itemize}
		\item [$\left(\mathrm{i}\right)$] $\varphi\left(0,\omega,x\right)=x$, for $x\in{X}$ and $\omega\in{\Omega}$.
	 
		\item  [$\left(\mathrm{ii}\right)$] $\varphi\left(t+s,\omega,x\right)=\varphi\left(t,\theta_s\omega,\varphi\left(s,\omega,x\right)\right)$, for all $t,s\in\mathbb{R},x\in X$ and all $\omega\in {\Omega}$.
	\end{itemize}
\end{definition}
\begin{definition}\label{def2.2}
	A random set $\mathcal{M}\left(\omega\right)$ is called invariant for a random dynamical system $\varphi\left(t,\omega,x\right)$ if $\varphi\left(t,\omega,\mathcal{M}\left(\omega\right)\right)\subset{\mathcal{M}\left(\theta_t\omega\right)}$ for $t\geq0$.
\end{definition}
We will use a coordinate transform to convert the stochastic partial differential equation \eqref{eq1} into a random evolution equation \cite{Duan2004,Cara2013,Duan2003}. To this end, we consider the linear stochastic differential equation 
\begin{equation}\label{eq2.1}
	dz+{\mu}zdt=dW, 
\end{equation}
where $\mu$ is a positive parameter. We extract the important results on the stationary Ornstein-Uhlenbeck process in the following. 
\begin{lemma}\cite[Lemma 2.1]{Duan2003}\label{lemma2.1}
	\begin{itemize}
		\item[$\left(\mathrm{i}\right)$]There exists a $\{{\theta_t}\}_{t\in\mathbb{R}}$-invariant set $\tilde{\Omega}\subset{C_0\left(\mathbb{R},\mathbb{R}\right)}$ of full measure satisfying the sublinear growth condition
		\begin{equation*}
			\lim_{t\to\pm\infty}\frac{\left|\omega\left(t\right)\right|}{\left|t\right|}=0,~ \omega\in \tilde{\Omega}.
		\end{equation*}
		\item [$\left(\mathrm{ii}\right)$]The random variable $z\left(\omega\right)=-\int_{-\infty}^{0}e^{{s}}\omega\left(s\right)ds$ is well defined for all $\omega\in\tilde{\Omega}$ and generates a unique stationary solution of \eqref{eq2.1} given by 
		\begin{equation*}
			\left(t,\omega\right)\mapsto{z\left(\theta_t\omega\right)}=-\int_{-\infty}^{0}e^{{ s}}\theta_t\omega\left(s\right)ds=-\int_{-\infty}^{0}e^{{s}}\omega\left(s+t\right)ds+\omega\left(t\right).
		\end{equation*}
		The mapping $t\mapsto{z\left(\theta_t\omega\right)}$ is continuous.
		\item [$\left(\mathrm{iii}\right)$]Moreover
	\begin{equation*}
		\lim_{t\to\pm\infty}\frac{\left|z\left(\theta_t\omega\right)\right|}{\left|t\right|}=0,
	\end{equation*}
	for all $\omega\in \tilde{\Omega}$.
 
		\item [$\left(\mathrm{iv}\right)$]In addition 
\begin{equation*}
	\lim_{t\to\pm\infty}\frac{1}{t}\int_{0}^{t}z\left(\theta_s\omega\right)ds=0,
\end{equation*}
		for all $\omega\in \tilde{\Omega}$.		
	\end{itemize}
\end{lemma}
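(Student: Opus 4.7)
The plan is to establish the four assertions in order, combining classical sample-path properties of Brownian motion with Birkhoff's ergodic theorem applied to the Wiener shift.

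For (i), I would start from the strong law of large numbers for Brownian motion, which provides a full-measure set $\Omega_0\subset C_0(\mathbb{R},\mathbb{R})$ on which $|\omega(t)|/|t|\to 0$ as $t\to\pm\infty$. The key observation is that $\Omega_0$ is already $\{\theta_t\}$-invariant: from $\theta_t\omega(s)=\omega(s+t)-\omega(t)$ one obtains $|\theta_t\omega(s)|/|s|\leq(|\omega(s+t)|/|s+t|)\cdot(|s+t|/|s|)+|\omega(t)|/|s|$, and each piece tends to $0$ as $s\to\pm\infty$ with $t$ fixed. Thus $\tilde{\Omega}:=\Omega_0$ suffices for this part.

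For (ii), on $\tilde{\Omega}$ the sublinear growth of $\omega$ together with the exponential weight makes $\int_{-\infty}^{0}e^{s}|\omega(s)|\,ds$ finite, so $z(\omega)$ is well defined. The stated formula for $z(\theta_t\omega)$ follows by substituting $\theta_t\omega(s)=\omega(s+t)-\omega(t)$ and using $\int_{-\infty}^{0}e^{s}\,ds=1$. A direct computation then verifies that $t\mapsto z(\theta_t\omega)$ satisfies the integral form of (2.1) pathwise, and continuity in $t$ is inherited from continuity of $\omega$.

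For (iii), I would use the splitting $z(\theta_t\omega)=\omega(t)-\int_{-\infty}^{0}e^{s}\omega(s+t)\,ds$ together with an $\epsilon$-argument. For any $\epsilon>0$ there is $C_\epsilon>0$ with $|\omega(r)|\leq\epsilon|r|+C_\epsilon$ for all $r\in\mathbb{R}$, so using $|s+t|\leq|s|+|t|$ and $\int_{-\infty}^{0}e^{s}|s|\,ds=1$, the integral is bounded by $\epsilon|t|+\epsilon+C_\epsilon$. Dividing by $|t|$, taking $\limsup$, and letting $\epsilon\to 0$ shows the integral term is $o(|t|)$, while $\omega(t)=o(|t|)$ by (i). Together they yield (iii).

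For (iv), the random variable $z$ is a centered Gaussian under $\mathbb{P}$ (being a Wiener integral of a deterministic $L^2$ kernel), hence lies in $L^1(\Omega,\mathbb{P})$ with $\mathbb{E}[z]=0$. Since $\{\theta_t\}$ is measure-preserving and ergodic, Birkhoff's ergodic theorem yields $\frac{1}{t}\int_{0}^{t}z(\theta_s\omega)\,ds\to 0$ almost surely on a $\{\theta_t\}$-invariant set; intersecting with $\Omega_0$ produces the single invariant full-measure $\tilde{\Omega}$ on which all four claims hold simultaneously. The main obstacle I anticipate is bookkeeping, namely assembling and carrying along one strictly $\{\theta_t\}$-invariant full-measure set throughout; the analytic ingredients themselves are standard.
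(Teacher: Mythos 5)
Your proof is correct. Parts (i)--(iii) follow the standard argument exactly as one would expect: the strong law for Brownian motion gives the sublinear-growth set, the $\{\theta_t\}$-invariance of that set is verified directly from the shift relation, the exponential weight makes $z$ well defined, and the $\epsilon|r|+C_\epsilon$ bound on $|\omega(r)|$ combined with $\int_{-\infty}^0 e^s\,ds=1$ and $\int_{-\infty}^0 e^s|s|\,ds=1$ yields (iii). The paper itself gives no proof for this lemma but merely points to the references, so there is no in-text argument to compare against line by line.

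Where you genuinely diverge from the usual route is in (iv). You invoke Birkhoff's ergodic theorem for the Wiener shift, using that $z$ is a centered Gaussian, hence in $L^1(\Omega,\mathbb{P})$ with mean zero, and that $\{\theta_t\}$ is measure-preserving and ergodic. This is correct, but it forces you to intersect the Birkhoff null set with $\Omega_0$ and argue separately for $t\to+\infty$ and $t\to-\infty$ (by applying the theorem to the reversed flow). The argument more commonly used in this literature, and the one that makes (iv) an immediate corollary of (i)--(iii) on the \emph{same} set $\tilde{\Omega}$, is purely pathwise: integrate \eqref{eq2.1} to get
\[
\mu\int_0^t z(\theta_s\omega)\,ds \;=\; z(\omega)-z(\theta_t\omega)+\omega(t),
\]
divide by $t$, and let $t\to\pm\infty$; the right-hand side vanishes by (i) and (iii). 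This avoids any appeal to ergodicity and sidesteps the invariant-set bookkeeping you flag as the main obstacle. Your Birkhoff route is sound given that the paper's standing assumptions do include ergodicity of $\mathbb{P}$ under $\{\theta_t\}$, but it is heavier machinery than what the result actually requires, and it obscures the fact that (iv) holds pointwise on the same $\tilde{\Omega}$ constructed in (i).
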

\begin{proof}
	See \cite[Lemma 2.2]{PZL2023}.
\end{proof}
From now on, we will work on $\left(\tilde{\Omega}, \tilde{\mathcal{F}}, \tilde{\mathbb{P}},\left\{{\theta_t}\right\}_{t\in\mathbb{R}}\right)$, where $\tilde{\mathcal{F}}$ is the trace algebra of $\tilde{\Omega}$ and the restriction of Wiener measure to this $\sigma$-algebra $\tilde{\mathcal{F}}$ is denoted by $\tilde{\mathbb{P}}$. With a slight abuse of notation, we again denote the metric dynamical system by $\left({\Omega}, {\mathcal{F}}, \mathbb{P},\left\{{\theta_t}\right\}_{t\in\mathbb{R}}\right)$ in the following context.

\subsection{Integrated Semigroups}
In this subsection, we collect some basic results about non-densely defined operators and integrated semigroups from \cite{PM2007,PM2009,PM_2009}. Since $A$ is a non-densely defined linear operator in a Hilbert space $X$, its resolvent set is denoted by $\rho\left(A\right)=\left\{\lambda\in\mathbb{C}:\lambda{I}-A \text{~is invertible}\right\}$, and $\rho\left(A\right)\neq\emptyset$. The spectrum of $A$ is denoted by $\sigma\left(A\right):=\mathbb{C}\backslash\rho\left(A\right)$. Moreover, let $X_0=\overline{D\left(A\right)}$ equipped with the norm $\Vert\cdot\Vert$ and construct the part of $A$ in $X_0$ denoted by $A_0:D\left(A_0\right)\subset{X_0}\to{X_0}$, which is a linear operator on $X_0$ defined by 
\begin{equation*}
A_0x=Ax,\forall{x\in{D\left(A_0\right)}:=\left\{y\in{D\left(A\right):Ay\in{X_0}}\right\}}.	
\end{equation*}
Assume that there exists a constant $\vartheta$ satisfying $\left(\vartheta,+\infty\right)\subset\rho\left(A\right)$, it is easy to check that for each $\lambda>\vartheta$, 
\begin{equation*}
	D\left(A_0\right)=\left(\lambda{I}-A\right)^{-1}X_0,  ~{{\left(\lambda{I}-A_0\right)}^{-1}=\left(\lambda{I}-A\right)^{-1}|}_{X_0}.
\end{equation*}
Therefore, it follows from \cite[Lemma 2.1]{PM_2009} that $\rho\left(A\right)=\rho\left(A_0\right)$, and thus $\sigma\left(A\right)=\sigma\left(A_0\right)$. 

We next discuss some basic assumptions about operator $A$ and $A_0$. Assume that $A_0$ is a Hille-Yosida operator in $X_0$, i.e., there exist two constants, $\vartheta\in\mathbb{R}$ and $M\geq1$, such that $\left(\vartheta,+\infty\right)\subset\rho\left(A_0\right)$ and
\begin{equation*}
	\left\Vert{\left(\lambda{I}-A_0\right)}^{-k}\right\Vert_{{\mathcal L}\left(X_0\right)}\leq\frac{M}{{\left(\lambda-\vartheta\right)}^{k}},~\forall\lambda>\vartheta,~\forall{k}\geq1.
\end{equation*}
And we assume
\begin{equation*}
	\lim_{\lambda\to+\infty}{\left(\lambda{I}-A\right)}^{-1}x=0,\forall{x\in{X}}.
\end{equation*}
\begin{lemma}\label{th3.1}\cite[Theorem 5.3]{APazy1983} A linear operator $A_0:D\left(A_0\right)\subset{X_0}\to{X_0}$ is an infinitesimal generator of a $C_0$-semigroup ${\left(T\left(t\right)\right)}_{t\geq0}$ satisfying $\left\Vert{T\left(t\right)}\right\Vert\leq{Me^{\vartheta{t}}}$, if and only if
\begin{itemize}
	\item [$\left(\mathrm{i}\right)$] $A_0$ is densely defined in $X_0$;
	\item [$\left(\mathrm{ii}\right)$] $A_0$ is a Hille-Yosida operator in $X_0$.
\end{itemize}
\end{lemma}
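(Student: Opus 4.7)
The plan is to prove this via the classical two-direction argument: the easy necessity using the Laplace transform representation of the resolvent, and the harder sufficiency via Yosida approximations.

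For the necessity direction, I would assume $A_0$ generates a $C_0$-semigroup $(T(t))_{t\geq 0}$ with $\|T(t)\| \le M e^{\vartheta t}$. Density of $D(A_0)$ in $X_0$ is a standard consequence of strong continuity: for any $x \in X_0$ the Cesàro averages $t^{-1}\int_0^t T(s)x\,ds$ lie in $D(A_0)$ and converge to $x$ as $t \to 0^+$. For the resolvent estimate, I would write, for $\lambda > \vartheta$,
\begin{equation*}
(\lambda I - A_0)^{-1} x = \int_0^\infty e^{-\lambda s} T(s)x\,ds,
\end{equation*}
differentiate $(k-1)$ times with respect to $\lambda$ to obtain the representation of $(\lambda I - A_0)^{-k}$, and then bound it by $M\int_0^\infty s^{k-1} e^{-(\lambda-\vartheta)s}/(k-1)!\,ds = M/(\lambda-\vartheta)^k$.

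For the sufficiency direction, which is the main part, I would introduce the Yosida approximations $A_\lambda := \lambda A_0 R(\lambda, A_0) = \lambda^2 R(\lambda, A_0) - \lambda I$ for $\lambda > \vartheta$, where $R(\lambda, A_0) = (\lambda I - A_0)^{-1}$. These are bounded, so each generates a uniformly continuous semigroup $T_\lambda(t) := e^{tA_\lambda}$. The first key step is to use the Hille-Yosida estimate to bound
\begin{equation*}
\|T_\lambda(t)\| = e^{-\lambda t} \|e^{\lambda^2 t R(\lambda,A_0)}\| \le e^{-\lambda t} \sum_{k=0}^\infty \frac{(\lambda^2 t)^k}{k!} \frac{M}{(\lambda-\vartheta)^k} \le M e^{\lambda\vartheta t/(\lambda-\vartheta)},
\end{equation*}
which for $\lambda$ large yields bounds close to $M e^{\vartheta t}$ on bounded intervals. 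The second step is to show $A_\lambda x \to A_0 x$ as $\lambda \to \infty$ for $x \in D(A_0)$, which uses density of $D(A_0)$ together with the uniform bound $\|\lambda R(\lambda, A_0)\| \le M\lambda/(\lambda - \vartheta)$.

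The third and most delicate step is to show that $\{T_\lambda(t)x\}$ is Cauchy in $X_0$ as $\lambda \to \infty$, uniformly on compact $t$-intervals. I would exploit that $T_\lambda$ and $T_\mu$ commute (both are functions of $R(\lambda,A_0)$ and $R(\mu,A_0)$, which commute), so that
\begin{equation*}
T_\lambda(t)x - T_\mu(t)x = \int_0^t \frac{d}{ds}\bigl[T_\mu(t-s)T_\lambda(s)x\bigr]\,ds = \int_0^t T_\mu(t-s)T_\lambda(s)(A_\lambda - A_\mu)x\,ds,
\end{equation*}
and then use the uniform bound on $T_\lambda, T_\mu$ together with $A_\lambda x - A_\mu x \to 0$ to conclude. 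Defining $T(t)x := \lim_{\lambda\to\infty} T_\lambda(t)x$ first on $D(A_0)$ and extending by density to $X_0$ using the uniform bound, I would then verify the semigroup properties $T(0) = I$, $T(t+s) = T(t)T(s)$, and strong continuity by passing to the limit in the corresponding identities for $T_\lambda$. Finally, to identify the generator as exactly $A_0$, I would check that for $x \in D(A_0)$, $T_\lambda(t)A_\lambda x \to T(t)A_0 x$ uniformly on compacts, integrate $T_\lambda(t)x - x = \int_0^t T_\lambda(s)A_\lambda x\,ds$, pass to the limit, and differentiate at $t=0$; a maximality argument (comparing resolvent sets) then shows the generator cannot be a proper extension of $A_0$.

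The main obstacle is this third step — obtaining the uniform convergence of the Yosida semigroups — since it requires the commutation trick and careful control of composed semigroup norms; everything else is either a direct computation or a density argument. Since the statement is quoted verbatim from Pazy, I would ultimately just cite \cite{APazy1983} rather than reproduce the full argument.
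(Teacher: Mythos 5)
Your outline is the standard Yosida-approximation proof from Pazy, and it is correct (including the bound $\|T_\lambda(t)\|\le M e^{\lambda\vartheta t/(\lambda-\vartheta)}$). The paper itself gives no proof and simply cites Pazy's Theorem 5.3, which is also what you say you would ultimately do, so your approach matches the paper's.
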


\begin{lemma}\label{lemma3.1}
\cite[Lemma 2.1]{PM2007} Let $\left(X,\left\Vert\cdot\right\Vert\right)$ be a Banach space and $A:D\left(A\right)\subset{X}\to{X}$ be a linear operator. Assume that there exists $\vartheta\in{\mathbb{R}}$, such that $\left(\vartheta,+\infty\right)\subset\rho\left(A\right)$ and 
\begin{equation*}
	\limsup_{\lambda\to+\infty}\lambda{\left\Vert{\left(\lambda{I}-A\right)}^{-1}\right\Vert}_{{\mathcal L}\left(X_0\right)}<+\infty.
\end{equation*}
Then the following assertions are equivalent:
\begin{itemize}
	\item [$\left(\mathrm{i}\right)$] $\lim_{\lambda\to+\infty}\lambda{\left(\lambda{I}-A\right)}^{-1}x=x,\forall{x\in{X_0}}$.
 
	\item [$\left(\mathrm{ii}\right)$] $\lim_{\lambda\to+\infty}{\left(\lambda{I}-A\right)}^{-1}x=0,\forall{x\in{X}}$.
 
	\item [$\left(\mathrm{iii}\right)$] $\overline{D\left(A_0\right)}=X_0$.
\end{itemize}
\end{lemma}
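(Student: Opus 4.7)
The natural strategy is to close the cycle $\mathrm{(iii)}\Rightarrow\mathrm{(i)}\Rightarrow\mathrm{(ii)}\Rightarrow\mathrm{(iii)}$, using throughout the uniform bound $\Vert\lambda(\lambda I-A)^{-1}\Vert_{\mathcal{L}(X_0)}\le M$ for $\lambda$ sufficiently large, which is a direct consequence of the $\limsup$ hypothesis, together with the identities $(\lambda I-A)^{-1}|_{X_0}=(\lambda I-A_0)^{-1}$ and $(\lambda I-A)^{-1}X_0=D(A_0)$ already recalled in the excerpt. For $\mathrm{(iii)}\Rightarrow\mathrm{(i)}$ I would apply the classical Hille--Yosida density argument: on $D(A_0)$ the commutation identity $\lambda(\lambda I-A_0)^{-1}y-y=(\lambda I-A_0)^{-1}A_0y$ together with the resolvent bound gives convergence of order $O(1/\lambda)$, and a standard $\varepsilon/3$ approximation based on $\overline{D(A_0)}=X_0$ and the uniform operator bound then extends the convergence to every $x\in X_0$.

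The step I expect to be the main obstacle is $\mathrm{(i)}\Rightarrow\mathrm{(ii)}$, since one must transfer convergence known only on $X_0$ to the entire space $X$. The trick I have in mind is to regularise: given $x\in X$, fix some $\mu>\vartheta$ and set $z=(\mu I-A)^{-1}x\in D(A)\subset X_0$. Using $(\mu I-A)z=x$ together with the decomposition $(\mu I-A)z=(\mu-\lambda)z+(\lambda I-A)z$ and applying $(\lambda I-A)^{-1}$, one obtains
\begin{equation*}
(\lambda I-A)^{-1}x=z-\lambda(\lambda I-A)^{-1}z+\mu(\lambda I-A)^{-1}z.
\end{equation*}
By $\mathrm{(i)}$ applied to $z\in X_0$, the middle term tends to $-z$, while the last term is bounded in norm by $M\mu\Vert z\Vert/\lambda\to 0$, so $(\lambda I-A)^{-1}x\to 0$, as required.

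Finally, for $\mathrm{(ii)}\Rightarrow\mathrm{(iii)}$ I would use the identity $\lambda(\lambda I-A)^{-1}x-x=(\lambda I-A)^{-1}Ax$, valid on $D(A)$: applied to $Ax\in X$, hypothesis $\mathrm{(ii)}$ yields $\lambda(\lambda I-A)^{-1}x\to x$ for every $x\in D(A)$. Since each approximant lies in $(\lambda I-A)^{-1}X_0=D(A_0)$, we deduce $D(A)\subset\overline{D(A_0)}$, and then $X_0=\overline{D(A)}\subset\overline{D(A_0)}\subset X_0$ forces $\mathrm{(iii)}$. Apart from the resolvent manipulation at the heart of $\mathrm{(i)}\Rightarrow\mathrm{(ii)}$, every other ingredient is a standard Hille--Yosida-type computation combined with a density argument, so I do not anticipate any further technical difficulty.
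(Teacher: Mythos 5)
Your proof is correct. Note, however, that the paper does not actually supply its own proof of this lemma --- it is imported verbatim as a citation to Magal and Ruan \cite{PM2007}, so there is no in-paper argument to compare against. Your cyclic scheme $\mathrm{(iii)}\Rightarrow\mathrm{(i)}\Rightarrow\mathrm{(ii)}\Rightarrow\mathrm{(iii)}$ is the standard one: the $\varepsilon/3$ density argument for $\mathrm{(iii)}\Rightarrow\mathrm{(i)}$ is exactly the classical Hille--Yosida computation, the regularisation $z=(\mu I-A)^{-1}x$ together with the resolvent identity
\begin{equation*}
(\lambda I-A)^{-1}x=z-\lambda(\lambda I-A)^{-1}z+\mu(\lambda I-A)^{-1}z
\end{equation*}
correctly transfers convergence from $X_0$ to $X$ using only the bound $\Vert\lambda(\lambda I-A)^{-1}\Vert_{\mathcal{L}(X_0)}\le M$, and the observation that $\lambda(\lambda I-A)^{-1}x\in D(A_0)$ whenever $x\in D(A)\subset X_0$ (via $D(A_0)=(\lambda I-A)^{-1}X_0$) cleanly closes the cycle. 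I see no gaps.
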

 Now we are ready to introduce the definition of integrated semigroups.
\begin{definition}\label{def2.4}
	Let $X$ be a Banach space. A family of bounded linear operators $\{S\left(t\right)\}_{t\geq0}$ on $X$ is called an integrated semigroup if
	\begin{itemize}
		\item[$\left(\mathrm{i}\right)$]$S\left(0\right)=0$.
	 
		\item[$\left(\mathrm{ii}\right)$]The map $t\to{S\left(t\right)}x$ is continuous on $[0,+\infty)$ for each $x\in{X}$.
 
		\item[$\left(\mathrm{iii}\right)$]$S\left(t\right)$ satisfies
		\begin{equation*}
			S\left(s\right)S\left(t\right)=\int_{0}^{s}\left(S\left(r+t\right)-S\left(r\right)\right)dr, \forall{s,t\geq0}.
		\end{equation*}
	\end{itemize}	
\end{definition}

According to Lemma \ref{th3.1} and Lemma \ref{lemma3.1}, $A_0$ generates a $C_0$-semigroup on $X_0$ denoted by ${\left(T\left(t\right)\right)}_{t\geq0}$. It can also be denoted by $\left(e^{A_0t}\right)_{t\geq0}$. Besides, as concluded in \cite[Proposition 2.5]{PM2009}, $A$ generates an integrated semigroup on $X$ denoted by ${\left(S\left(t\right)\right)}_{t\geq0}$ and for each $x\in{X}, t\geq0$, ${\left(S\left(t\right)\right)}_{t\geq0}$ is given by
\begin{equation}\label{eq2.2}
	S\left(t\right)x=\lambda\int_{0}^{t}T\left(s\right){\left(\lambda{I}-A\right)}^{-1}xds+\left(I-T\left(t\right)\right){\left(\lambda{I}-A\right)}^{-1}x.
\end{equation}
Also, the map $t\to{S\left(t\right)}x$ is continuously differentiable if and only if $x\in{X_0}$ and 
\begin{equation*}
	\frac{dS\left(t\right)x}{dt}=T\left(t\right)x, ~\forall{t\geq0},\ \forall{x\in{X_0}}.
\end{equation*}

Next we introduce the following definition of MR operator which is named by Magal and Ruan\cite{PM2007}.
\begin{definition}[MR operator]
	Let $A$ be a closed linear operator in a Banach space $X$ with $\overline{D\left(A\right)}\neq X$ and $A_0$ be the part of $A$ in $X_0=\overline{D\left(A\right)}$. $A$ is called a MR operator if 
	\begin{itemize}
		\item [$\left(\mathrm{i}\right)$] $A_0$ generates a $C_0$-semigroup ${\left(T\left(t\right)\right)}_{t\geq0}$ on $X_0$, i.e. $A_0$ is a Hille-Yosida operator in $X_0$, and $\overline{D(A_0)}=X_0$. Then $A$ generates an integrated semigroup ${\left(S\left(t\right)\right)}_{t\geq0}$ in $X$.
		\item [$\left(\mathrm{ii}\right)$]Assume that there exists real number $\tau_0>0$, and a non-decreasing map $\delta:[0,\tau_0]\to[0,+\infty)$ such that for each $f\in{C^1\left([0,\tau_0];X\right)}$,
\begin{equation*}
	\left\Vert\frac{d}{dt}\int_0^t S\left(t-s\right)f\left(s\right)ds\right\Vert\leq\delta\left(t\right)\underset{s\in[0,t]}{\sup}\left\Vert{f\left(s\right)}\right\Vert,~\forall{t\in[0,\tau_0]},
\end{equation*}	
where $\delta$ satisfies
\begin{equation*}
	\lim_{t\to0^+}\delta\left(t\right)=0.
\end{equation*}

	\end{itemize}
\end{definition}
We assume that $A$ is a MR operator from now on.
\begin{lemma}\cite[Lemma 2.6]{PM2007}\label{lemma3.2}
Let $\tau_0>0$ be fixed. 	For each $f\in{C^1\left([0,\tau_0];X\right)}$, define
\begin{equation*}
	\left(S*f\right)\left(t\right)=\int_0^t{S\left(t-s\right)f\left(s\right)}ds,~\forall{t}\in[0,\tau_0].
\end{equation*}
Then we have the following:
\begin{itemize}
	\item[$\left(\mathrm{i}\right)$]The map $t\to{\left(S*f\right)\left(t\right)}$ is continuously differentiable on $[0,\tau_0]$.
	
	\item[$\left(\mathrm{ii}\right)$]$\left(S*f\right)\left(t\right)\in{D\left(A\right)},~\forall{t\in[0,\tau_0]}$.
	\item[$\left(\mathrm{iii}\right)$]if we set $u\left(t\right)=\frac{d}{dt}\left(S*f\right)\left(t\right)$, then
		\begin{equation*}
			u\left(t\right)=A\int_0^t{u\left(s\right)}ds+\int_0^t{f\left(s\right)}ds, ~\forall{t\in\left[0,\tau_0\right]}.
		\end{equation*}
	\item[$\left(\mathrm{iii}\right)$]For each $\lambda>\vartheta$, and each $t\in[0,\tau_0]$, we have
\begin{equation*}
	{\left(\lambda{I}-A\right)}^{-1}\frac{d}{dt}\left(S*f\right)\left(t\right)=\int_0^tT\left(t-s\right){\left(\lambda{I}-A\right)}^{-1}f\left(s\right)ds.
\end{equation*}
\end{itemize}
\end{lemma}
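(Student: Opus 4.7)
The plan is to dispatch item (i) by a direct Leibniz computation, to prove item (iv) via a key commutation identity between $(\lambda I - A)^{-1}$ and $S(\tau)$, and to then deduce items (ii) and (iii) as corollaries of (iv) together with the classical theory of inhomogeneous linear evolution equations for the $C_0$-semigroup $T$ on $X_0$. For (i), I would change variables by $\sigma = t - s$ to rewrite $(S*f)(t) = \int_0^t S(\sigma) f(t-\sigma)\,d\sigma$; since $S$ is strongly continuous and $f \in C^1$, Leibniz's rule yields
\begin{equation*}
u(t) = S(t) f(0) + \int_0^t S(\sigma) f'(t-\sigma)\,d\sigma,
\end{equation*}
whose continuity in $t$ is immediate.

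The core step for (iv) is the commutation identity
\begin{equation*}
(\lambda I - A)^{-1} S(\tau) x = \int_0^\tau T(s)(\lambda I - A)^{-1} x \, ds, \qquad x \in X,\ \tau \ge 0,
\end{equation*}
which I would derive by applying $(\lambda I - A)^{-1}$ to formula \eqref{eq2.2} and collapsing the result via the standard semigroup identity $T(\tau) z - z = \int_0^\tau T(s) A_0 z\,ds$ applied to $z = (\lambda I - A_0)^{-1}(\lambda I - A)^{-1} x \in D(A_0)$, taking care that $(\lambda I - A)^{-1}$ maps $X$ into $D(A)\subset X_0$ so that each occurrence of $T(s)$ acts on a valid argument. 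Inserting this into $(\lambda I - A)^{-1}(S*f)(t) = \int_0^t (\lambda I - A)^{-1} S(t-s) f(s)\,ds$ and differentiating in $t$ (the $s=t$ boundary term vanishes because the inner integral has length zero there) produces exactly $\int_0^t T(t-s)(\lambda I - A)^{-1} f(s)\,ds$; since $(\lambda I - A)^{-1}$ commutes with $\tfrac{d}{dt}$, this is (iv).

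Items (ii) and (iii) then fall out of (iv) combined with the standard Cauchy-problem theory for $T$. Setting $g(s) := (\lambda I - A)^{-1} f(s) \in D(A) \subset X_0$, which inherits $C^1$-regularity from $f$, identity (iv) reads $v(t) := (\lambda I - A)^{-1} u(t) = \int_0^t T(t-s) g(s)\,ds$, and the $C^1$-regularity of $g$ promotes this mild solution of $v' = A_0 v + g$, $v(0) = 0$, to a classical one with $v(t) \in D(A_0)$ and $v'(t) = A_0 v(t) + g(t)$. Integrating from $0$ to $t$ and then using the resolvent identity $A(\lambda I - A)^{-1} = \lambda (\lambda I - A)^{-1} - I$ yields $(S*f)(t) \in D(A)$ and $A(S*f)(t) = u(t) - \int_0^t f(s)\,ds$, which is (iii). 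The main obstacle is the commutation identity underpinning (iv): because $D(A)$ is non-dense in $X$, one cannot appeal to a $C_0$-semigroup on all of $X$, and the explicit representation \eqref{eq2.2} must be handled delicately so that every $T(s)$-action falls on an element of $X_0$.
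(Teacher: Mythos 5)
The paper does not prove this lemma itself; it cites \cite[Lemma 2.6]{PM2007}, so there is no in-text proof to compare against. Your argument is, however, correct and self-contained, and it captures the genuine mechanism behind the result: the whole lemma turns on establishing the resolvent--convolution identity (iv), and then using it to push the question from the ill-behaved space $X$ down to $X_0$, where the ordinary Pazy-style theory for the $C_0$-semigroup $T$ applies.

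Two points worth making explicit, since they are the places where a less careful reader might stumble. First, your commutation identity
\begin{equation*}
(\lambda I - A)^{-1} S(\tau) x = \int_0^\tau T(s)(\lambda I - A)^{-1} x \, ds
\end{equation*}
does indeed follow from \eqref{eq2.2}: writing $z = (\lambda I - A)^{-1}x \in D(A) \subset X_0$ and $w = (\lambda I - A_0)^{-1}z \in D(A_0)$, the term $(I - T(\tau))w = -\int_0^\tau T(s) A_0 w\, ds$ together with $A_0 w = \lambda w - z$ makes the $\lambda \int_0^\tau T(s) w\, ds$ contribution cancel, leaving $\int_0^\tau T(s) z\, ds$ exactly. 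Second, and this is the subtle step, showing $(S*f)(t) \in D(A)$ is not simply the observation that $(\lambda I - A)^{-1}(S*f)(t) \in D(A_0)$; that alone would only give $(S*f)(t) \in X_0$. What actually closes the argument is that after integrating $v'(s) = A_0 v(s) + g(s)$ and expanding $A_0 v(s) = \lambda v(s) - u(s)$ via the resolvent identity, one can \emph{solve} for $\int_0^t u(s)\,ds$ and find it equals
\begin{equation*}
(\lambda I - A)^{-1}\Bigl[\lambda \int_0^t u(s)\,ds - u(t) + \int_0^t f(s)\,ds\Bigr],
\end{equation*}
so $(S*f)(t) = \int_0^t u(s)\,ds$ lands in the range of $(\lambda I - A)^{-1}$, i.e.\ in $D(A)$, and applying $\lambda I - A$ to both sides delivers (iii) at once. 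Your proposal states this integration-and-resolvent step somewhat tersely; spelled out as above it is airtight. Overall the proposal is a correct proof of the cited lemma.
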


 For each $f\in{C^1\left([0,\tau_0];X\right)}$, we denote
\begin{equation*}
	\left(S\diamond{f}\right)\left(t\right)=\frac{d}{dt}\left(S*f\right)\left(t\right),~\forall{t\in[0,\tau_0]},
\end{equation*}
which is defined as a Stieltjes convolution.
According to Lemma \ref{lemma3.2} and Lemma \ref{lemma3.1}$\left(i\right)$, we have 
\begin{equation*}
	\left(S\diamond{f}\right)\left(t\right)=\lim_{\lambda\to+\infty}\int_{0}^{t}T\left(t-s\right)\lambda{\left(\lambda{I}-A\right)}^{-1}f\left(s\right)ds,~\forall{t\in[0,\tau_0]},~\forall\lambda>\vartheta.
\end{equation*} 

\begin{lemma}\label{lem2.8}\cite[Proposition 2.13]{PM_2009}
For $\kappa>\vartheta$, there exists $C_\kappa>0$ such that $f\in{C\left(\mathbb{R}^+;X\right)}$ and
\begin{equation*}
	\left\Vert\left(S\diamond{f}\right)\left(t\right)\right\Vert\leq{C_\kappa}\underset{s\in[0,t]}{\sup}e^{\kappa\left(t-s\right)}\left\Vert{f\left(s\right)}\right\Vert,~\forall{t\geq0}.
\end{equation*} 
Moreover, for each $\varepsilon>0$, if $\tau_\varepsilon>0$ satisfying $M\delta\left(\tau_\varepsilon\right)\leq\varepsilon$, it holds
\begin{equation*}
	C_\kappa=\frac{2\varepsilon{\max\left(1,e^{-\kappa\tau_\varepsilon}\right)}}{1-e^{\left(\vartheta-\kappa\right)\tau_\varepsilon}}. 
\end{equation*}
\end{lemma}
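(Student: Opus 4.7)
The plan is to derive the estimate by partitioning $[0,t]$ into sub-intervals of length $\tau_\varepsilon$, applying the MR-operator bound on each sub-interval, and summing a geometric series. Choose $\tau_\varepsilon\in(0,\tau_0]$ so that $M\delta(\tau_\varepsilon)\leq\varepsilon$; such a $\tau_\varepsilon$ exists because $\delta(t)\to 0$ as $t\to 0^+$.

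First I would establish the concatenation formula
\[
(S\diamond f)(a+b) \;=\; T(b)\,(S\diamond f)(a) \;+\; (S\diamond f_a)(b), \qquad f_a(s):=f(s+a),
\]
for $a,b\geq 0$. This follows from the limit representation
\[
(S\diamond f)(t) \;=\; \lim_{\lambda\to+\infty}\int_0^t T(t-s)\lambda(\lambda I-A)^{-1}f(s)\,ds
\]
by splitting the integral at $s=a$ and using the semigroup property $T(t-s)=T(b)T(a-s)$ on $[0,a]$. Writing $t=n\tau_\varepsilon+r$ with $n\in\mathbb{N}\cup\{0\}$ and $r\in[0,\tau_\varepsilon)$, iteration of this formula produces the decomposition
\[
(S\diamond f)(t) \;=\; \sum_{k=0}^{n-1} T\bigl(t-(k+1)\tau_\varepsilon\bigr)\,(S\diamond f_{k\tau_\varepsilon})(\tau_\varepsilon) \;+\; (S\diamond f_{n\tau_\varepsilon})(r).
\]

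Next I would bound each summand. Setting $G:=\sup_{s\in[0,t]}e^{\kappa(t-s)}\|f(s)\|$, so that $\|f(s)\|\leq G\,e^{-\kappa(t-s)}$, an elementary monotonicity argument gives
\[
\sup_{s\in[k\tau_\varepsilon,(k+1)\tau_\varepsilon]} e^{-\kappa(t-s)} \;\leq\; \max\bigl(1,e^{-\kappa\tau_\varepsilon}\bigr)\,e^{-\kappa(t-(k+1)\tau_\varepsilon)},
\]
where the $\max$ absorbs both signs of $\kappa$ — this is precisely what generates the corresponding factor in $C_\kappa$. Combining this with the MR estimate $\|(S\diamond f_{k\tau_\varepsilon})(\tau_\varepsilon)\|\leq\delta(\tau_\varepsilon)\sup_{s\in[k\tau_\varepsilon,(k+1)\tau_\varepsilon]}\|f(s)\|$ and the Hille--Yosida bound $\|T(\sigma)\|\leq M e^{\vartheta\sigma}$ yields, for each full chunk,
\[
\bigl\|T\bigl(t-(k+1)\tau_\varepsilon\bigr)(S\diamond f_{k\tau_\varepsilon})(\tau_\varepsilon)\bigr\| \;\leq\; \varepsilon\,G\,\max\bigl(1,e^{-\kappa\tau_\varepsilon}\bigr)\,e^{(\vartheta-\kappa)(t-(k+1)\tau_\varepsilon)}.
\]
Since $\vartheta-\kappa<0$, the telescoping sum satisfies $\sum_{k=0}^{n-1}e^{(\vartheta-\kappa)(t-(k+1)\tau_\varepsilon)}\leq (1-e^{(\vartheta-\kappa)\tau_\varepsilon})^{-1}$. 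The remainder $(S\diamond f_{n\tau_\varepsilon})(r)$ is estimated analogously by $\varepsilon G\max(1,e^{-\kappa\tau_\varepsilon})$, and using $1+(1-e^{(\vartheta-\kappa)\tau_\varepsilon})^{-1}\leq 2(1-e^{(\vartheta-\kappa)\tau_\varepsilon})^{-1}$ recovers exactly the advertised $C_\kappa$.

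Finally I would extend from $f\in C^1$ (the regularity appearing in the MR definition) to $f\in C(\mathbb{R}^+;X)$ via a density argument, or more directly by observing that the limit representation above is well-defined for continuous $f$ and inherits the bound by a Fatou/dominated-convergence step. The main technical hurdle is justifying the concatenation formula rigorously through the approximating integrals $\int_0^t T(t-s)\lambda(\lambda I-A)^{-1}f(s)\,ds$ and commuting the limit in $\lambda$ with the splitting of the time interval; once this is in place the remaining estimates are geometric-series bookkeeping with the two signs of $\kappa$ handled uniformly by the $\max(1,e^{-\kappa\tau_\varepsilon})$ factor.
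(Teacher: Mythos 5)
The paper does not supply its own proof of Lemma~\ref{lem2.8}; it is cited verbatim from Magal--Ruan, so the relevant comparison is with the argument in that reference. Your proposal is correct and follows the same line of attack used there: split $[0,t]$ into blocks of length $\tau_\varepsilon$, use the cocycle (concatenation) identity for $S\diamond{}$, bound each block by the MR estimate $\delta(\tau_\varepsilon)$ combined with the Hille--Yosida bound $\|T(\sigma)\|\le Me^{\vartheta\sigma}$, absorb the two signs of $\kappa$ into the factor $\max(1,e^{-\kappa\tau_\varepsilon})$, and sum the geometric series with ratio $e^{(\vartheta-\kappa)\tau_\varepsilon}<1$; the remainder block contributes one extra term, and $1+(1-q)^{-1}\le 2(1-q)^{-1}$ gives the advertised constant $C_\kappa$. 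The extension from $C^1$ to $C(\mathbb R^+;X)$ by density is the routine final step, as you note.

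One small bookkeeping point worth making explicit: for the remainder term $(S\diamond f_{n\tau_\varepsilon})(r)$ there is no semigroup factor $T(\cdot)$ in front, so the MR estimate alone gives only $\delta(\tau_\varepsilon)$ rather than $M\delta(\tau_\varepsilon)$; you still obtain the bound $\varepsilon G\max(1,e^{-\kappa\tau_\varepsilon})$ because $M\geq 1$ forces $\delta(\tau_\varepsilon)\le\varepsilon/M\le\varepsilon$. This is implicit in your write-up but deserves to be stated, since it is the reason the remainder can be absorbed into the same $\varepsilon$-constant as the other blocks.
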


\begin{assumption}\label{as2.1}
$F: X_0 \to X$ is globally Lipschitz continuous, i.e.
\begin{equation*}
	\left\Vert F\left(u_1\right)-F\left(u_2\right)\right\Vert\leq{L\left\Vert u_1-u_2\right\Vert},
\end{equation*} 
 for any $u_1,u_2\in X_0$, where $L$ is the Lipschitz constant and $F\left(0\right) = 0$.
\end{assumption}

To our aim, we define the coordinate transform 
\begin{equation*}
	v=\Xi\left(u,\omega\right)=ue^{-z\left(\theta_t\omega\right)}, 
\end{equation*}   
and its reverse transform 
\begin{equation*}
	u=\Xi^{-1}\left(v,\omega\right)=ve^{z\left(\theta_t\omega\right)},
\end{equation*}
for $t\geq0$. Performing the transformation $\Xi\left(v,\omega\right)$ into \eqref{eq1}, we obtain
\begin{equation}\label{eq4}
	\left\{
	\begin{array}{l}
	dv=Avdt+z\left(\theta_t\omega\right)vdt+G\left({\theta_t}\omega,v\right)dt, \\
	v\left(0\right)=x\in{X_0},  	\\
	\end{array}\right.
\end{equation}
where $G\left(\omega,v\right)=e^{-z\left(\omega\right)}F\left(e^{z\left(\omega\right)}v\right)$. Obviously,  $G$ and $F$ have the same Lipschitz constant.

\begin{definition}\label{def2.9}
	A mapping $v\in{C\left(\mathbb{R}^+;X\right)}$ is called an integrated solution of \eqref{eq4} if and only if for $\forall{t\geq0}$,
	\begin{itemize}
		\item [$\left(\mathrm{i}\right)$]$\int_0^tv\left(s\right)ds\in{D\left(A\right)}$.
		
		\item [$\left(\mathrm{ii}\right)$] It holds
\begin{equation*}
	v\left(t\right)=x+A\int_0^tv\left(s\right)ds+\int_0^tz\left(\theta_s\omega\right)v\left(s\right)ds+\int_0^tG\left({\theta_s}\omega,v\left(s\right)\right)ds.
\end{equation*}
	\end{itemize}
\end{definition}

For the linear part of \eqref{eq4}, we define
\begin{equation*}
	\phi_{A_0}\left(t,s\right)=e^{A_0\left(t-s\right)+\int_{s}^{t}z\left(\theta_r\omega\right)dr}=T\left(t-s\right)e^{\int_{s}^{t}z\left(\theta_r\omega\right)dr}.
\end{equation*}

\subsection{Exponential Trichotomy}

\begin{assumption}\label{as2.2}
	We assume that $T\left(t\right)$ satisfies the exponential trichotomy with exponents $\alpha>\gamma\ge0\ge-\gamma>-\beta$ and bound $K\geq1$, more precisely,
	\begin{itemize}
		\item [$\left(\mathrm{i}\right)$]there exist two continuous linear projection operators with finite-rank $\Pi_{0c}\in{L}\left(X_0\right)\backslash\{0\}$ and $\Pi_{0u}\in{L}\left(X_0\right)$ such that 
		\begin{equation*}
\Pi_{0u}\Pi_{0c}=\Pi_{0c}\Pi_{0u}=0, ~\Pi_{0k}T\left(t\right)=T\left(t\right)\Pi_{0k},~\forall{t\geq0},~\forall{k\in\{c,u\}}.
\end{equation*}
Denote 
\begin{equation*}
	\Pi_{0s}=I_{X_0}-\left(\Pi_{0c}+\Pi_{0u}\right),~X_{0k}=\Pi_{0k}X_0,~k\in\{c,u,s\}.
	\end{equation*}
	\item [$\left(\mathrm{ii}\right)$]Denote $X_{0ij}=X_{0i}\oplus X_{0j}, \Pi_{0ij}=\Pi_{0i}+\Pi_{0j}~i,j\in\{c,u,s\},~i\neq j$. Then $\Pi_{0cu}T\left(t\right)=T\left(t\right)\Pi_{0cu}$. The restriction $T\left(t\right)|_{R\left(\Pi_{0cu}\right)},~t\geq0$ is an isomorphism of the range $R\left(\Pi_{0cu}\right)$ of $\Pi_{0cu}$ onto itself and define $T\left(t\right),~t<0$ as the inverse map.
\item [$\left(\mathrm{iii}\right)$] For all $x\in{X_0}$, it holds 
\begin{equation}\label{eq5}
{\left\Vert{T}\left(t\right)\Pi_{0c}x\right\Vert}\leq{K}e^{\gamma|t|}\left\Vert{	\Pi_{0c}x}\right\Vert,~\forall{t\in\mathbb{R}},
\end{equation}
\begin{equation}\label{eq6}
{\left\Vert{T}\left(t\right)\Pi_{0u}x\right\Vert}\leq{K}e^{\alpha{t}}\left\Vert{\Pi_{0u}x}\right\Vert,~\forall{t\leq0},
\end{equation}
\begin{equation}\label{eq7}
{\left\Vert{T}\left(t\right)\Pi_{0s}x\right\Vert}\leq{K}e^{-\beta{t}}\left\Vert{\Pi_{0s}x}\right\Vert,~\forall{t\geq0}, 
\end{equation}
\end{itemize}
\end{assumption}
\begin{remark}\label{rem}
Assume $\alpha>\gamma\ge0\ge-\gamma>-\beta$. If $X_0$ is a finite dimensional space and there exist eigenvalues with real part being zero, less than zero and greater than zero, the exponential trichotomy holds. If $X_0$ is an infinite dimensional space and the spectrum of $A_0$ satisfies $\sigma\left(A_0\right)=\sigma^{0s}\cup\sigma^{0c}\cup\sigma^{0u}$, where $\sigma^{0s}={\left\{\lambda\in\sigma\left(A_0\right):\rm Re\left(\lambda\right)\leq-\beta\right\}}$, $\sigma^{0c}={\left\{\lambda\in\sigma\left(A_0\right):\left|\rm Re\left(\lambda\right)\right|\leq\gamma\right\}}$, $\sigma^{0u}={\left\{\lambda\in\sigma\left(A_0\right):\rm Re\left(\lambda\right)\geq\alpha\right\}}$, and $A_0$ generates a strong continuous semigroup, then the exponential trichotomy holds too (see \cite[page 267]{TG1993}. And if the spectrum of $A_0$ satisfies $\sigma\left(A_0\right)=\sigma^{0s}\cup\sigma^{0u}$, where $\sigma^{0s}$ and $\sigma^{0u}$ are the same as above, then the exponential dichotomy holds. Since $\sigma\left(A\right)=\sigma\left(A_0\right)$, the spectrum of $A$ can be split as $A_0$. 
\end{remark}
Since we only require the Hille-Yosida condition on $X_0$, one needs to extend the mentioned projections from $X_0$ to $X$ and outline the associated statement below.  
\begin{lemma}\cite[Proposition 3.5]{PM_2009}\label{lem2.11}
	Let $\Pi_0:X_0\to{X_0}$ be a bounded linear projection satisfying
\begin{equation*}
	\Pi_0{\left(\lambda{I}-A_0\right)}^{-1}={\left(\lambda{I}-A_0\right)}^{-1}\Pi_0,~\forall{\lambda>\vartheta}.
\end{equation*}
and
\begin{equation*}
	\Pi_0X_0\subset{D\left(A_0\right)}, ~\text{and}~{A_0|}_{\Pi_0X_0}~\text{is bounded}.
\end{equation*}
Then there exists a unique bounded projection $\Pi:X\to{X}$ such that
\begin{itemize}
	\item [$\left(\mathrm{i}\right)$]${\Pi|}_{X_0}=\Pi_0$;
	
	\item [$\left(\mathrm{ii}\right)$]$\Pi\left(X\right)\subset{X_0}$;
	
	\item [$\left(\mathrm{iii}\right)$]$\Pi{\left(\lambda{I}-A\right)}^{-1}={\left(\lambda{I}-A\right)}^{-1}\Pi$,~$\forall{\lambda>\vartheta}$.
\end{itemize}
\end{lemma}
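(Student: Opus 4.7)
The natural idea is to use the resolvent of $A$ as a bridge between $X$ and $X_0$ in order to extend $\Pi_0$. Fix any $\lambda > \vartheta$ and define
\begin{equation*}
\Pi x := (\lambda I - A_0)\,\Pi_0\,(\lambda I - A)^{-1} x,\qquad x\in X.
\end{equation*}
The composition $X \xrightarrow{(\lambda I-A)^{-1}} D(A)\subset X_0 \xrightarrow{\Pi_0} \Pi_0 X_0 \subset D(A_0) \xrightarrow{(\lambda I - A_0)} X_0$ is well defined thanks to $\Pi_0 X_0\subset D(A_0)$, and each step is bounded, the last one precisely because $A_0|_{\Pi_0 X_0}$ is bounded by hypothesis. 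Hence $\Pi\in\mathcal{L}(X)$ with $\Pi(X)\subset X_0$, which gives (ii).

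For (i), I would first derive the auxiliary commutation $(\lambda I - A_0)\Pi_0 y = \Pi_0(\lambda I - A_0)y$ for $y\in D(A_0)$ from the given identity $\Pi_0(\lambda I - A_0)^{-1} = (\lambda I - A_0)^{-1}\Pi_0$ and the inclusion $\Pi_0 X_0\subset D(A_0)$. If $x\in X_0$, then $(\lambda I - A)^{-1}x = (\lambda I - A_0)^{-1}x$, so this commutation yields $\Pi x = \Pi_0 x$. Because $\Pi x\in X_0$, property (i) in turn gives $\Pi^2 x = \Pi_0\Pi x = (\lambda I - A_0)\Pi_0^2(\lambda I - A)^{-1}x = \Pi x$, so $\Pi$ is a projection. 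For (iii), fix $\mu>\vartheta$; I would commute the two resolvents of $A$, replace the inner $(\mu I - A)^{-1}$ by $(\mu I - A_0)^{-1}$ (valid since $(\lambda I - A)^{-1}x\in X_0$), pull $(\mu I - A_0)^{-1}$ past $\Pi_0$ by hypothesis and past $(\lambda I - A_0)$ by the standard resolvent commutation on $D(A_0)$, and finally use $\Pi x\in X_0$ to identify $(\mu I - A_0)^{-1}\Pi x = (\mu I - A)^{-1}\Pi x$, arriving at $\Pi(\mu I - A)^{-1}x = (\mu I - A)^{-1}\Pi x$.

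Uniqueness is forced by (i)--(iii): for any competitor $\tilde{\Pi}$ and any $x\in X$, the element $(\lambda I - A)^{-1}x$ lies in $X_0$, so (i) gives $\tilde{\Pi}(\lambda I - A)^{-1}x = \Pi_0(\lambda I - A)^{-1}x$, while (iii) and (ii) give $\tilde{\Pi}(\lambda I - A)^{-1}x = (\lambda I - A)^{-1}\tilde{\Pi}x = (\lambda I - A_0)^{-1}\tilde{\Pi}x$; applying $(\lambda I - A_0)$ to both sides yields $\tilde{\Pi}x = \Pi x$. This simultaneously shows that the a priori $\lambda$-dependent definition of $\Pi$ is in fact independent of $\lambda$. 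I do not expect a serious obstacle beyond careful domain bookkeeping: the twin assumptions $\Pi_0 X_0\subset D(A_0)$ and boundedness of $A_0$ on $\Pi_0 X_0$ are precisely what is needed to make $(\lambda I - A_0)\Pi_0$ a bounded map on $X_0$, while the identity $(\lambda I - A_0)^{-1} = (\lambda I - A)^{-1}|_{X_0}$ is the key glue linking the non-densely-defined setting to the $C_0$-semigroup setting.
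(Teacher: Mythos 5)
Your construction $\Pi = (\lambda I - A_0)\Pi_0(\lambda I - A)^{-1}$ is correct, and the domain bookkeeping, the projection property, the commutation with $(\mu I - A)^{-1}$, and the uniqueness/$\lambda$-independence argument all go through as you describe. The paper itself simply cites \cite[Proposition 3.5]{PM_2009} without reproducing the proof, and your resolvent-based extension formula is precisely the standard argument used there, so this is essentially the same approach.
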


With the mentioned information at hand, we are ready to state the decomposition on $X$. For each $k\in\left\{c,u,cu\right\}$, denote by $\Pi_k:X\to{X}$ the unique extension of $\Pi_{0k}$ and let $\Pi_s=I_X-\left(\Pi_c+\Pi_u\right)$. Then we have for each ${p\in\left\{c,u,s,cu\right\}}$, $\Pi_p{\left(\lambda{I}-A\right)}^{-1}={\left(\lambda{I}-A\right)}^{-1}\Pi_p,~\forall{\lambda>\vartheta}$, and $\Pi_p\left(X\right)\subset{X_0}$. Also, $\Pi_p\left(X_0\right)=\Pi_{0p}\left(X_0\right)=X_{0p}$. Denote
	$X_p=\Pi_p\left(X\right),~\text{and}~A_p=A|_{X_p}$. Then for each $k\in\left\{c,u\right\}$, we have $X_{0k}=X_k,~X_0=X_{0s}\oplus{X_{0c}}\oplus{X_{0u}},~\text{and}~X=X_{s}\oplus{X_{c}}\oplus{X_{u}}$.
Herein, $X_{0c}$, $X_{0u}$ and $X_{0s}$ are called center subspace, unstable subspace and stable subspace of $X_0$ respectively.  
For more details, we refer to\cite[page 22]{PM_2009}.

\section{Smooth Center-unstable manifolds}\label{sec3}
This section is dedicated to the existence of invariant manifolds of \eqref{eq1} through the Lyapunov-Perron method. 
\begin{definition}
	Let $\mathcal{M}=\{\mathcal{M}\left(\omega\right):\omega\in \Omega\}$ be a family of $X_0$, then $\mathcal M$ is called a $C^k$ random center-unstable invariant manifold of the random dynamical system $\varphi$ if $\mathcal M$ is invariant under $\varphi$ and given by the graph of a Lipschitz continuous map $h^{cu}\left(\cdot,\omega\right):X_{0cu}\to X_{0s}$ which is $C^k$ in $\xi$, $k\geq1$, i.e.,
	\begin{equation*}
		\mathcal M\left(\omega\right)=\{\xi+h^{cu}\left(\xi,\omega\right):\xi\in X_{0cu}\}.
	\end{equation*}
We denote $\mathcal M\left(\omega\right)$ by $\mathcal M^{cu}\left(\omega\right)$. Accordingly, a $C^k$ random center(resp. center-stable) invariant manifold $\mathcal M^c\left(\omega\right)$(resp. $\mathcal M^{cs}\left(\omega\right)$) is given by the graph of a Lipschitz map $h^i\left(\cdot,\omega\right):X_{0i}\to X_{0j}$, $i=c$(resp. $cs$) and $j=su$(resp. $j=u$).
\end{definition}
We extract some results from \cite{LZ2021} about integrated solutions and invariant manifolds of \eqref{eq4}.

Set
	\begin{equation*}
	\left(S\diamond{G_x\left(v\right)}\right)\left(t\right)=\lim_{\lambda\to+\infty}\int_0^t\phi_{A_0}\left(t,s\right)\lambda{\left(\lambda{I}-A\right)}^{-1}G\left(\theta_s\omega,v\left(s,\omega,x\right)\right)ds.
\end{equation*}
We have the following result:
\begin{proposition}\cite[Proposition 2.1]{LZ2021}\label{pro3.2}
Under Assumption \ref{as2.2}, We have
\begin{equation}\label{eq8}
\Pi_{0s}\left(S\diamond{G_x\left(v\right)}\right)\left(t\right)=\lim_{\lambda\to+\infty}\int_0^t\phi_{A_{0s}}\left(t,s\right)\lambda{\left(\lambda{I}-A_s\right)}^{-1}\Pi_sG\left(\theta_s\omega,v\left(s,\omega,x\right)\right)ds,	
\end{equation}
while for each $k\in\{c,u\}$, 
\begin{equation}\label{eq9}
\Pi_{0k}\left(S\diamond{G_x\left(v\right)}\right)\left(t\right)=\int_0^t\phi_{A_{0k}}\left(t,s\right)\Pi_k G\left(\theta_s\omega,v\left(s,\omega,x\right)\right)ds.		
\end{equation}
Moreover, for each $\zeta>-\beta$, there exists $C_\zeta>0$, such that for $\forall{t\geq0}$, we obtain
\begin{equation}\label{eq10}
	\Vert\Pi_{0s}\left(S\diamond{G_x\left(v\right)}\right)\left(t\right)\Vert\leq{C_\zeta}\underset{s\in[0,t]}{\sup}e^{\zeta\left(t-s\right)}e^{\int_s^t{z\left(\theta_r\omega\right)dr}}\left\Vert{G\left(\theta_s\omega,v\left(s,\omega,x\right)\right)}\right\Vert.
\end{equation}
\end{proposition}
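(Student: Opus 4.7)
The plan is threefold: first push the projections $\Pi_{0k}$ through the Stieltjes limit using the commutation properties from Lemma \ref{lem2.11}; second, exploit the finite dimensionality of $X_{0c}$ and $X_{0u}$ to dispose of the $\lambda\to+\infty$ limit on the central and unstable components; third, realize the stable part as a Stieltjes convolution associated with the restricted operator $A_s$, after factoring out the Ornstein--Uhlenbeck term, and apply Lemma \ref{lem2.8} to obtain the norm bound.

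For the first step, I would use Lemma \ref{lem2.11} to note that for each $p\in\{c,u,s\}$ the extension $\Pi_p$ satisfies $\Pi_p(\lambda I-A)^{-1}=(\lambda I-A)^{-1}\Pi_p$ for $\lambda>\vartheta$ and maps $X$ into $X_0$. Combined with the commutation $\Pi_{0p}T(t)=T(t)\Pi_{0p}$ on $X_0$ from Assumption \ref{as2.2}, this lets me carry $\Pi_{0p}$ inside the integral and inside the $\lambda$-limit, producing
\[
\Pi_{0p}(S\diamond G_x(v))(t)=\lim_{\lambda\to+\infty}\int_0^t\phi_{A_{0p}}(t,s)\lambda(\lambda I - A_p)^{-1}\Pi_p G(\theta_s\omega,v(s,\omega,x))ds.
\]
This is already \eqref{eq8} for $p=s$. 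For $p\in\{c,u\}$, the finite-rank hypothesis on $\Pi_{0c},\Pi_{0u}$ forces $X_p$ to be finite dimensional and $A_p$ to be bounded, so $\lambda(\lambda I-A_p)^{-1}\Pi_p g\to\Pi_p g$ pointwise. Using the uniform Hille--Yosida bound on $\lambda(\lambda I-A)^{-1}$ implicit in Lemma \ref{lemma3.1} together with the continuity of $\phi_{A_{0p}}$ on the compact interval $[0,t]$, dominated convergence justifies exchanging the limit with the integral, yielding \eqref{eq9}.

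For the estimate \eqref{eq10}, I would decouple the Ornstein--Uhlenbeck factor by writing $e^{\int_s^t z(\theta_r\omega)dr}=e^{\int_0^t z(\theta_r\omega)dr}\cdot e^{-\int_0^s z(\theta_r\omega)dr}$, which gives
\[
\Pi_{0s}(S\diamond G_x(v))(t)=e^{\int_0^t z(\theta_r\omega)dr}(S_s\diamond\tilde g)(t),
\]
where $S_s$ is the integrated semigroup generated by $A_s$ on $X_s$ and $\tilde g(s)=e^{-\int_0^s z(\theta_r\omega)dr}\Pi_s G(\theta_s\omega,v(s,\omega,x))$. The restricted operator $A_s$ inherits the MR property with improved growth bound $-\beta$ thanks to \eqref{eq7}, so Lemma \ref{lem2.8} applied to $A_s$ with $\kappa=\zeta>-\beta$ furnishes a constant $C_\zeta>0$ with $\|(S_s\diamond\tilde g)(t)\|\leq C_\zeta\sup_{s\in[0,t]}e^{\zeta(t-s)}\|\tilde g(s)\|$. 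Substituting back the exponential factors and absorbing $\|\Pi_s\|$ into $C_\zeta$ produces \eqref{eq10}.

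The main obstacle will be verifying that the restricted operator $A_s$ genuinely inherits the MR structure needed to invoke Lemma \ref{lem2.8} with the stable exponent $-\beta$: one must confirm both that the Hille--Yosida resolvent estimate transfers to $X_s$ with the sharper constant dictated by \eqref{eq7} and that the auxiliary map $\delta$ in the MR definition survives restriction by $\Pi_s$. Once this structural step is in place, the remaining work is a bookkeeping calculation with the exponential factors, and the interchange of limits with the bounded projection is routine.
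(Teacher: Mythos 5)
The paper does not reproduce the proof of this proposition (it cites \cite{LZ2021}), so there is no in-paper argument to compare against; what follows assesses your plan on its own merits and against the Magal--Ruan framework that \cite{LZ2021} and \cite{PM_2009} rely on. Your three steps are sound and in the right order. Lemma \ref{lem2.11}(ii)--(iii) together with $\Pi_{0p}T(t)=T(t)\Pi_{0p}$ from Assumption \ref{as2.2} indeed let you commute $\Pi_{0p}$ through both the integral and the $\lambda$-limit, and since $(\lambda I-A)^{-1}$ commutes with $\Pi_p$ and maps into $X_0$, the resulting resolvent is precisely $(\lambda I-A_p)^{-1}\Pi_p$; this gives \eqref{eq8} directly. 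For $k\in\{c,u\}$, the finite-rank hypothesis makes $X_k$ finite dimensional and $A_k$ bounded, so $\lambda(\lambda I-A_k)^{-1}\to I$ in operator norm; you invoke dominated convergence, but the convergence is in fact uniform on $[0,t]$, so either way \eqref{eq9} follows. For \eqref{eq10}, your decoupling of the Ornstein--Uhlenbeck factor as $e^{\int_s^t z(\theta_r\omega)dr}=e^{\int_0^t z(\theta_r\omega)dr}e^{-\int_0^s z(\theta_r\omega)dr}$ is exactly what is needed to reduce the estimate to Lemma \ref{lem2.8} applied to $A_s$ with Hille--Yosida constant $-\beta$ inherited from \eqref{eq7}, and the formula for $C_\kappa$ in Lemma \ref{lem2.8} then produces $C_\zeta$ for any $\zeta>-\beta$.

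You are right to flag the verification that $A_s$ is itself an MR operator as the one non-trivial step, but it is not a genuine gap: $\Pi_s$ commutes with the resolvent and with $T(t)$, hence with $S(t)$ by formula \eqref{eq2.2}, so for $f\in C^1([0,\tau_0];X_s)$ the MR bound restricts with $\delta$ replaced by $\|\Pi_{0s}\|\delta$, which still tends to $0$; and \eqref{eq7} yields the Hille--Yosida estimate for $(A_s)_0$ on $X_{0s}$ with constant $-\beta$ in place of $\vartheta$. Writing out that paragraph, together with absorbing $\|\Pi_s\|$ into $C_\zeta$ at the end, completes the argument. This is essentially the approach one finds in \cite[Propositions 3.5--3.6]{PM_2009}.
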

\begin{lemma}\cite[Lemma 2.5, Theorem 3.1]{LZ2021}\label{lem3.3}
	The following assertions are valid:
	\begin{itemize}
		\item [$\left(\mathrm{i}\right)$]Under Assumption \ref{as2.1}, \eqref{eq4} possesses a unique integrated solution on $X_0$ given by 
		\begin{equation*}
			v\left(t\right)=\phi_{A_0}\left(t,0\right)x+\left(S\diamond{G_x\left(v\right)}\right)\left(t\right),
		\end{equation*}
		which is the so-called variation of constants fomula and generates a random dynamical system $\Phi$. 	Moreover, 
\begin{equation*}
	\left(t,\omega,x\right)\to{\Xi}^{-1}\left(\theta_t\omega,\Phi\left(t,\omega,\Xi\left(\omega,x\right)\right)\right)=:\hat{\Phi}\left(t,\omega,x\right),
\end{equation*}	
is  a random dynamical system. Also, this process is a solution of \eqref{eq1} for $x\in X_0$.

		\item [$\left(\mathrm{ii}\right)$]Under Assumptions \ref{as2.1}-\ref{as2.2}, if $\gamma<\eta<\min\left\{\alpha,\beta\right\}$ and $-\beta<\zeta<-\eta$ with
\begin{equation*}
	KL\left(C_\zeta+\frac1{\eta-\gamma}+\frac1{\alpha-\eta}\right)<1,
\end{equation*}
then there exists a center invariant manifold for \eqref{eq4}, given by
\begin{equation*}
	\mathcal{M}^c\left(\omega\right)=\{\xi+h^c\left(\xi,\omega\right):\xi\in{X_{0c}}\},
\end{equation*} 
where $h^c\left(\cdot,\omega\right):X_{0c}\to{X_{0us}}$ is a Lipschitz continuous map with $h^c\left(0,\omega\right)=0$ and 
\begin{equation*}
	Lip~h^c\left(\cdot,\omega\right)\leq\frac{K\left(KL+C_\zeta{L}\left(\alpha-\eta\right)\right)}{\left(\alpha-\eta\right)\left(1-KL\left(C_\zeta+\frac1{\eta-\gamma}+\frac1{\alpha-\eta}\right)\right)}.
	\end{equation*}
	\end{itemize}
\end{lemma}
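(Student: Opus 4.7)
For part (i), I set up the integrated-solution formula as a fixed point of the operator
\begin{equation*}
\mathcal{T}(v)(t)=\phi_{A_0}(t,0)x+(S\diamond G_x(v))(t)
\end{equation*}
on $C([0,\tau];X_0)$. Combining Lemma \ref{lem2.8} with the global Lipschitz bound on $G$ and the local boundedness of $t\mapsto\int_0^t z(\theta_r\omega)\,dr$, I obtain
\begin{equation*}
\sup_{t\in[0,\tau]}\Vert\mathcal{T}(v_1)(t)-\mathcal{T}(v_2)(t)\Vert\le C_\kappa L e^{\kappa\tau}\sup_{s\in[0,\tau]}e^{\int_0^s z(\theta_r\omega)dr}\sup_{t\in[0,\tau]}\Vert v_1(t)-v_2(t)\Vert,
\end{equation*}
which, using the $\varepsilon$-dependence of $C_\kappa$ in Lemma \ref{lem2.8}, becomes a strict contraction for $\tau$ small. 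Iterating gives a unique global integrated solution $\Phi(\cdot,\omega,x)$, and the stationarity identity $z(\theta_{t+s}\omega)=z(\theta_t(\theta_s\omega))$ together with uniqueness forces the cocycle property; measurability is standard for fixed points constructed by Picard iteration. The RDS $\hat\Phi$ for \eqref{eq1} is then obtained by conjugation with the coordinate transform $\Xi$.

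For part (ii), I apply the Lyapunov-Perron method. For fixed $\omega$ define the Banach space
\begin{equation*}
C_\eta(\omega)=\left\{v\in C(\mathbb{R};X_0):\Vert v\Vert_\eta:=\sup_{t\in\mathbb{R}}e^{-\eta|t|-\int_0^t z(\theta_r\omega)dr}\Vert v(t)\Vert<\infty\right\}
\end{equation*}
and, for each $\xi\in X_{0c}$, the Lyapunov-Perron operator
\begin{align*}
(\mathcal{J}_\xi v)(t) &= \phi_{A_{0c}}(t,0)\xi+\int_0^t\phi_{A_{0c}}(t,s)\Pi_c G(\theta_s\omega,v(s))\,ds \\
&\quad -\int_t^{+\infty}\phi_{A_{0u}}(t,s)\Pi_u G(\theta_s\omega,v(s))\,ds \\
&\quad +\lim_{\lambda\to+\infty}\int_{-\infty}^t\phi_{A_{0s}}(t,s)\lambda(\lambda I-A_s)^{-1}\Pi_s G(\theta_s\omega,v(s))\,ds.
\end{align*}
Using \eqref{eq5}--\eqref{eq7} on the center and unstable integrals and \eqref{eq8}--\eqref{eq10} on the stable one, the three contributions yield Lipschitz factors $K/(\eta-\gamma)$, $K/(\alpha-\eta)$ and $KC_\zeta$ (the last requires $\zeta<-\eta$ so that $e^{(\zeta+\eta)(t-s)}$ is integrable over $(-\infty,t]$). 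Hence $\mathcal{J}_\xi$ is a contraction on $C_\eta(\omega)$ with constant $KL(C_\zeta+\tfrac{1}{\eta-\gamma}+\tfrac{1}{\alpha-\eta})<1$, producing a unique fixed point $v_\xi(\cdot,\omega)$; $G(\omega,0)=0$ then forces $v_0\equiv 0$ by uniqueness.

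Set $h^c(\xi,\omega)=\Pi_{0us}v_\xi(0,\omega)$. The Lipschitz estimate comes from bounding $\Vert v_{\xi_1}-v_{\xi_2}\Vert_\eta$ by $K\Vert\xi_1-\xi_2\Vert$ divided by the contraction defect, and then estimating $\Pi_{0us}v_\xi(0)$ using only the unstable and stable integrals (the center term drops in the $\Pi_{0us}$-projection), which produces the displayed bound. Invariance of $\mathcal{M}^c$ under $\Phi$ is the standard shift argument: for every $r$, the shifted path $v_\xi(\cdot+r,\omega)$ is the Lyapunov-Perron fixed point over $\theta_r\omega$ with center datum $\Pi_{0c}\Phi(r,\omega,\xi+h^c(\xi,\omega))$, so $\Phi(r,\omega,\xi+h^c(\xi,\omega))\in\mathcal{M}^c(\theta_r\omega)$. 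The main obstacle will be the stable-direction integral on $(-\infty,t]$: since $A$ is only Hille-Yosida (not a $C_0$-generator) on $X$, the convolution is defined only through the integrated-semigroup identity \eqref{eq8}, and its weighted estimate hinges on the nontrivial constant $C_\zeta$ of Lemma \ref{lem2.8}. Verifying absolute convergence of this improper Stieltjes-type convolution for functions in $C_\eta(\omega)$ under the strict inequality $\zeta<-\eta$, and tracking $C_\zeta$ cleanly through both the contraction estimate and the Lipschitz bound on $h^c$, is the delicate step that separates this argument from the classical densely defined Lyapunov-Perron construction.
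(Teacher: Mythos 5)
Your plan is correct and follows essentially the same Lyapunov-Perron strategy that the paper itself uses for the closely analogous Theorems~\ref{thm3.4} and~\ref{thm4.2} (and that the cited source \cite{LZ2021} uses for this very statement): local contraction and iteration for the integrated-solution cocycle in part~(i), and for part~(ii) a weighted space on $\mathbb{R}$, a fixed-point operator built from the three spectral projections, the estimate of Proposition~\ref{pro3.2}/Lemma~\ref{lem2.8} for the non-densely-defined stable direction, and the shift/cocycle argument for invariance. Your bookkeeping of the contraction constant $KL\left(C_\zeta+\frac{1}{\eta-\gamma}+\frac{1}{\alpha-\eta}\right)$ and of the Lipschitz bound for $h^c$ (combining the $\mathcal{C}_\eta$-Lipschitz estimate for the fixed point with the bounds $\frac{KL}{\alpha-\eta}$ and $LC_\zeta$ for the unstable and stable contributions at $t=0$) reproduces the stated constants exactly, including the role of $\zeta<-\eta$ in making $\sup_{s\le 0}e^{-(\zeta+\eta)s}=1$; no genuine gap.
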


For $\eta_{cu}\in\left(-\beta,-\gamma\right)$, define the Banach space
\begin{equation*}
	\mathcal{C}_{\eta_{cu}}\left((-\infty,0];X_0\right)=\left\{f\in{C\left((-\infty,0],X_0\right)}:\underset{t\leq0}{\sup}~e^{-\eta_{cu} t-\int_0^tz\left(\theta_s\omega\right)ds}{\left\Vert{f\left(t\right)}\right\Vert}<\infty\right\},
\end{equation*} 
equipped with the norm
\begin{equation*}
	\Vert{f\left(t\right)}\Vert_{\mathcal{C}_{\eta_{cu}}}=\underset{t\leq0}{\sup}~e^{-\eta_{cu} t-\int_0^tz\left(\theta_s\omega\right)ds}{\Vert{f\left(t\right)}\Vert}.
\end{equation*}
And for $\eta_{cs}\in\left(\gamma,\min\{\alpha,\beta\}\right)$, define the Banach space
\begin{equation*}
	\mathcal{C}_{\eta_{cs}}\left([0,+\infty);X_0\right)=\left\{f\in{C\left([0,+\infty),X_0\right)}:\underset{t\geq0}{\sup}~e^{-\eta_{cs} t-\int_0^tz\left(\theta_s\omega\right)ds}{\left\Vert{f\left(t\right)}\right\Vert}<\infty\right\},
\end{equation*} 
equipped with the norm
\begin{equation*}
	\Vert{f\left(t\right)}\Vert_{\mathcal{C}_{\eta_{cs}}}=\underset{t\geq0}{\sup}~e^{-\eta_{cs}t-\int_0^tz\left(\theta_s\omega\right)ds}{\Vert{f\left(t\right)}\Vert}.
\end{equation*}
Define
\begin{equation*}
	\mathcal{M}^{cu}\left(\omega\right)=\{x\in X_0:v\left(t,\omega,x\right)\in\mathcal{C}_{\eta_{cu}}\left((-\infty,0];X_0\right)\}.
\end{equation*}
\begin{equation*}
	\mathcal{M}^{cs}\left(\omega\right)=\{x\in X_0:v\left(t,\omega,x\right)\in\mathcal{C}_{\eta_{cs}}\left([0,+\infty);X_0\right)\}.
\end{equation*}
where $v\left(t,\omega,x\right)$ is the solution of \eqref{eq4} with initial data $v\left(0\right)=x$. Let Assumptions \ref{as2.1}-\ref{as2.2} be satisfied from now on.
\begin{theorem}\label{thm3.4}
	For $-\beta<\zeta<\eta_{cu}<-\gamma$ with
\begin{equation}\label{eq11}
	{KL\left(C_\zeta-\frac1{\gamma+\eta_{cu}}-\frac1{\eta_{cu}-\alpha}\right)}<1,
\end{equation}
there exists a Lipschitz invariant center-unstable manifold for \eqref{eq4} which is given by 
\begin{equation*}
	\mathcal{M}^{cu}\left(\omega\right)=\{\xi+h^{cu}\left(\xi,\omega\right):\xi\in X_{0cu}\},
\end{equation*}
where $h^{cu}\left(\cdot,\omega\right):X_{0cu}\to X_{0s}$ is Lipschitz continuous with $h^{cu}\left(0,\omega\right)=0$ and 
\begin{equation*}
	Lip~h^{cu}\left(\cdot,\omega\right)\leq K_u=\frac{KLC_\zeta}{1-KL\left(C_\zeta-\frac1{\gamma+\eta_{cu}}-\frac1{\eta_{cu}-\alpha}\right)}.
\end{equation*}

\end{theorem}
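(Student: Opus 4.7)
The plan is to realise $\mathcal{M}^{cu}(\omega)$ as the graph of a map via a Lyapunov--Perron fixed point. An initial value $x$ should lie in $\mathcal{M}^{cu}(\omega)$ iff the integrated solution of \eqref{eq4} through $x$ admits a backward extension in the weighted space $\mathcal{C}_{\eta_{cu}}((-\infty,0];X_0)$. Projecting the variation of constants formula from Lemma~\ref{lem3.3}(i) onto the three invariant subspaces---propagating the $\Pi_{cu}$-components backward using the isomorphism granted by Assumption~\ref{as2.2}(ii), and closing the stable component by integrating from $-\infty$ (the only choice consistent with the growth bound, which is why $\eta_{cu}>-\beta$ is imposed)---I obtain, for $\xi\in X_{0cu}$ and $t\le0$, the fixed-point equation
\begin{equation*}
\mathcal{J}_\xi v(t) = \phi_{A_0}(t,0)\xi + \int_0^t \phi_{A_{0cu}}(t,s)\Pi_{cu}G(\theta_s\omega, v(s))\,ds + \int_{-\infty}^t \phi_{A_{0s}}(t,s)\Pi_s G(\theta_s\omega, v(s))\,ds,
\end{equation*}
whose solution $v(\cdot,\omega,\xi)$ is meant to trace out a point of the manifold.

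The next step is to show that $\mathcal{J}_\xi$ is a contraction on $\mathcal{C}_{\eta_{cu}}((-\infty,0];X_0)$. The inhomogeneous term is bounded in the weighted norm by $K\|\xi\|$ because the trichotomy bounds \eqref{eq5}--\eqref{eq6} on $X_{0c}$ and $X_{0u}$, combined with $\eta_{cu}\in(-\beta,-\gamma)$, make the two exponents $e^{-(\gamma+\eta_{cu})t}$ and $e^{(\alpha-\eta_{cu})t}$ bounded by one for $t\le0$. For the central and unstable convolutions, the Lipschitz property of $G$ and the elementary estimates $\int_t^0 e^{(\gamma+\eta_{cu})(s-t)}\,ds\le -1/(\gamma+\eta_{cu})$ and $\int_t^0 e^{(\alpha-\eta_{cu})(t-s)}\,ds\le -1/(\eta_{cu}-\alpha)$ produce the Lipschitz contributions $-KL/(\gamma+\eta_{cu})$ and $-KL/(\eta_{cu}-\alpha)$, respectively. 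For the stable convolution, I would use the analogue of \eqref{eq10} extended to the half-line $(-\infty,t]$, obtained from Lemma~\ref{lem2.8} on $X_{0s}$ together with the exponential decay $e^{-\beta(t-s)}$ (which dominates the weight as $s\to-\infty$ because $\zeta<\eta_{cu}$); this furnishes the contribution $KLC_\zeta$. The total Lipschitz constant is exactly $KL(C_\zeta-1/(\gamma+\eta_{cu})-1/(\eta_{cu}-\alpha))$, which is strictly less than one by \eqref{eq11}, so Banach's theorem yields a unique fixed point depending Lipschitzly on $\xi$.

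I would then define $h^{cu}(\xi,\omega):=\Pi_{0s}v(0,\omega,\xi)$. Since $F(0)=0$ forces $G(\omega,0)=0$, the zero function is the fixed point at $\xi=0$, giving $h^{cu}(0,\omega)=0$. The constant $K_u$ follows by estimating $\|h^{cu}(\xi_1,\omega)-h^{cu}(\xi_2,\omega)\|$ through the stable convolution at $t=0$ (contributing the factor $C_\zeta L$) applied to $v_1-v_2$, and then bounding $\|v_1-v_2\|_{\mathcal{C}_{\eta_{cu}}}$ by $K\|\xi_1-\xi_2\|/(1-KL(C_\zeta-1/(\gamma+\eta_{cu})-1/(\eta_{cu}-\alpha)))$ via a second application of the contraction estimate. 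Invariance of the graph under the random dynamical system $\Phi$ follows in the standard way from the cocycle property, since backward extendability in $\mathcal{C}_{\eta_{cu}}$ is preserved by positive time shifts.

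The principal technical obstacle is the rigorous derivation of the stable Stieltjes-type convolution estimate on the half-line $(-\infty,t]$. Proposition~\ref{pro3.2} and Lemma~\ref{lem2.8} are formulated only on compact intervals starting at zero; transferring them to $(-\infty,t]$ demands the Hille--Yosida approximation $\lambda(\lambda I-A_s)^{-1}$, a dominated-convergence argument $\int_{-N}^t\to\int_{-\infty}^t$, and the sublinear growth $|z(\theta_r\omega)|/|r|\to0$ from Lemma~\ref{lemma2.1}(iii), all of which must cooperate to ensure that the tail is negligible in the weighted supremum norm. It is precisely this interplay that forces the two-sided gap $-\beta<\zeta<\eta_{cu}<-\gamma$ rather than a single-sided spectral condition, and that makes the constant $C_\zeta$ appear explicitly in \eqref{eq11}.
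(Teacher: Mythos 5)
Your proposal follows the paper's proof essentially verbatim: the same Lyapunov--Perron fixed-point equation obtained by projecting the variation-of-constants formula and passing to the limit $l\to-\infty$ in the stable component, the same weighted-norm contraction estimate producing the constant $KL(C_\zeta-\frac1{\gamma+\eta_{cu}}-\frac1{\eta_{cu}-\alpha})$, the same definition $h^{cu}(\xi,\omega)=\Pi_{0s}\bar v(0,\omega,\xi)$, and the same invariance argument via the cocycle property. The technical point you flag at the end --- extending the Stieltjes-convolution bound of Proposition~\ref{pro3.2} to $(-\infty,t]$ via the Hille--Yosida approximation and a change of variable under the limit $r\to-\infty$ --- is exactly how the paper handles $\Delta\mathcal J_s$ in Step~2.
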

\begin{proof}We proceed it in three steps.

\noindent {\bf Step 1.} 
	We prove that $x\in{\mathcal{M}^{cu}\left(\omega\right)}$ if and only if there exists a function $v\left(\cdot,\omega,x\right)\in{\mathcal C_{\eta_{cu}}\left((-\infty,0];X_0\right)}$ with $v\left(0\right)=x$ which satisfies
\begin{equation}\label{eq12}
\begin{split}
v\left(t,\omega,x\right)=&\phi_{A_{0c}}\left(t,0\right)\xi+\int_0^t\phi_{A_{0c}}\left(t,s\right)\Pi_cG\left(\theta_s\omega,v\left(s\right)\right)ds\\&+\int_0^t\phi_{A_{0u}}\left(t,s\right)\Pi_uG\left(\theta_s\omega,v\left(s\right)\right)ds\\&+\lim_{\lambda\to+\infty}\int_{-\infty}^t\phi_{A_{0s}}\left(t,s\right)\lambda\left(\lambda{I}-A_s\right)^{-1}\Pi_sG\left(\theta_s\omega,v\left(s\right)\right)ds, 
\end{split}	
\end{equation}
where $\xi=\Pi_{0cu}x$.

To this aim, we let $x\in{\mathcal{M}^{cu}\left(\omega\right)}$ and assume that $v\in{\mathcal C_{\eta_{cu}}\left((-\infty,0];X_0\right)}$ is a solution of \eqref{eq4}. 
By the variation of constants formula from Lemma \ref{lem3.3}(i), one gets
\begin{equation}\label{eq13}
\begin{split}
	\Pi_{0cu}v\left(t,\omega,x\right)=&\phi_{A_{0c}}\left(t,0\right)\xi+\int_0^t\phi_{A_{0c}}\left(t,s\right)\Pi_c  G\left(\theta_s\omega,v\left(s\right)\right)ds\\&+\int_0^t\phi_{A_{0u}}\left(t,s\right)\Pi_u G\left(\theta_s\omega,v\left(s\right)\right)ds.
	\end{split}
\end{equation}
Moreover, for all $t,l\in\mathbb{R}$ with $t\geq{l}$,
\begin{equation}\label{eq14}
\begin{split}
	\Pi_{0s}v\left(t\right)&=\phi_{A_{0s}}(t,l)\Pi_{0s}v(l)+\Pi_{0s}\left(S\diamond{G}_x\left(v\left(l+\cdot\right)\right)\right)(t-l)\\&=\phi_{A_{0s}}\left(t,l\right)\Pi_{0s}v\left(l\right)+\lim_{\lambda\to+\infty}\int_l^t\phi_{A_{0s}}\left(t,s\right)\lambda\left(\lambda{I}-A_s\right)^{-1}\\&\quad\times\Pi_sG\left(\theta_s\omega,v\left(s\right)\right)ds.
\end{split}
\end{equation}
According to \eqref{eq7}, for $l\leq0$,
\begin{equation*}
	\begin{split}
	\left\Vert\phi_{A_{0s}}\left(t,l\right)\Pi_{0s}v\left(l\right)\right\Vert\leq&{K}e^{-\beta\left(t-l\right)+\int_l^tz\left(\theta_r\omega\right)dr}\left\Vert{v\left(l\right)}\right\Vert\\\leq&{K}e^{-\beta\left(t-l\right)-\eta_{cu}{l}+\int_0^t{z\left(\theta_r\omega\right)dr}}e^{\eta_{cu}{l}-\int_0^l{z\left(\theta_r\omega\right)dr}}\left\Vert{v\left(l\right)}\right\Vert\\\leq&{K}e^{\left(\beta+\eta_{cu}\right)l-\beta{t}+\int_0^t{z\left(\theta_r\omega\right)dr}}{\left\Vert{v}\right\Vert}_{\mathcal C_{\eta_{cu}}}.	
	\end{split}
\end{equation*}
Let $l\to-\infty$, since $\beta+\eta_{cu}>0$, we have
\begin{equation}\label{eq15}
	\Pi_{0s}v\left(t,\omega,x\right)=\lim_{\lambda\to+\infty}\int_{-\infty}^t\phi_{A_{0s}}\left(t,s\right)\lambda\left(\lambda{I}-A_s\right)^{-1}\Pi_s{G\left(\theta_s\omega,v\left(s\right)\right)}ds.
\end{equation}
By letting $l\to -\infty$ and combining with  \eqref{eq13}, we get \eqref{eq12}. The converse can be obtained by a straight computation.

\noindent {\bf Step 2.}
We claim that for every $\xi\in X_{0cu}$, \eqref{eq12} has a unique solution in $\mathcal{C}_{\eta_{cu}}\left((-\infty,0];X_0\right)$. 

To show this claim, we denote $\mathcal{J}\left(v,\omega,\xi\right)$ the right side of \eqref{eq12}, i.e.,
\begin{equation}\label{eq16}
\begin{split}
	\mathcal{J}\left(v,\omega,\xi\right)=&\phi_{A_{0c}}\left(t,0\right)\xi+\int_0^t\phi_{A_{0c}}\left(t,s\right)\Pi_c G\left(\theta_s\omega,v\left(s\right)\right)ds\\&+\int_0^t\phi_{A_{0u}}\left(t,s\right)\Pi_u G\left(\theta_s\omega,v\left(s\right)\right)ds\\&+\lim_{\lambda\to+\infty}\int_{-\infty}^t\phi_{A_{0s}}\left(t,s\right)\lambda\left(\lambda{I}-A_s\right)^{-1}\Pi_sG\left(\theta_s\omega,v\left(s\right)\right)ds, \end{split}
\end{equation}

It needs to show that $\mathcal{J}$ maps from $\mathcal{C}_{\eta_{cu}}\left((-\infty,0];X_0\right)\times{X_{0cu}}$ to $\mathcal{C}_{\eta_{cu}}\left((-\infty,0];X_0\right)$ and it is a uniform contraction. For $v,\tilde{v}\in{\mathcal{C}_{\eta_{cu}}\left((-\infty,0];X_0\right)}$ and $t\leq0$,
\begin{equation*}
\begin{split}
	&{\Big \Vert{\mathcal{J}\left(v,\omega,\xi\right)-\mathcal{J}\left(\tilde{v},\omega,\xi\right)}\Big\Vert}_{\mathcal{C}_{\eta_{cu}}}=\underset{t\leq0}{\sup}~e^{-\eta_{cu}t-\int_0^tz\left(\theta_s\omega\right)ds}{\Big\Vert{\mathcal{J}\left(v,\omega,\xi\right)-\mathcal{J}\left(\tilde{v},\omega,\xi\right)}\Big\Vert}\\&\leq KL\sup_{t\leq0}\int_t^0e^{\left(\gamma+\eta_{cu}\right)\left(s-t\right)-\eta_{cu}s-\int_0^sz\left(\theta_r\omega\right)dr}\left\Vert v-\tilde v\right\Vert ds\\&+KL\sup_{t\leq0}\int_t^0e^{\left(\eta_{cu}-\alpha\right)\left(s-t\right)-\eta_{cu}s-\int_0^sz\left(\theta_r\omega\right)dr}\left\Vert v-\tilde v\right\Vert ds+\underset{t\leq0}{\sup}~e^{-\eta_{cu}t-\int_0^tz\left(\theta_s\omega\right)ds}\Delta\mathcal J_s\\&\leq KL\left(-\frac{1}{\gamma+\eta_{cu}}-\frac{1}{\eta_{cu}-\alpha}\right)\left\Vert v-\tilde v\right\Vert_{\mathcal C_{\eta_{cu}}}+\underset{t\leq0}{\sup}~e^{-\eta_{cu}t-\int_0^tz\left(\theta_s\omega\right)ds}\Delta\mathcal J_s.
	\end{split}
\end{equation*}
where
\begin{equation*}
\begin{split}
\Delta\mathcal J_s=\left\Vert\lim_{\lambda\to+\infty}\int_{-\infty}^t\phi_{A_{0s}}\left(t,s\right)\lambda\left(\lambda{I}-A_s\right)^{-1}\Pi_s\Big(G\left(\theta_s\omega,v\left(s\right)\right)-G\left(\theta_s\omega,\tilde{v}\left(s\right)\right)\Big)ds\right\Vert.
\end{split}
\end{equation*}
%
%
According to Proposition \ref{pro3.2}, for $\zeta$ with $-\beta<\zeta<\eta_{cu}$, we have
\begin{equation*}
	\begin{split}
	\Delta\mathcal J_s&=\bigg\Vert\lim_{\lambda\to{+\infty}}\lim_{r\to{-\infty}}\int_r^t\phi_{A_{0s}}\left(t,s\right)\lambda\left(\lambda{I}-A_s\right)^{-1}\\&\times\Pi_s\Big(G\left(\theta_s\omega,v\left(s\right)\right)-G(\theta_s\omega,\tilde v\left(s\right))\Big)ds\bigg\Vert\\&\quad=\bigg\Vert\lim_{\lambda\to{+\infty}}\lim_{r\to{-\infty}}\int_0^{t-r}\phi_{A_{0s}}\left(t,l+r\right)\lambda\left(\lambda{I}-A_s\right)^{-1}\\
		&\qquad\times\Pi_s\Big(G\left(\theta_{l+r}\omega,v\left(l+r\right)\right)-G(\theta_{l+r}\omega,\tilde v\left(l+r\right))\Big)dl\bigg\Vert\\&\quad\leq\lim_{r\to{-\infty}}LC_\zeta\underset{l\in[0,t-r]}{\sup}{e^{\zeta\left(t-r-l\right)}}e^{\int_{l+r}^t{z\left(\theta_\tau\omega\right)}d\tau}\Vert{v\left(l+r\right)-\tilde v\left(l+r\right)}\Vert\\&\quad=LC_\zeta\underset{\sigma\in(-\infty,t]}{\sup}e^{\zeta\left(t-\sigma\right)}e^{\int_\sigma^t{z\left(\theta_\tau\omega\right)}d\tau}\Vert{v\left(\sigma\right)-\tilde v\left(\sigma\right)}\Vert,
	\end{split}
\end{equation*}
implying that
\begin{equation*}
\begin{split}
	\underset{t\leq0}{\sup}~e^{-\eta_{cu}t-\int_0^tz\left(\theta_s\omega\right)ds}\Delta\mathcal{J}_s&\leq{LC_\zeta}\underset{t\leq0}{\sup}~e^{-\eta_{cu}t}\underset{\sigma\in(-\infty,t]}{\sup}e^{\eta_{cu}\sigma+\zeta\left(t-\sigma\right)}\\&\quad{\times}e^{-\eta_{cu}\sigma-\int_0^\sigma{z\left(\theta_\tau\omega\right)}d\tau}\Vert{v\left(\sigma\right)-\tilde v\left(\sigma\right)}\Vert\\&\leq{LC_\zeta}\underset{t\leq0}{\sup}\underset{\sigma\in(-\infty,t]}{\sup}e^{\left(\eta_{cu}-\zeta\right)\sigma+\left(\zeta-\eta_{cu}\right)t}\Vert{v-\tilde v}\Vert_{\mathcal{C}_{\eta_{cu}}}\\&\leq LC_\zeta\Vert{v-\tilde v}\Vert_{\mathcal{C}_{\eta_{cu}}}.
\end{split}
\end{equation*}
By combining the above estimates, we have
\begin{equation*}
	{\left\Vert{\mathcal{J}\left(v,\omega,\xi\right)-\mathcal{J}\left(\tilde{v},\omega,\xi\right)}\right\Vert}_{\mathcal{C}_{\eta_{cu}}}\leq{KL\left(C_\zeta-\frac1{\gamma+\eta_{cu}}-\frac1{\eta_{cu}-\alpha}\right)}{\Vert{v-\tilde{v}}\Vert}_{\mathcal{C}_{\eta_{cu}}}.
\end{equation*}
By \eqref{eq11}, we obtain that $\mathcal{J}$ is a uniform contraction with respect to $v$. Furthermore, $\mathcal{J}\left(\cdot,\omega,\xi\right)$ is well defined from ${\mathcal C_{\eta_{cu}}\left((-\infty,0];X_0\right)}\times{X_{0cu}}$ to ${\mathcal C_{\eta_{cu}}\left((-\infty,0];X_0\right)}$ by setting $\tilde{v}=0$. From the  contraction mapping principle, for any given $\xi\in{X_{0cu}}$, $\mathcal{J}\left(\cdot,\omega,\xi\right)$ has a unique fixed point $\overline{v}\in{\mathcal{C}_{\eta_{cu}}\left((-\infty,0];X_0\right)}$. That is, for each $t\leq0$, $\xi\in X_{0cu}$, $\overline{v}\left(t,\omega,\xi\right)$ is a unique solution to \eqref{eq12} and 
\begin{equation*}
	\mathcal{J}\left(\overline{v},\omega,\xi\right)=\overline{v}\left(t,\omega,\xi\right).
\end{equation*} 
Also, for $\forall{\xi_1,\xi_2\in{X_{0cu
}}}$, 
\begin{equation*}	
\begin{split}
	&{\left\Vert{\mathcal{J}\left(\overline{v},\omega,\xi_1\right)-\mathcal{J}\left(\overline{v},\omega,\xi_2\right)}\right\Vert}_{\mathcal{C}_{\eta_{cu}}}\\&\quad={\left\Vert{\overline{v}\left(t,\omega,\xi_1\right)-\overline{v}\left(t,\omega,\xi_2\right)}\right\Vert}_{\mathcal{C}_{\eta_{cu}}}\\&\quad\leq{K}\left\Vert{\xi_1-\xi_2}\right\Vert+{KL\left(C_\zeta-\frac1{\gamma+\eta_{cu}}-\frac1{\eta_{cu}-\alpha}\right)}{\left\Vert{\overline{v}\left(t,\omega,\xi_1\right)-\overline{v}\left(t,\omega,\xi_2\right)}\right\Vert}_{\mathcal{C}_{\eta_{cu}}}.
	\end{split}
\end{equation*}  
Thus
\begin{equation}\label{eq17}
	{\left\Vert{\overline{v}\left(t,\omega,\xi_1\right)-\overline{v}\left(t,\omega,\xi_2\right)}\right\Vert}_{\mathcal{C}_{\eta_{cu}}}\leq\frac{K}{1-KL\left(C_\zeta-\frac1{\gamma+\eta_{cu}}-\frac1{\eta_{cu}-\alpha}\right)}\left\Vert{\xi_1-\xi_2}\right\Vert.
\end{equation}
So $\overline{v}\left(t,\omega,\xi\right)$ is Lipschitz continuous from $X_{0cu}$ to $\mathcal{C}_{\eta_{cu}}\left((-
\infty,0];X_0\right)$. Since $\overline{v}\left(\cdot,\omega,\xi\right)$ can be an $\omega$-wise limit of the iteration of a contraction mapping $\mathcal{J}$ starting at 0 and $\mathcal{J}$  maps a measurable function to a measurable function, $\overline{v}\left(\cdot,\omega,\xi\right)$ is measurable. By \cite[Lemma III.14]{Cas1977}, $\overline{v}$ is measurable with respect to $\left(\cdot,\omega,\xi\right)$.

\noindent {\bf Step 3.} We will prove that the center-unstable manifold is given by the graph of a Lipschitz continuous map and is invariant.

Let $h^{cu}\left(\xi,\omega\right)=\Pi_{0s}{\overline{v}\left(0,\omega,\xi\right)}$, then by \eqref{eq15},
\begin{equation*}
\begin{split}
	h^{cu}\left(\xi,\omega\right)=&\lim_{\lambda\to+\infty}\int_{-\infty}^0\phi_{A_{0s}}\left(0,s\right)\lambda{\left(\lambda{I}-A_s\right)}^{-1}\Pi_sG\left(\theta_s\omega,\overline{v}\left(s,\omega,\xi\right)\right)ds.
\end{split}	
\end{equation*}
Since $\overline{v}\left(0,\omega,x\right)=x$, we have $h^{cu}\left(0,\omega\right)=0$ and thus $h^{cu}$ is $\mathcal{F}$-measurable. Indeed, by the estimation of $\Delta\mathcal J_s$ and \eqref{eq17}, it follows that
\begin{equation*}
	\left\Vert{h^{cu}\left(\xi_1,\omega\right)-h^{cu}\left(\xi_2,\omega\right)}\right\Vert\leq\frac{KLC_\zeta}{1-KL\left(C_\zeta-\frac1{\gamma+\eta_{cu}}-\frac1{\eta_{cu}-\alpha}\right)}\left\Vert{\xi_1-\xi_2}\right\Vert.
\end{equation*}
From the definition of $h^{cu}\left(\xi,\omega\right)$ and the claim in {\bf Step 1}, it follows that $x\in{\mathcal{M}^{cu}\left(\omega\right)}$ if and only if there exists $\xi\in{X_{0cu}}$ such that $x=\xi+h^{cu}\left(\xi,\omega\right)$. Therefore, we have
\begin{equation*}
		\mathcal{M}^{cu}\left(\omega\right)=\{\xi+h^{cu}\left(\xi,\omega\right):\xi\in{X_{0cu}}\}.
\end{equation*}
By Definition \ref{def2.2}, we need to show that for each $x\in{\mathcal M^{cu}\left(\omega\right)}$, $v\left(r,\omega,x\right)\in{\mathcal M^{cu}\left(\omega\right)}$ for all $r\geq0$. By the cocycle property, we have $v\left(t+r,\omega,x\right)=v\left(t,\theta_r\omega,v\left(r,\omega,x\right)\right)\in{\mathcal{C}_{\eta_{cu}}\left((-\infty,0];X_0\right)}$.By the definition of $\mathcal M^{cu}\left(\omega\right)$, we have $v\left(r,\omega,x\right)\in{\mathcal M^{cu}\left(\theta_r\omega\right)}$. 

\end{proof}

\begin{theorem}\label{thm3.5}
Let $v\left(t,\omega,x\right)$ be a solution of \eqref{eq4} and $u\left(t,\omega,x\right)$ be the solution of \eqref{eq1}, then ${\mathcal M^{cu*}\left(\omega\right)}=\Xi^{-1}\left(\omega,\mathcal M^{cu}\left(\omega\right)\right)$ is a center-unstable invariant manifold of \eqref{eq1}. 	
\end{theorem}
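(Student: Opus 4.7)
The plan is to transport the manifold $\mathcal{M}^{cu}(\omega)$ already built for the transformed random equation \eqref{eq4} back to the original stochastic equation \eqref{eq1} via the inverse Ornstein--Uhlenbeck conjugacy, and to check separately the geometric form required by Definition \ref{def2.2} together with the manifold description from Section~\ref{sec3}, and the dynamical invariance under the cocycle $\hat\Phi$ associated with \eqref{eq1}.

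For the geometric form, the key observation is that at the initial time the map $\Xi^{-1}(\omega,\cdot)$ reduces to multiplication by the scalar $e^{z(\omega)}$. Since $X_{0cu}$ and $X_{0s}$ are linear subspaces of $X_0$, this scalar factor can be absorbed into a reparametrization of the graph: for every $\xi\in X_{0cu}$, the point $\xi+h^{cu}(\xi,\omega)$ of $\mathcal{M}^{cu}(\omega)$ is sent to $e^{z(\omega)}\xi+e^{z(\omega)}h^{cu}(\xi,\omega)$. Setting $\tilde{\xi}:=e^{z(\omega)}\xi$ and defining
\begin{equation*}
h^{cu*}(\tilde{\xi},\omega):=e^{z(\omega)}\,h^{cu}\bigl(e^{-z(\omega)}\tilde{\xi},\omega\bigr),
\end{equation*}
one reads off $\mathcal{M}^{cu*}(\omega)=\{\tilde{\xi}+h^{cu*}(\tilde{\xi},\omega):\tilde{\xi}\in X_{0cu}\}$, and the cancelling scalars $e^{\pm z(\omega)}$ ensure that $h^{cu*}(\cdot,\omega)$ inherits Lipschitz continuity from $h^{cu}$ with the same constant $K_u$ produced in Theorem~\ref{thm3.4}, while $h^{cu*}(0,\omega)=0$. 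Measurability in $\omega$ is inherited from that of $h^{cu}$ and of the scalar random variable $z$.

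For the invariance, I would invoke the explicit conjugacy $\hat{\Phi}(t,\omega,x)=\Xi^{-1}\bigl(\theta_t\omega,\Phi(t,\omega,\Xi(\omega,x))\bigr)$ from Lemma~\ref{lem3.3}(i). Given $x^{*}\in\mathcal{M}^{cu*}(\omega)$, set $x:=\Xi(\omega,x^{*})\in\mathcal{M}^{cu}(\omega)$. By the invariance $\Phi(t,\omega,x)\in\mathcal{M}^{cu}(\theta_t\omega)$ established in Step~3 of the proof of Theorem~\ref{thm3.4}, applying $\Xi^{-1}(\theta_t\omega,\cdot)$ yields
\begin{equation*}
\hat{\Phi}(t,\omega,x^{*})=\Xi^{-1}\bigl(\theta_t\omega,\Phi(t,\omega,x)\bigr)\in\Xi^{-1}\bigl(\theta_t\omega,\mathcal{M}^{cu}(\theta_t\omega)\bigr)=\mathcal{M}^{cu*}(\theta_t\omega),
\end{equation*}
which is precisely the invariance demanded by Definition~\ref{def2.2}.

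The hard part is essentially absent here: no sharp estimate, fixed-point argument, or spectral gap condition has to be invoked beyond what Theorem~\ref{thm3.4} already supplies. The only subtlety is bookkeeping with the $\omega$-dependence of $\Xi$: along a trajectory $\Xi$ involves $z(\theta_t\omega)$, but at the initial instant it collapses to the scalar $e^{z(\omega)}$, which commutes with the projections $\Pi_{0cu}$ and $\Pi_{0s}$ and therefore preserves both the graph decomposition and the Lipschitz bound of the transformed manifold.
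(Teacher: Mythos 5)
Your proposal is correct and follows essentially the same route as the paper: transport the manifold via $\Xi^{-1}$, read off the transformed graph map by the substitution $\tilde\xi=e^{z(\omega)}\xi$, and obtain invariance directly from the conjugacy $\hat\Phi(t,\omega,x)=\Xi^{-1}(\theta_t\omega,\Phi(t,\omega,\Xi(\omega,x)))$ together with the invariance of $\mathcal{M}^{cu}$ proved in Theorem~\ref{thm3.4}. Your version is in fact slightly cleaner in that you correctly evaluate the conjugacy at the initial instant as $e^{z(\omega)}$ rather than carrying the spurious $\theta_t\omega$ that appears in the paper's formula for $h^{cu*}$, and you explicitly record that the cancelling scalars preserve the Lipschitz constant $K_u$.
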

\begin{proof}
In fact, it follows from Lemma \ref{lem3.3}(i) that
\begin{equation*}
\begin{split}
	u\left(t,\omega,\mathcal M^{cu*}\left(\omega\right)\right)&=\Xi^{-1}\left(\theta_t\omega,v\left(t,\omega,\Xi\left(\omega,\mathcal M^{cu*}\left(\omega\right)\right)\right)\right)\\&=\Xi^{-1}\left(\theta_t\omega,v\left(t,\omega,\mathcal M^{cu}\left(\omega\right)\right)\right)\subset{\Xi}^{-1}\left(\theta_t\omega,\mathcal M^{cu}\left(\theta_t\omega\right)\right)\\&=\mathcal M^{cu*}\left(\theta_t\omega\right).
\end{split}
\end{equation*}	
So $\mathcal M^{cu*}\left({\omega}\right)$ is invariant. Moreover,
\begin{equation*}
	\begin{split}
		\mathcal M^{cu*}\left(\omega\right)&=\Xi^{-1}\left(\omega,\mathcal M^{cu}\left(\omega\right)\right)\\&=\left\{u_0=\Xi^{-1}\left(\omega,\xi+h^{cu}\left(\xi,\omega\right)\right):\xi\in{X_{0cu}}\right\}\\&=\left\{u_0=e^{z\left(\theta_t\omega\right)}\left(\xi+h^{cu}\left(\xi,\omega\right)\right):\xi\in{X_{0cu}}\right\}\\&=\left\{u_0=\xi+e^{z\left(\theta_t\omega\right)}h^{cu}\left(e^{-z\left(\theta_t\omega\right)}\xi,\omega\right):\xi\in{X_{0cu}}\right\}.
	\end{split}
\end{equation*}
 Therefore, $\mathcal M^{cu*}\left(\omega\right)$ is a Lipschitz center-unstable invariant manifold given by the graph of a Lipschitz continuous function $h^{cu*}\left(\xi,\omega\right)=e^{z\left(\theta_t\omega\right)}h^{cu}\left(e^{-z\left(\theta_t\omega\right)}\xi,\omega\right)$ over $X_{0cu}$.
\end{proof}

We prove that $\mathcal{M}^{cu}\left(\omega\right)$ is a $C^k$ invariant center-unstable manifold for \eqref{eq4} by induction.

\begin{theorem}\label{thm3.6}
Assume $F$ is $C^k$ in $u$. If $-\beta<\zeta<k\eta_{cu}<-\gamma$ with
\begin{equation}\label{eq18}
	KL\left(C_{\zeta+\left(i-1\right)\eta_{cu}}-\frac1{\gamma+i\eta_{cu}}-\frac1{i\eta_{cu}-\alpha}\right)<1, ~\text{for all} ~1\leq{i}\leq{k},
\end{equation}	 
then $\mathcal{M}^{cu}\left(\omega\right)$ is a $C^k$ center-unstable invariant manifold of \eqref{eq4}, i.e., $h^{cu}\left(\xi,\omega\right)$ is $C^k$ in $\xi$.
\end{theorem}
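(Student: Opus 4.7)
The plan is to prove by induction on $j=1,2,\dots,k$ that the fixed point $\overline{v}(\cdot,\omega,\xi)$ of the Lyapunov-Perron operator $\mathcal J$ from \eqref{eq16} is $j$-times continuously Fréchet differentiable as a map $X_{0cu}\to\mathcal C_{j\eta_{cu}}((-\infty,0];X_0)$. The base case $j=0$ is the Lipschitz bound \eqref{eq17} of Theorem \ref{thm3.4}. Once the induction is complete, $h^{cu}(\xi,\omega)=\Pi_{0s}\overline v(0,\omega,\xi)$ inherits $C^k$ smoothness in $\xi$, because point evaluation at $t=0$ is a bounded linear map $\mathcal C_{k\eta_{cu}}\to X_0$ (the defining weight equals $1$ at $t=0$) and $\Pi_{0s}$ is bounded on $X_0$.

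For the induction step, assume $\overline v$ is $C^{j-1}$ in the stated sense. Differentiating the fixed point equation \eqref{eq12} formally $j$ times in $\xi$, the candidate $j$-th derivative $V^{(j)}=D_\xi^j\overline v$ should satisfy
\begin{equation*}
V^{(j)}=\mathcal L^{(j)}(\xi)+\mathcal K V^{(j)}+\mathcal R^{(j)}\bigl(\overline v,D\overline v,\dots,D^{j-1}\overline v\bigr),
\end{equation*}
where $\mathcal L^{(j)}$ is the linear-in-$\xi$ contribution (nontrivial only for $j=1$), $\mathcal K$ is obtained by replacing $G(\theta_s\omega,\overline v(s))$ in the three integrals of \eqref{eq16} by $DG(\theta_s\omega,\overline v(s))(\cdot)$, and $\mathcal R^{(j)}$ collects, via the Faà di Bruno formula, all terms $D^iG(\theta_s\omega,\overline v(s))[D^{i_1}\overline v\otimes\cdots\otimes D^{i_m}\overline v]$ with $2\le i\le j$, $i_1+\cdots+i_m=j$. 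Because $D^{i_l}\overline v\in\mathcal C_{i_l\eta_{cu}}$ by the inductive hypothesis and $i_1+\cdots+i_m=j$, the product sits naturally in $\mathcal C_{j\eta_{cu}}$. Running the estimates of Step 2 of Theorem \ref{thm3.4} verbatim with $\eta_{cu}$ replaced by $j\eta_{cu}$ and $\zeta$ replaced by $\zeta+(j-1)\eta_{cu}$, the operator norm of $\mathcal K$ is bounded by $KL\bigl(C_{\zeta+(j-1)\eta_{cu}}-\tfrac{1}{\gamma+j\eta_{cu}}-\tfrac{1}{j\eta_{cu}-\alpha}\bigr)$, which is strictly less than $1$ by \eqref{eq18}. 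The contraction mapping principle therefore produces a unique fixed point $V^{(j)}$ in the space of bounded symmetric $j$-linear maps $X_{0cu}^{\otimes j}\to\mathcal C_{j\eta_{cu}}$, depending continuously on $\xi$.

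The main obstacle is to verify that this candidate $V^{(j)}$ really is the $j$-th Fréchet derivative of $\overline v$, not just a solution of the linearized equation. The plan is a Taylor remainder argument: set
\begin{equation*}
\mathcal E_j(\xi,\eta):=\overline v(\cdot,\omega,\xi+\eta)-\sum_{i=0}^{j}\tfrac{1}{i!}V^{(i)}(\cdot,\omega,\xi)\,\eta^{\otimes i}.
\end{equation*}
Subtracting the fixed point identity satisfied by the truncated Taylor polynomial from the one satisfied by $\overline v$, and exploiting the $C^k$ regularity of $G$ together with the inductive control on $D^i\overline v$ for $i<j$, one derives $\mathcal E_j=\mathcal K\mathcal E_j+\rho_j(\xi,\eta)$ with $\|\rho_j(\xi,\eta)\|_{\mathcal C_{j\eta_{cu}}}=o(\|\eta\|^j)$. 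Invertibility of $I-\mathcal K$ on $\mathcal C_{j\eta_{cu}}$, which follows from the contraction estimate above, then yields $\|\mathcal E_j\|_{\mathcal C_{j\eta_{cu}}}=o(\|\eta\|^j)$, precisely the definition of $V^{(j)}$ being the $j$-th Fréchet derivative; continuity in $\xi$ is automatic from the uniform contraction. An equivalent route is to iterate the Hirsch-Pugh fiber contraction theorem applied to the pair $(\mathcal J,D_\xi\mathcal J)$ on the product spaces $\mathcal C_{(j-1)\eta_{cu}}\times L^{j}_{\mathrm{sym}}(X_{0cu};\mathcal C_{j\eta_{cu}})$. Either approach closes the induction at level $k$, so $\overline v(\cdot,\omega,\xi)$ is $C^k$ in $\xi$, hence $h^{cu}(\cdot,\omega)$ is $C^k$, and $\mathcal M^{cu}(\omega)$ is a $C^k$ center-unstable invariant manifold for \eqref{eq4}.
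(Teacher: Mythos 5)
Your plan is essentially the route the paper takes: the same nested hierarchy of weighted spaces $\mathcal C_{j\eta_{cu}}$, the same linearized Lyapunov--Perron operator whose norm is controlled by $KL\bigl(C_{\zeta+(j-1)\eta_{cu}}-\tfrac{1}{\gamma+j\eta_{cu}}-\tfrac{1}{j\eta_{cu}-\alpha}\bigr)<1$, the same Fa\`a di Bruno decomposition of the forcing $\mathcal R^{(j)}$, and the same induction that reduces $C^k$ to a chain of $C^1$ statements (the paper phrases this as showing $D^{m-1}_\xi\overline v$ is $C^1$, which is equivalent to your $j$-th order Taylor remainder). One caution: the sentence ``one derives $\|\rho_j\|_{\mathcal C_{j\eta_{cu}}}=o(\|\eta\|^j)$'' is where nearly all of the paper's labor lives, because the weighted norm only controls $e^{-\eta_{cu}s-\int_0^s z}\|\overline v(s,\xi+\eta)-\overline v(s,\xi)\|$ and not the pointwise size of the difference as $s\to-\infty$, so the continuity of $D_vG$ cannot be invoked uniformly in $s$; the paper handles this by introducing an auxiliary shift $\nu>0$ (working simultaneously in $\mathcal C_{\eta_{cu}+\nu}$ and $\mathcal C_{\eta_{cu}+2\nu}$) and splitting each of the three integrals into a far-tail piece made small by the improved decay and a compact piece made small by uniform continuity of $D_vG$ on compacta --- the decomposition $\mathcal I=\sum_{i=1}^6\mathcal I_i$ with cutoffs $N_1,N_2,N_3$. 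Your sketch should make that device explicit, since without it the claimed $o(\|\eta\|^j)$ bound does not follow from the hypotheses alone.
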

\begin{proof}
	\noindent {\bf Step 1.} 	We prove that the fixed point $\overline{v}\left(t,\omega,\xi\right)$ of the Lyapunov-Perron operator $\mathcal J\left(\cdot,\omega,\xi\right)$ defined in Theorem \ref{thm3.4} is $C^1$. For notational simplify, we denote $\mathcal{C}_{\eta_{cu}}\left((-\infty,0];X_0\right)$ by $\mathcal{C}_{\eta_{cu}}$ from now on. Notice that for $-\beta<\eta^{'}_{cu}<\eta^{''}_{cu}<-\gamma$, $\mathcal C_{\eta^{''}_{cu}}\subset{\mathcal C_{\eta^{'}_{cu}}}$, thus a fixed point in $\mathcal C_{\eta^{''}_{cu}}$ must be in $\mathcal C_{\eta^{'}_{cu}}$. So $\overline{v}\left(\cdot,\omega,\xi\right)$ is independent of $\eta_{cu}$ satisfying \eqref{eq18}. 

Clearly, there exits $\nu^*>0$ such that for $0<\nu\leq\nu^*$, we have $-\beta<\zeta<i\eta_{cu}+\nu<i\eta_{cu}+2\nu<-\gamma$ and
\begin{equation}\label{eq19}
	KL\left(C_{\zeta-j\nu}-\frac1{\gamma+i\eta_{cu}+\nu}-\frac1{i\eta_{cu}-\alpha+\nu}\right)<1,~\text{for}~\text{all}~1\leq{i}\leq{k},~j=1,2.
\end{equation}
Then $\mathcal J\left(\cdot,\omega,\xi\right)$ is a uniform contraction in $\mathcal C_{\eta_{cu}+\nu}\subset{\mathcal C_{\eta_{cu}}}$. By \eqref{eq16}, for $\xi\in{X_{0cu}}$,
\begin{equation*}
\begin{split}
	\overline v\left(t,\omega,\xi\right)=&\phi_{A_{0c}}\left(t,0\right)\xi+\int_0^t\phi_{A_{0c}}\left(t,s\right)\Pi_c G\left(\theta_s\omega,\overline v\left(s,\omega,\xi\right)\right)ds\\&+\int_0^t\phi_{A_{0u}}\left(t,s\right)\Pi_u G\left(\theta_s\omega,\overline v\left(s,\omega,\xi\right)\right)ds\\&+\lim_{\lambda\to+\infty}\int_{-\infty}^t\phi_{A_{0s}}\left(t,s\right)\lambda\left(\lambda{I}-A_s\right)^{-1}\Pi_s G\left(\theta_s\omega,\overline v\left(s,\omega,\xi\right)\right)ds.\end{split}
\end{equation*}

By \eqref{eq17}, for $\forall\xi,\xi_0\in{X_{0cu}}$ and $i=1,2$, $\overline{v}$ satisfies
\begin{equation}\label{eq20}
	{\left\Vert{\overline{v}\left(t,\omega,\xi\right)-\overline{v}\left(t,\omega,\xi_0\right)}\right\Vert}_{\mathcal C_{\eta_{cu}+i\nu}}\leq\frac{K}{1-KL\left(C_{\zeta-i\nu}-\frac1{\gamma+\eta_{cu}+i\nu}-\frac1{\eta_{cu}-\alpha+i\nu}\right)}\left\Vert{\xi-\xi_0}\right\Vert.
\end{equation}	 

Fix $\nu\in(0,\nu^*]$, we first prove that $\overline{v}$ is differentialble from $X_{0cu}$ to $\mathcal C_{\eta_{cu}-\nu}$. For $\xi_0\in{X_{0cu}}$, $v\in{\mathcal C_{\eta_{cu}+\nu}}$, define $\mathcal{H}:\mathcal C_{\eta_{cu}+\nu}\to{\mathcal C_{\eta_{cu}+\nu}}$ as follows
\begin{equation*}
\begin{split}
	\mathcal{H}v=&\int_0^t\phi_{A_{0c}}\left(t,s\right)\Pi_cD_vG\left(\theta_s\omega,\overline{v}\left(s,\omega,\xi_0\right)\right)v\left(s\right)ds\\&+\int_0^{t}\phi_{A_{0u}}\left(t,s\right)\Pi_uD_vG\left(\theta_s\omega,\overline{v}\left(s,\omega,\xi_0\right)\right)v\left(s\right)ds\\&+\lim_{\lambda\to+\infty}\int_{-\infty}^t\phi_{A_{0s}}\left(t,s\right)\lambda\left(\lambda{I}-A_s\right)^{-1}\Pi_s D_vG\left(\theta_s\omega,\overline{v}\left(s,\omega,\xi_0\right)\right)v\left(s\right)ds.
\end{split}
\end{equation*}
Using the same arguments as we proved that $\mathcal J$ is a contraction and noticing that $\left\Vert{D_v G\left(\theta_s\omega,v\right)}\right\Vert\leq{Lip~G=L}$, we have $\mathcal{H}$ is a bounded linear operator from $\mathcal C_{\eta_{cu}+\nu}$ to itself with the norm
\begin{equation*}
	\left\Vert{\mathcal{H}}\right\Vert\leq{K}L\left(C_{\zeta-\nu}-\frac1{\gamma+\eta_{cu}+\nu}-\frac1{\eta_{cu}-\alpha+\nu}\right)<1.
\end{equation*}
Thus $id-\mathcal{H}$ is invertible in $\mathcal C_{\eta_{cu}+\nu}$. For $\xi,\xi_0\in{X_{0cu}}$, set
\begin{equation*}
	\begin{split}
		\mathcal I\left(t\right)=&\int_0^t\phi_{A_{0c}}\left(t,s\right)\Pi_c\overline\Delta_G\left(s\right)ds+\int_{0}^{t}\phi_{A_{0u}}\left(t,s\right)\Pi_u\overline\Delta_G\left(s\right)ds\\&+\lim_{\lambda\to+\infty}\int_{-\infty}^t\phi_{A_{0s}}\left(t,s\right)\lambda\left(\lambda{I}-A_s\right)^{-1}\Pi_s\overline\Delta_G\left(s\right)ds.
	\end{split}
\end{equation*}
where 
\begin{equation*}\begin{split}
	\overline\Delta_G\left(s\right)=&G\left(\theta_s\omega,\overline{v}\left(s,\omega,\xi\right)\right)- G\left(\theta_s\omega,\overline{v}\left(s,\omega,\xi_0\right)\right)\\&-D_v G\left(\theta_s\omega,\overline{v}\left(s,\omega,\xi_0\right)\right)\left(\overline{v}\left(s,\omega,\xi\right)-\overline{v}\left(s,\omega,\xi_0\right)\right).
	\end{split}
\end{equation*}
We prove $\left\Vert{\mathcal I}\right\Vert_{\mathcal C_{\eta_{cu}-\nu}}=o\left(\left\Vert\xi-\xi_0\right\Vert\right)$ as $\xi\to\xi_0$, then we have 
\begin{equation*}
\begin{split}
\overline{v}\left(\cdot,\omega,\xi\right)-\overline{v}\left(\cdot,\omega,\xi_0\right)&-\mathcal{H}\left(\overline{v}\left(\cdot,\omega,\xi\right)-\overline{v}\left(\cdot,\omega,\xi_0\right)\right)\\&=\phi_{A_0}\left(t,0\right)\left(\xi-\xi_0\right)+\mathcal I\\&=	\phi_{A_0}\left(t,0\right)\left(\xi-\xi_0\right)+o\left(\left\Vert\xi-\xi_0\right\Vert\right),
\end{split}
\end{equation*}
which implies
\begin{equation*}
	\overline{v}\left(\cdot,\omega,\xi\right)-\overline{v}\left(\cdot,\omega,\xi_0\right)=\left(id-\mathcal{H}\right)^{-1}\phi_{A_0}\left(t,0\right)\left(\xi-\xi_0\right)+o\left(\left\Vert\xi-\xi_0\right\Vert\right).
\end{equation*}
We know that $\phi_{A_0}\left(t,0\right)=T\left(t\right)e^{\int_{0}^{t}z\left(\theta_r\omega\right)dr}$ is a bounded operator, therefore $\overline{v}\left(\cdot,\omega,\xi\right)$ is differentiable in $\xi$ and $D_\xi\overline{v}\left(\cdot,\omega,\xi\right)\in{\mathcal L}\left(X_{0c},\mathcal C_{\eta_{cu}+\nu}\right)$. Next we prove $\left\Vert{\mathcal I}\right\Vert_{\mathcal C_{\eta_{cu}+\nu}}=o\left(\left\Vert\xi-\xi_0\right\Vert\right)$ as $\xi\to\xi_0$. We write $e^{-\left(\eta_{cu}+\nu\right)t-\int_0^tz\left(\theta_r\omega\right)dr}\mathcal I$ as a sum of six terms, i.e., $e^{-\left(\eta_{cu}+\nu\right)t-\int_0^tz\left(\theta_r\omega\right)dr}\mathcal I=\sum_{i=1}^6\mathcal I_i$, where for $t<N_1$,
\begin{equation*}
\begin{split}
	\mathcal I_1=e^{-\left(\eta_{cu}+\nu\right)t-\int_{0}^tz\left(\theta_r\omega\right)dr}\int_{N_1}^t\phi_{A_{0c}}\left(t,s\right)\Pi_c\overline\Delta_G\left(s\right)ds,
	\end{split}
\end{equation*}
and $\mathcal I_1=0$ for $t\geq{N_1}$; For $t<N_1$,
\begin{equation*}
\begin{split}
	&\mathcal I_2=e^{-\left(\eta_{cu}+\nu\right)t-\int_{0}^tz\left(\theta_r\omega\right)dr}\int_{0}^{N_1}\phi_{A_{0c}}\left(t,s\right)\Pi_c\overline\Delta_G\left(s\right)ds,
	\end{split}
\end{equation*}
and change $N_1$ to $t$ if $t\geq{N_1}$. For $t<N_2$,
\begin{equation*}
\begin{split}
	\mathcal I_3=e^{-\left(\eta_{cu}+\nu\right)t-\int_{0}^tz\left(\theta_r\omega\right)dr}\int_{N_2}^{t}\phi_{A_{0u}}\left(t,s\right)\Pi_u\overline\Delta_G\left(s\right)ds,
	\end{split}
\end{equation*}
and $\mathcal I_3=0$ for $t\geq{N_2}$; For $t<N_2$,
\begin{equation*}
\begin{split}
	\mathcal I_4=e^{-\left(\eta_{cu}+\nu\right)t-\int_{0}^tz\left(\theta_r\omega\right)dr}\int_{0}^{N_2}\phi_{A_{0u}}\left(t,s\right)\Pi_u\overline\Delta_G\left(s\right)ds,
	\end{split}
\end{equation*}
and change $N_2$ to $t$ if $t\geq{N_2}$. For $t>N_3$,
\begin{equation*}
\begin{split}
	\mathcal I_5=e^{-\left(\eta_{cu}+\nu\right)t-\int_{0}^tz\left(\theta_r\omega\right)dr}\lim_{\lambda\to+\infty}\int_{N_3}^t\phi_{A_{0s}}\left(t,s\right)\lambda\left(\lambda{I}-A_s\right)^{-1}\Pi_s\overline\Delta_G\left(s\right)ds,
	\end{split}
\end{equation*}
and $\mathcal I_5=0$ for $t\leq{N_3}$; For $t>N_3$,
\begin{equation*}
\begin{split}
	\mathcal I_6=e^{-\left(\eta_{cu}+\nu\right)t-\int_{0}^tz\left(\theta_r\omega\right)dr}\lim_{\lambda\to+\infty}\int_{-\infty}^{N_3}\phi_{A_{0s}}\left(t,s\right)\lambda\left(\lambda{I}-A_s\right)^{-1}\Pi_s\overline\Delta_G\left)s\right)ds,
	\end{split}
\end{equation*}
and change $N_3$ to $t$ if $t\leq{N_3}$. $N_1$, $N_2$, $N_3$ above are negative numbers to be chosen later. By \eqref{eq4} and \eqref{eq17}, for $t<N_1$,
\begin{equation*}
	\begin{split}
	\left\vert{\mathcal I_1}\right\vert&\leq2KL\int_{t}^{N_1}e^{\gamma\left\vert{t-s}\right\vert}e^{-\left(\eta_{cu}+\nu\right)t}e^{\left(\eta_{cu}+2\nu\right)s}ds\left\Vert\overline{v}\left(\cdot,\omega,\xi\right)-\overline{v}\left(\cdot,\omega,\xi_0\right)\right\Vert_{\mathcal C_{\eta_{cu}+2\nu}}\\&\leq2KL\left\Vert\overline{v}\left(\cdot,\omega,\xi\right)-\overline{v}\left(\cdot,\omega,\xi_0\right)\right\Vert_{\mathcal C_{\eta_{cu}+2\nu}}e^{-\left(\gamma+\eta_{cu}+\nu\right)t}	\int_{t}^{N_1}e^{\left(\gamma+\eta_{cu}+2\nu\right)s}ds\\&\leq	\frac{-2K^2L}{\left(\gamma+\eta_{cu}+2\nu\right)\left[1-KL\left(C_{\zeta-2\nu}-\frac1{\gamma+\eta_{cu}+2\nu}-\frac1{\eta_{cu}-\alpha+2\nu}\right)\right]}e^{\nu{N_1}}\left\Vert\xi-\xi_0\right\Vert.
	\end{split}
\end{equation*} 
For given $\varepsilon>0$, choose $N_1$ so negative that 
\begin{equation}\label{eq21}
	\underset{t\leq0}{\sup}\left\vert{\mathcal I_1}\right\vert\leq\frac{\varepsilon}{6}\left\Vert\xi-\xi_0\right\Vert.
\end{equation}
Fix such $N_1$, for $t\leq0$,
\begin{equation*}
\begin{split}
	&\left\vert{\mathcal I_2}\right\vert\leq{K}\left\Vert\overline{v}\left(\cdot,\omega,\xi\right)-\overline{v}\left(\cdot,\omega,\xi_0\right)\right\Vert_{\mathcal C_{\eta_{cu}+\nu}}e^{-\left(\gamma+\eta_{cu}+\nu\right)t}	\int_{N_1}^{0}e^{\left(\gamma+\eta_{cu}+\nu\right)s}\\&\times\int_0^1\Vert{D}_vG\left(\theta_s\omega,\tau\overline{v}\left(s,\omega,\xi\right)+\left(1-\tau\right)\overline{v}\left(s,\omega,\xi_0\right)\right)-D_vG\left(\theta_s\omega,\overline{v}\left(s,\omega,\xi_0\right)\right)\Vert{d\tau}ds.
\end{split}
\end{equation*}
Since $G\left(\omega,v\right)$ is $C^k$, $D_vG\left(\omega,v\right)$ is continuous. For $\varepsilon>0$, there exists $\rho_1>0$ such that if $\left\Vert\xi-\xi_0\right\Vert<\rho_1$,
\begin{equation*}
\begin{split}
	&\left\Vert{D}_vG\left(\theta_s\omega,\tau\overline{v}\left(s,\omega,\xi\right)+\left(1-\tau\right)\overline{v}\left(s,\omega,\xi_0\right)\right)-D_vG\left(\theta_s\omega,\overline{v}\left(s,\omega,\xi_0\right)\right)\right\Vert\\&\leq\frac{-\varepsilon\left(\gamma+\eta_{cu}+\nu\right)\left[1-KL\left(C_{\zeta-\nu}-\frac1{\gamma+\eta_{cu}+\nu}-\frac1{\eta_{cu}-\alpha+\nu}\right)\right]}{6K^2e^{\left(\gamma+\eta_{cu}+\nu\right)N_1}}.
\end{split}
\end{equation*}
Then by \eqref{eq17},
\begin{equation}\label{eq22}
	\underset{t\leq0}{\sup}\left\vert{\mathcal I_2}\right\vert\leq\frac{\varepsilon}{6}\left\Vert\xi-\xi_0\right\Vert ~\text{for} ~\left\Vert\xi-\xi_0\right\Vert<\rho_1.
\end{equation}
Similarly, by choosing $N_2$ to be sufficiently negative, we have
\begin{equation}\label{eq23}
	\underset{t\leq0}{\sup}\left\vert{\mathcal I_3}\right\vert\leq\frac{\varepsilon}{6}\left\vert\xi-\xi_0\right\vert.
\end{equation}
Fix such $N_2$, there exists $\rho_2>0$ such that if $\left\Vert\xi-\xi_0\right\Vert<\rho_2$, 
\begin{equation}\label{eq24}
	\underset{t\leq0}{\sup}\left\vert{\mathcal I_4}\right\vert\leq\frac{\varepsilon}{6}\left\Vert\xi-\xi_0\right\Vert.
\end{equation} 	
And for $\mathcal I_6$, for $t>N_3$,
\begin{equation*}
	\begin{split}
	&	\left\vert{\mathcal I_6}\right\vert\\&=e^{-\left(\eta_{cu}+\nu\right)t-\int_{0}^tz\left(\theta_r\omega\right)dr}\bigg\vert\lim_{\lambda\to+\infty}\lim_{r\to-\infty}\int_{0}^{N_3-r}\phi_{A_{0s}}\left(t,l+r\right)\lambda{\left(\lambda I-A_s\right)^{-1}}\Pi_s\overline\Delta_G\left(l+r\right)dl\bigg\vert\\&\leq2LC_{\zeta-\nu}\underset{h\in(-\infty,N_3]}{\sup}e^{\left(\zeta-\nu\right)\left(t-h\right)}e^{-\left(\eta_{cu}+\nu\right)t}e^{\left(\eta_{cu}+2\nu\right)h}\left\Vert\overline{v}\left(h,\omega,\xi\right)-\overline{v}\left(h,\omega,\xi_0\right)\right\Vert_{\mathcal C_{\eta_{cu}+2\nu}}\\&\leq2LC_{\zeta-\nu}\underset{h\in(-\infty,N_3]}{\sup}e^{\left(\zeta-2\nu-\eta_{cu}\right)\left(t-h\right)}e^{\nu h}\left\Vert\overline{v}\left(\cdot,\omega,\xi\right)-\overline{v}\left(\cdot,\omega,\xi_0\right)\right\Vert_{\mathcal C_{\eta_{cu}+2\nu}}\\&\leq2LC_{\zeta-\nu}e^{\nu N_3}\left\Vert\overline{v}\left(\cdot,\omega,\xi\right)-\overline{v}\left(\cdot,\omega,\xi_0\right)\right\Vert_{\mathcal C_{\eta_{cu}+2\nu}}\\&\leq\frac{2KLC_{\zeta-\nu}e^{\nu{N_3}}}{1-KL\left(C_{\zeta-2\nu}-\frac1{\gamma+\eta_{cu+2\nu}}-\frac1{\eta_{cu}-\alpha+2\nu}\right)}\left\Vert\xi-\xi_0\right\Vert.
	\end{split}
\end{equation*}
Choose $N_3$ so negative that 
\begin{equation}\label{eq25}
	\underset{t\leq0}{\sup}\left\vert{\mathcal I_6}\right\vert\leq\frac{\varepsilon}{6}\left\Vert\xi-\xi_0\right\Vert.
\end{equation}
Fix such $N_3$, for $\mathcal I_5$, $t>N_3$, 
\begin{equation*}
	\begin{split}
		&\left\vert{\mathcal I_5}\right\vert=e^{-\left(\eta_{cu}+\nu\right)t-\int_{0}^tz\left(\theta_r\omega\right)dr}\bigg\vert\lim_{\lambda\to+\infty}\int_{0}^{t-N_3}\phi_{A_{0s}}\left(t,l+N_3\right)\lambda{\left(\lambda I-A_s\right)^{-1}}\Pi_s\overline\Delta_G\left(l+N_3\right)dl\bigg\vert\\&\leq C_{\zeta-2\nu}\underset{h\in[N_3,t]}{\sup}e^{\left(\zeta-2\nu\right)\left(t-h\right)}e^{-\left(\eta_{cu}+\nu\right)t}e^{\left(\eta_{cu}+2\nu\right)h}\left\Vert\overline{v}\left(h,\omega,\xi\right)-\overline{v}\left(h,\omega,\xi_0\right)\right\Vert_{\mathcal C_{\eta_{cu}+2\nu}}\\&\times\int_0^1\Vert{D}_vG\left(\theta_h\omega,\tau\overline{v}\left(h,\omega,\xi\right)+\left(1-\tau\right)\overline{v}\left(h,\omega,\xi_0\right)\right)-D_vG\left(\theta_h\omega,\overline{v}\left(h,\omega,\xi_0\right)\right)\Vert{d\tau}\\&\leq C_{\zeta-2\nu}\underset{h\in[N_3,t]}{\sup}e^{\left(\zeta-4\nu-\eta_{cu}\right)\left(t-h\right)}e^{\nu{t}}\left\Vert\overline{v}\left(\cdot,\omega,\xi\right)-\overline{v}\left(\cdot,\omega,\xi_0\right)\right\Vert_{\mathcal C_{\eta_{cu}+2\nu}}\\&\times\int_0^1\Vert{D}_vG\left(\theta_h\omega,\tau\overline{v}\left(h,\omega,\xi\right)+\left(1-\tau\right)\overline{v}\left(h,\omega,\xi_0\right)\right)-D_vG\left(\theta_h\omega,\overline{v}\left(h,\omega,\xi_0\right)\right)\Vert{d\tau}.
	\end{split}
\end{equation*}
Similarly, for $\varepsilon>0$, there exists $\rho_3>0$ such that if $\Vert\xi-\xi_0\Vert<\rho_3$,
\begin{equation*}
\begin{split}
	&\left\Vert{D}_vG\left(\theta_h\omega,\tau\overline{v}\left(h,\omega,\xi\right)+\left(1-\tau\right)\overline{v}\left(h,\omega,\xi_0\right)\right)-D_vG\left(\theta_h\omega,\overline{v}\left(h,\omega,\xi_0\right)\right)\right\Vert\\&\leq\frac{1-KL\left(C_{\zeta-\nu}-\frac1{\gamma+\eta_{cu}+\nu}-\frac1{\eta_{cu}-\alpha+\nu}\right)}{KC_{\zeta-2\nu}}.
\end{split}
\end{equation*}
Then by \eqref{eq17},
\begin{equation}\label{eq26}
	\underset{t\leq0}{\sup}\left\vert{\mathcal I_5}\right\vert\leq\frac{\varepsilon}{6}\left\Vert\xi-\xi_0\right\Vert ~\text{for} ~\left\Vert\xi-\xi_0\right\Vert<\rho_3.
\end{equation}
By taking $\rho_0=\min\left\{\rho_1,\rho_2,\rho_3\right\}$ and combining \eqref{eq21}-\eqref{eq26}, we have
\begin{equation*}
	\left\Vert{\mathcal I}\right\Vert_{\mathcal C_{\eta_{cu}-\nu}}\leq\varepsilon\left\Vert\xi-\xi_0\right\Vert~\text{for}~\left\Vert\xi-\xi_0\right\Vert<\rho_0. 
\end{equation*}
This implies $\left\Vert{\mathcal I}\right\Vert_{\mathcal C_{\eta_{cu}-\nu}}=o\left(\left\Vert\xi-\xi_0\right\Vert\right)$ as $\xi\to\xi_0$. Next we prove $D_\xi\overline{v}\left(\cdot,\omega,\xi\right)$ is continuous with respect to $\xi$ from $X_{0cu}$ to 	$\mathcal L\left(X_{0cu},\mathcal C_{\eta_{cu}}\right)$. By \eqref{eq16}, we have
\begin{equation*}
\begin{split}
	&D_\xi\overline{v}\left(t,\omega,\xi\right)=\phi_{A_{0c}}\left(t,0\right)\Pi_{0cu}\\&+\int_0^t\phi_{A_{0c}}\left(t,s\right)\Pi_cD_vG\left(\theta_s\omega,\overline{v}\left(s,\omega,\xi\right )\right)D_\xi\overline{v}\left(s,\omega,\xi\right)ds\\&+\int_0^{t}\phi_{A_{0u}}\left(t,s\right)\Pi_uD_vG\left(\theta_s\omega,\overline{v}\left(s,\omega,\xi\right)\right)D_\xi\overline{v}\left(s,\omega,\xi\right)ds\\&+\lim_{\lambda\to+\infty}\int_{-\infty}^t\phi_{A_{0s}}\left(t,s\right)\lambda\left(\lambda I-A_s\right)^{-1}\Pi_sD_vG\left(\theta_s\omega,\overline{v}\left(s,\omega,\xi\right)\right)D_\xi\overline{v}\left(s,\omega,\xi\right)ds. 
\end{split}
\end{equation*}
For $\xi\in{X_{0cu}}$, $v\in{\mathcal C_{\eta_{cu}}}$, define the operator $\mathcal{H}^{'}:\mathcal C_{\eta_{cu}}\to{\mathcal C_{\eta_{cu}}}$ as follows
\begin{equation*}
\begin{split}
	\mathcal{H^{'}}v=&\int_0^t\phi_{A_{0c}}\left(t,s\right)\Pi_cD_vG\left(\theta_s\omega,\overline{v}\left(s,\omega,\xi\right)\right)v\left(s\right)ds\\&+\int_0^{t}\phi_{A_{0u}}\left(t,s\right)\Pi_uD_vG\left(\theta_s\omega,\overline{v}\left(s,\omega,\xi\right)\right)v\left(s\right)ds\\&+\lim_{\lambda\to+\infty}\int_{-\infty}^t\phi_{A_{0s}}\left(t,s\right)\lambda\left(\lambda I-A_s\right)^{-1}\Pi_sD_vG\left(\theta_s\omega,\overline{v}\left(s,\omega,\xi\right)\right)v\left(s\right)ds.
\end{split}
\end{equation*}
Similarly, we have
\begin{equation}\label{eq27}
	\left\Vert{\mathcal{H}'}\right\Vert\leq{K}L\left(C_{\zeta-\nu}-\frac1{\gamma+\eta_{cu}+\nu}-\frac1{\eta_{cu}-\alpha+\nu}\right)<1.
\end{equation}
For $\xi,\xi_0\in{X_{0cu}}$, 
\begin{equation}\label{eq28}
	\begin{split}
		&D_\xi\overline{v}\left(t,\omega,\xi\right)-D_\xi\overline{v}\left(t,\omega,\xi_0\right)\\&=\int_0^t\phi_{A_{0c}}\left(t,s\right)\Pi_c\Big[D_vG\left(\theta_s\omega,\overline{v}\left(s,\omega,\xi\right)\right)D_\xi\overline{v}\left(s,\omega,\xi\right)\\&-D_vG\left(\theta_s\omega,\overline{v}\left(s,\omega,\xi_0\right)\right)D_\xi\overline{v}\left(s,\omega,\xi_0\right)\Big]ds\\&+\int_{0}^{t}\phi_{A_{0u}}\left(t,s\right)\Pi_u\Big[D_vG\left(\theta_s\omega,\overline{v}\left(s,\omega,\xi\right)\right)D_\xi\overline{v}\left(s,\omega,\xi\right)\\&-D_vG\left(\theta_s\omega,\overline{v}\left(s,\omega,\xi_0\right)\right)D_\xi\overline{v}\left(s,\omega,\xi_0\right)\Big]ds\\&+\lim_{\lambda\to+\infty}\int_{-\infty}^t\phi_{A_{0s}}\left(t,s\right)\lambda\left(\lambda I-A_s\right)^{-1}\Pi_s\Big[D_vG\left(\theta_s\omega,\overline{v}\left(s,\omega,\xi\right)\right)D_\xi\overline{v}\left(s,\omega,\xi\right)\\&-D_vG\left(\theta_s\omega,\overline{v}\left(s,\omega,\xi_0\right)\right)D_\xi\overline{v}\left(s,\omega,\xi_0\right)\Big]ds\\&=\mathcal{H}^{'}\left(D_\xi\overline{v}\left(t,\omega,\xi\right)-D_\xi\overline{v}\left(t,\omega,\xi_0\right)\right)+{\mathcal I}',
	\end{split}
\end{equation}
where
\begin{equation*}
\begin{split}
{\mathcal I}^{'}&=\int_0^t\phi_{A_{0c}}\left(t,s\right)\Pi_c\Big[D_vG\left(\theta_s\omega,\overline{v}\left(s,\omega,\xi\right)\right)-D_vG\left(\theta_s\omega,\overline{v}\left(s,\omega,\xi_0\right)\right)\Big]D_\xi\overline{v}\left(s,\omega,\xi_0\right)ds\\&+\int_{0}^{t}\phi_{A_{0u}}\left(t,s\right)\Pi_u\Big[D_vG\left(\theta_s\omega,\overline{v}\left(s,\omega,\xi\right)\right)-D_vG\left(\theta_s\omega,\overline{v}\left(s,\omega,\xi_0\right)\right)\Big]D_\xi\overline{v}\left(s,\omega,\xi_0\right)ds\\&+\lim_{\lambda\to+\infty}\int_{-\infty}^t\phi_{A_{0s}}\left(t,s\right)\lambda\left(\lambda I-A_s\right)^{-1}\Pi_s\Big[D_vG\left(\theta_s\omega,\overline{v}\left(s,\omega,\xi\right)\right)\\&-D_vG\left(\theta_s\omega,\overline{v}\left(s,\omega,\xi_0\right)\right)\Big]D_\xi\overline{v}\left(s,\omega,\xi_0\right)ds.
\end{split}
\end{equation*}
By \eqref{eq27}, $id-\mathcal{H}^{'}$ has a bounded inverse in $ \mathcal L\left(X_{0cu},\mathcal C_{\eta_{cu}}\right)$. From \eqref{eq28}, we have
\begin{equation*}
	D_\xi\overline{v}\left(t,\omega,\xi\right)-D_\xi\overline{v}\left(t,\omega,\xi_0\right)=\left(id-\mathcal{H}^{'}\right)^{-1}{\mathcal I}^{'}.
\end{equation*}
Then by \eqref{eq28},
\begin{equation}\label{eq29}
	\Vert{D_\xi\overline{v}\left(t,\omega,\xi\right)-D_\xi\overline{v}\left(t,\omega,\xi_0\right)}\Vert_{ \mathcal L\left(X_{0cu},\mathcal C_{\eta_{cu}}\right)}\leq\frac{{\left\Vert{{\mathcal I}^{'}}\right\Vert}_{\mathcal L\left(X_{0cu},\mathcal C_{\eta_{cu}}\right)}}{1-{K}L\left(C_{\zeta}-\frac1{\gamma+\eta_{cu}}-\frac1{\eta_{cu}-\alpha}\right)}.
\end{equation}
Using the same procedure as for $\mathcal I$, we have ${\Vert{{\mathcal I}^{'}}\Vert}_{ \mathcal L\left(X_{0cu},\mathcal C_{\eta_{cu}}\right)}=o\left(1\right)$ as $\xi\to\xi_0$. Then by \eqref{eq29}, $D_\xi\overline{v}\left(\cdot,\omega,\xi\right)$ is continuous with respect to $\xi$. So we proved that $\overline{v}\left(t,\omega,\xi\right)$ is $C^1$. 
	
\noindent {\bf Step 2.} We prove $\overline{v}\left(t,\omega,\xi\right)$ is $C^k$. 

Make the inductive assumption that $\overline{v}\left(\cdot,\omega,\xi\right)$ is $C^j$ from $X_{0cu}$ to $\mathcal C_{j\eta_{cu}}$ for all $1\leq{j}\leq{m-1}$ and we prove it for $j=m$. By calculation, the $\left(m-1\right)$th-derivative $D^{m-1}_\xi\overline{v}\left(\cdot,\omega,\xi\right)$ satiefies the following equation
\begin{equation*}
	\begin{split}
		&D^{m-1}_\xi\overline{v}\left(t,\omega,\xi\right)=\int_0^t\phi_{A_{0c}}\left(t,s\right)\Pi_cD_vG\left(\theta_s\omega,\overline{v}\left(s,\omega,\xi\right)\right)D^{m-1}_\xi\overline{v}\left(s,\omega,\xi\right)ds\\&+\int_0^{t}\phi_{A_{0u}}\left(t,s\right)\Pi_uD_vG\left(\theta_s\omega,\overline{v}\left(s,\omega,\xi\right)\right)D^{m-1}_\xi\overline{v}\left(s,\omega,\xi\right)ds\\&+\lim_{\lambda\to+\infty}\int_{-\infty}^t\phi_{A_{0s}}\left(t,s\right)\lambda\left(\lambda I-A_s\right)^{-1}\Pi_sD_vG\left(\theta_s\omega,\overline{v}\left(s,\omega,\xi\right)\right)D^{m-1}_\xi\overline{v}\left(s,\omega,\xi\right)ds\\&+\int_0^t\phi_{A_{0c}}\left(t,s\right)\Pi_cR_{m-1}\left(s,\omega,\xi\right)ds+\int_0^{t}\phi_{A_{0u}}\left(t,s\right)\Pi_uR_{m-1}\left(s,\omega,\xi\right)ds\\&+\lim_{\lambda\to+\infty}\int_{-\infty}^t\phi_{A_{0s}}\left(t,s\right)\lambda\left(\lambda I-A_s\right)^{-1}\Pi_sR_{m-1}\left(s,\omega,\xi\right)ds,
	\end{split}
\end{equation*}
where
\begin{equation*}
	R_{m-1}\left(s,\omega,\xi\right)=\sum_{i=1}^{m-3}\binom{m-2}{i}D^{m-2-i}_\xi\left(D_vG\left(\theta_s\omega,\overline{v}\left(s,\omega,\xi\right)\right)\right)D^{i+1}_\xi\overline{v}\left(s,\omega,\xi\right).
\end{equation*}
Applying the chain rule to $D^{m-2-i}_\xi\left(D_vG\left(\theta_s\omega,\overline{v}\left(s,\omega,\xi\right)\right)\right)$, we observe that each term in $R_{m-1}\left(s,\omega,\xi\right)$ contains factors: $D^{i_1}_v{G\left(\theta_s\omega,\overline{v}\left(s,\omega,\xi\right)\right)}$ for some $2\leq{i_1}\leq{m-1}$ and at least two derivatives $D^{i_2}_\xi\overline{v}\left(s,\omega,\xi\right)$ and $D^{i_3}_\xi\overline{v}\left(s,\omega,\xi\right)$ for some $i_2,i_3\in\{1,...,m-2\}$. Since $D^{i}_\xi\overline{v}\left(\cdot,\omega,\xi\right)\in{\mathcal C_{i\eta_{cu}}}$ for $i=1,...,m-1$ and $F$ is $C^k$, $R_{m-1}\left(\cdot,\omega,\xi\right):X_{0cu}\to{L^{m-1}\left(X_{0cu},\mathcal C_{\left(m-1\right)\eta_{cu}+\nu}\right)}$ are $C^1$ in $\xi$. Using the same argument we used in the case $k=1$, we can show that $D^{m-1}_\xi\overline{v}\left(\cdot,\omega,\xi\right)$ is $C^1$ from $X_{0cu}$ to $L^m\left(X_{0cu},\mathcal C_{m\eta_{cu}}\right)$. So $\overline{v}\left(t,\omega,\xi\right)$ is $C^k$.
Thus $h^{cu}\left(\xi,\omega\right)$ is $C^k$, which gurantees the $C^k$-smoothness of $\mathcal{M}^{cu}\left(\omega\right)$ and  $\mathcal{M}^{cu*}\left(\omega\right)$.

\end{proof}
The existence and smoothness of the center-stable manifold $\mathcal M^{cs*}\left(\omega\right)$ of \eqref{eq1} can be obtained through similar arguments.

\section{Smooth Center-stable foliations}\label{sec4}

In this section, we study the existence of invariant foliations of \eqref{eq1}. We first introduce the concept of invariant foliations, which relies on the decomposition of the space $X$ as well. Fix $\omega\in\Omega$, let $\{\mathcal W\left(x,\omega\right):x\in X_0\}$ be a family of submanifolds of $X_0$ parametrized by $x\in X_0$. 
\begin{definition}\label{def4.1}
	A family of submanifolds $\{\mathcal W^{cs}\left(x,\omega\right):x\in X_0\}$ is called a $C^{k}$ center-stable foliation for $X_0$ if the following conditions are satisfied:
	\begin{itemize}
		\item [$\left(\mathrm{i}\right)$]$x\in\mathcal W^{cs}\left(x,\omega\right)$ for each $x\in X_0$.
		\item [$\left(\mathrm{ii}\right)$]$\mathcal W^{cs}\left(x,\omega\right)$ and $\mathcal W^{cs}\left(\tilde x,\omega\right)$ are either disjoint or identical for each $x,\tilde x\in X_0$.
		\item [$\left(\mathrm{iii}\right)$]$\mathcal W^{cs}\left(0,\omega\right)$ is tangent to $X_{0cs}$ at the origin, every leaf $\mathcal W^{cs}\left(x,\omega\right)$ is given by the graph of a $C^k$ function $l^{cs}\left(\cdot,\omega,x\right):X_{0cs}\to X_{0u}$, i.e.,
		\begin{equation*}
			\mathcal W^{cs}=\{\iota+l^{cs}\left(\iota,\omega,x\right):\iota\in X_{0cs}\}.
		\end{equation*}
		
	\end{itemize}
	Accordingly, a $C^k$ center-unstable(resp. stable, unstable) foliation $\mathcal W^{cu}\left(\omega\right)$(resp. $\mathcal W^{s}\left(\omega\right)$, $\mathcal W^{u}\left(\omega\right)$) is given by the graph of a Lipschitz map $h^i\left(\cdot,\omega\right):X_{0i}\to X_{0j}$, $i=cu$(resp. $s, u$) and $j=s$(resp. $j=cu, cs$).
\end{definition}

Define
\begin{equation*}
	\mathcal{W}^{cs}\left(x,\omega\right)=\{\tilde{x}\in X_0:v\left(t,\omega,\tilde{x}\right)-v\left(t,\omega,x\right)\in\mathcal{C}_{\eta_{cs}}\left([0,+\infty);X_0\right)\}.
\end{equation*}
where $v\left(t,\omega,x\right)$ is the solution of \eqref{eq4} with initial data $v\left(0\right)=x$. We below prove that $\mathcal{W}^{cs}\left(x,\omega\right)$ is invariant and it is given by the graph of a Lipschitz function.  
\begin{theorem}\label{thm4.2}
	Assume $-\beta<\chi<\eta_{cs}$ and $\gamma<\eta_{cs}<\min\{\alpha,\beta\}$ with 
\begin{equation*}
	{KL\left(C_\chi+\frac1{\eta_{cs}-\gamma}+\frac1{\alpha-\eta_{cs}}\right)}<1,
\end{equation*}
and assume for $\sigma^*>0$ such that for $0\leq\sigma\leq\sigma^*$, $\gamma<\eta_{cs}-2\sigma<\eta_{cs}-\sigma<\min\{\alpha,\beta\}$, $-\beta<\chi<\eta_{cs}-2\sigma<\eta_{cs}-\sigma$ with
\begin{equation}\label{eq30}
	KL\left(C_{\chi+i\sigma}+\frac1{\eta_{cs}-i\sigma-\gamma}+\frac1{\alpha-i\eta_{cs}+i\sigma}\right)<\frac16,~i=1,2,
\end{equation}
There exists a Lipschitz invariant center-stable foliation for \eqref{eq4} whose leaf is given by 
\begin{equation*}
	\mathcal{W}^{cs}\left(x,\omega\right)=\{\iota+l^{cs}\left(\iota,\omega,x\right):\iota\in X_{0cs}\},
\end{equation*}
where $x\in X_0$ and $l^{cs}\left(\cdot,\omega,x\right):X_{0cs}\to X_{0u}$ is measurable with respect to $\left(\iota,\omega,x\right)$ and Lipschitz continuous in $\iota$ with
\begin{equation*}
	Lip~l^{cs}\left(\cdot,\omega,x\right)\leq K_s=\frac{K^2L}{\left(\alpha-\eta_{cs}\right)1-KL\left(C_\chi+\frac1{\eta_{cs }-\gamma}+\frac1{\alpha-\eta_{cs}}\right)}\end{equation*}
Moreover, if $K_uK_s<1$($K_u$is defined in Theorem \ref{thm3.4}), then $\mathcal W^{cs}\left(x,\omega\right)\cap\mathcal M^{cu}\left(\omega\right)$ contains a unique point.
\end{theorem}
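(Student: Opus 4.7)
The plan is to follow the Lyapunov-Perron scheme of Theorem \ref{thm3.4} but ``time-reversed'' for the unstable direction. I would set $w(t) = v(t,\omega,\tilde x) - v(t,\omega,x)$ and observe that $\tilde x \in \mathcal W^{cs}(x,\omega)$ precisely when $w \in \mathcal C_{\eta_{cs}}([0,+\infty);X_0)$. Applying the variation-of-constants formula from Lemma \ref{lem3.3}(i) to both $v(\cdot,\omega,\tilde x)$ and $v(\cdot,\omega,x)$, projecting onto the three invariant subspaces via Proposition \ref{pro3.2}, and --- because $\eta_{cs} < \alpha$ --- rewriting the unstable component by showing that the boundary term $\phi_{A_{0u}}(t,T)\Pi_{0u}w(T)$ tends to $0$ as $T \to +\infty$ for every $w \in \mathcal C_{\eta_{cs}}$, I would arrive at the integral equation
\begin{equation*}
\begin{split}
w(t) =\ & \phi_{A_{0cs}}(t,0)\iota + \int_0^t \phi_{A_{0c}}(t,s)\Pi_c \Delta G(s)\,ds \\
& + \lim_{\lambda\to+\infty}\int_0^t \phi_{A_{0s}}(t,s)\lambda(\lambda I - A_s)^{-1}\Pi_s \Delta G(s)\,ds \\
& - \int_t^{+\infty} \phi_{A_{0u}}(t,s)\Pi_u \Delta G(s)\,ds,
\end{split}
\end{equation*}
with parameter $\iota = \Pi_{cs} w(0) \in X_{0cs}$ and $\Delta G(s) = G(\theta_s\omega,v(s,\omega,\tilde x)) - G(\theta_s\omega,v(s,\omega,x))$.

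Denoting the right-hand side by $\mathcal L(\cdot,\omega,\iota,x)$, I would prove it is a uniform contraction on $\mathcal C_{\eta_{cs}}$. The center contribution uses $\|\phi_{A_{0c}}(t,s)\Pi_c\| \leq Ke^{\gamma|t-s|+\int_s^t z(\theta_r\omega)dr}$ and yields the factor $\frac{KL}{\eta_{cs}-\gamma}$; the unstable contribution uses $\|\phi_{A_{0u}}(t,s)\Pi_u\| \leq Ke^{\alpha(t-s)+\int_s^t z(\theta_r\omega)dr}$ for $s \geq t$ together with $\alpha > \eta_{cs}$ to guarantee convergence of the improper integral, giving $\frac{KL}{\alpha-\eta_{cs}}$; the stable contribution is controlled by Proposition \ref{pro3.2} with the constant $C_\chi$. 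Summing reproduces the assumed bound $KL(C_\chi + \tfrac{1}{\eta_{cs}-\gamma} + \tfrac{1}{\alpha-\eta_{cs}}) < 1$. Banach's fixed-point theorem then produces a unique $\bar w(\cdot,\omega,\iota,x) \in \mathcal C_{\eta_{cs}}$ that depends measurably on $(\omega,\iota,x)$ (via the standard Castaing-type argument used in the proof of Theorem \ref{thm3.4}) and Lipschitz-continuously on $\iota$ with constant $K/(1 - KL(\cdots))$. Setting $l^{cs}(\iota,\omega,x) := \Pi_u x - \int_0^{+\infty}\phi_{A_{0u}}(0,s)\Pi_u \Delta G(s)\,ds$ from the evaluation of $\Pi_u \bar w(0,\omega,\iota,x)$ yields the graph representation $\mathcal W^{cs}(x,\omega) = \{\iota + l^{cs}(\iota,\omega,x):\iota \in X_{0cs}\}$ and, after tracking constants through the unstable integral alone, the announced Lipschitz bound $K_s$. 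Invariance of the foliation is inherited from the cocycle property applied to the difference $w$: a time-shift by $r$ converts $w$ into $v(\cdot,\theta_r\omega,v(r,\omega,\tilde x)) - v(\cdot,\theta_r\omega,v(r,\omega,x))$, whose $\mathcal C_{\eta_{cs}}$-membership is preserved, placing $v(r,\omega,\tilde x)$ on the leaf through $v(r,\omega,x)$.

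For the final intersection claim, any point $y \in \mathcal W^{cs}(x,\omega) \cap \mathcal M^{cu}(\omega)$ must satisfy both $\Pi_u y = l^{cs}(\Pi_{cs} y,\omega,x)$ and $\Pi_s y = h^{cu}(\Pi_{cu} y,\omega)$. Viewing $(\Pi_s y,\Pi_u y) \in X_{0s}\times X_{0u}$ as the unknown (with $\Pi_c y$ determined self-consistently through both graphs), the self-map obtained by substituting one equation into the other is Lipschitz with constants $K_u$ and $K_s$ on the two factors; composing the two half-steps produces contraction rate $K_u K_s < 1$, which delivers a unique fixed point and hence a unique intersection point. The main analytic obstacle is the interplay between the improper unstable integral $\int_t^{+\infty}\phi_{A_{0u}}(t,s)\Pi_u\Delta G(s)\,ds$, whose well-posedness in the $\mathcal C_{\eta_{cs}}$-norm rests precisely on the gap $\alpha > \eta_{cs}$ and on the cancellation of the boundary term at $+\infty$, and the Stieltjes-convolution bound of Proposition \ref{pro3.2} governing the stable part, which is where the non-dense-domain structure enforces the constant $C_\chi$ and dictates the form of the spectral gap condition.
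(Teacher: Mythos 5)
Your proposal captures the right Lyapunov--Perron skeleton --- the integral equation, the split into center/stable/unstable contributions, the contraction estimate reproducing $KL\left(C_\chi+\tfrac1{\eta_{cs}-\gamma}+\tfrac1{\alpha-\eta_{cs}}\right)<1$, the Lipschitz constant $K_s$ from the unstable integral, invariance via the cocycle property, and the $K_uK_s<1$ contraction for the intersection. All of that matches the paper. But there are two genuine gaps.

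First, and most seriously, you never engage with the role of condition \eqref{eq30}, which imposes the strict bound $KL\left(C_{\chi+i\sigma}+\tfrac1{\eta_{cs}-i\sigma-\gamma}+\tfrac1{\alpha-\eta_{cs}+i\sigma}\right)<\tfrac16$ for $i=1,2$. In the paper this is the workhorse of a separate step (Step~3) proving that $\overline\psi(\cdot,\omega,\iota,x)$ is \emph{jointly continuous in $(\iota,x)$}. You write that $\overline w$ ``depends measurably on $(\omega,\iota,x)$ via the standard Castaing-type argument used in the proof of Theorem~\ref{thm3.4}'', but that argument in Theorem~\ref{thm3.4} rests on Lipschitz dependence on the graph parameter alone; here there is an additional $x$-dependence entering through $v(s,\omega,x)$ inside $\Delta G$, and the contraction estimate gives you Lipschitz continuity only in $\iota$, not in $x$. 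In fact $v(\cdot,\omega,x)$ need not lie in $\mathcal C_{\eta_{cs}}$ at all, so one cannot estimate the $x$-increment by a weighted-norm Lipschitz bound; the paper instead splits each of the three integrals into a tail (controlled using the auxiliary weights $\eta_{cs}-\sigma$, $\eta_{cs}-2\sigma$ and the strengthened $\tfrac16$ bound, so that the resulting error terms can be absorbed after summing six pieces) and a compact-time part (controlled by local continuity of $v(\cdot,\omega,x)$ in $x$ on $[0,N]$). Without that argument your claim of measurability in $(\iota,\omega,x)$ is unsupported, and your proof does not explain why the stronger hypothesis \eqref{eq30} is needed at all.

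Second, you do not verify the ``disjoint or identical'' property of Definition~\ref{def4.1}(ii). The paper does this by combining the uniqueness of the intersection point in $\mathcal W^{cs}(x,\omega)\cap\mathcal M^{cu}(\omega)$ with \cite[Theorem~4.2]{KB2008} to conclude that $\operatorname{graph} l^{cs}(\cdot,\omega,x_1)=\operatorname{graph} l^{cs}(\cdot,\omega,x_2)$ whenever the two graphs intersect, so that the leaves $\{\operatorname{graph} l^{cs}(\cdot,\omega,\xi+h^{cu}(\xi,\omega)):\xi\in X_{0cu}\}$ really do foliate $X_0$. Your proposal produces a family of Lipschitz graphs through each $x$ but does not show they assemble into a foliation.
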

\begin{proof}We proceed it in five steps.

\noindent {\bf Step 1.} 
We prove that $\tilde{x}\in{\mathcal{W}^{cs}\left(x,\omega\right)}$ if and only if there exists a function $\psi\left(\cdot\right)\in{\mathcal{C}_{\eta_{cs}}\left([0,+\infty);X_0\right)}$ with $\psi\left(0\right)=\tilde{x}-x$ which satisfies
\begin{equation}\label{eq31}
\begin{split}
\psi\left(t\right)=&\phi_{A_{0c}}\left(t,0\right)\iota+\Pi_{0s}\left(S\diamond \Delta G_x\left(v,\psi\right)\right)\left(t\right)\\&+\int_0^t\phi_{A_{0c}}\left(t,s\right)\Pi_c \Delta G\left(\theta_s\omega,v\left(s,\omega,x\right),\psi\left(s\right)\right)ds\\&+\int_{+\infty}^{t}\phi_{A_{0u}}\left(t,s\right)\Pi_u \Delta G\left(\theta_s\omega,v\left(s,\omega,x\right),\psi\left(s\right)\right)ds
\end{split}	
\end{equation}
where $\iota=\Pi_{0cs}\left({\tilde{x}-x}\right)$ and 
\begin{equation*}	
	\Delta G\left(\theta_s\omega,v\left(s,\omega,x\right),\psi\left(s\right)\right)=G\left(\theta_s\omega,v\left(s,\omega,x\right)+\psi\left(s\right)\right)-G\left(\theta_s\omega,v\left(s,\omega,x\right)\right) ,
\end{equation*}
\begin{equation*}
	\left(S\diamond \Delta G_x\left(v,\psi\right)\right)\left(t\right)=\left(S\diamond G_x\left(v+\psi\right)\right)\left(t\right)-\left(S\diamond G_x\left(v\right)\right)\left(t\right),
\end{equation*}
To this aim, we let $\tilde{x}\in{\mathcal{W}^{cs}\left(x,\omega\right)}$ and assume that $v\in{\mathcal{C}_{\eta_{cs}}\left([0,+\infty);X_0\right)}$ is a solution of \eqref{eq4}. Let $\psi\left(t\right)=v\left(t,\omega,\tilde{x}\right)-v\left(t,\omega,x\right)$. By the variation of constants formula, one further gets 
\begin{equation*}\begin{split}
	\Pi_{0cs}v\left(t,\omega,x\right)&=\phi_{A_{0c}}\left(t,0\right)\Pi_{0cs}x+\Pi_{0s}\left(S\diamond{G_x\left(v\right)}\right)\left(t\right)\\&+\int_0^t\phi_{A_{0c}}\left(t,s\right)\Pi_c G\left(\theta_s\omega,v\left(s,\omega,x\right)\right)ds.
	\end{split}
\end{equation*}
 and for all $t,l\in\mathbb{R}$ with $t<l$, 
\begin{equation*}
\begin{split}
	\Pi_{0u}v\left(t\right)=\phi_{A_{0u}}\left(t,l\right)\Pi_{0u}v\left(l\right)+\int_l^t\phi_{A_{0s}}\left(t,s\right)\Pi_u G\left(\theta_s\omega,v\left(s\right)\right)ds.
\end{split}
\end{equation*}
Then we get that
\begin{equation}\label{eq32}
\begin{split}
	\Pi_{0cs}\psi\left(t\right)&=\phi_{A_{0c}}\left(t,0\right)\iota+\Pi_{0s}\left(S\diamond \Delta G_x\left(v,\psi\right)\right)\left(t\right)\\&+\int_0^t\phi_{A_{0c}}\left(t,s\right)\Pi_c \Delta G\left(\theta_s\omega,v\left(s,\omega,x\right),\psi\left(s\right)\right)ds.
	\end{split}
\end{equation}
\begin{equation*}
\begin{split}
	\Pi_{0u}\psi\left(t\right)&=\phi_{A_{0u}}\left(t,l\right)\Pi_{0u}\psi\left(l\right)+\int_l^{t}\phi_{A_{0s}}\left(t,s\right)\Pi_u \Delta G\left(\theta_s\omega,v\left(s,\omega,x\right),\psi\left(s\right)\right)ds.
	\end{split}
\end{equation*}
According to \eqref{eq7}, for $l>0$,
\begin{equation*}
	\begin{split}
	\left\Vert\phi_{A_{0u}}\left(t,l\right)\Pi_{0u}\psi\left(l\right)\right\Vert\leq&{K}e^{\alpha\left(t-l\right)+\int_l^tz\left(\theta_r\omega\right)dr}\left\Vert{\psi\left(l\right)}\right\Vert\\\leq&{K}e^{\alpha\left(t-l\right)-\eta_{cs}{l}+\int_0^t{z\left(\theta_r\omega\right)dr}}e^{\eta_{cs}{l}-\int_0^l{z\left(\theta_r\omega\right)dr}}\left\Vert{\psi\left(l\right)}\right\Vert\\\leq&{K}e^{\left(\eta_{cs}-\alpha\right)l-\alpha{t}+\int_0^t{z\left(\theta_r\omega\right)dr}}{\left\Vert{\psi}\right\Vert}_{\mathcal C_{\eta_{cs}}}.	
	\end{split}
\end{equation*}
Let $l\to+\infty$, since $\eta_{cs}-\alpha<0$, we have
\begin{equation}\label{eq33}
	\Pi_{0u}\psi\left(t\right)=\int_{+\infty}^{t}\phi_{A_{0u}}\left(t,s\right)\Pi_u \Delta G\left(\theta_s\omega,v\left(s,\omega,x\right),\psi\left(s\right)\right)ds.
\end{equation}
By summing up \eqref{eq32} and \eqref{eq32}, we get \eqref{eq31}. The converse can be obtained by a straight computation.

\noindent {\bf Step 2.} 
We claim that \eqref{eq32} has a unique solution in $\mathcal{C}_{\eta_{cs}}\left([0,+\infty);X_0\right)$ with initial condition $\Pi_{0cs}\psi\left(0,\omega,\iota,x\right)=\iota$ for all $\iota\in{X_{0cs}}$. 

To show this claim, we denote $\mathcal{Z}\left(\psi,\iota\right)$ the right side of \eqref{eq32}, i.e.
\begin{equation}\label{eq34}
\begin{split}
	\mathcal{Z}\left(\psi,\iota\right)=&\phi_{A_{0c}}\left(t,0\right)\iota+\Pi_{0s}\left(S\diamond \Delta G_x\left(v,\psi\right)\right)\left(t\right)\\&+\int_0^t\phi_{A_{0c}}\left(t,s\right)\Pi_c \Delta G\left(\theta_s\omega,v\left(s,\omega,x\right),\psi\left(s\right)\right)ds\\&+\int_{+\infty}^{t}\phi_{A_{0u}}\left(t,s\right)\Pi_u \Delta G\left(\theta_s\omega,v\left(s,\omega,x\right),\psi\left(s\right)\right)ds
\end{split}
\end{equation}
It needs to show that $\mathcal{Z}$ maps from $\mathcal{C}_{\eta_{cs}}\left([0,+\infty);X_0\right)\times{X_{0cs}}$ to $\mathcal{C}_{\eta_{cs}}\left([0,+\infty);X_0\right)$ and it is a uniform contraction. For $\psi,\tilde{\psi}\in{\mathcal{C}_{\eta_{cs}}\left([0,+\infty);X_0\right)}$, $-\beta<\chi<\eta_{cs}$,
\begin{equation*}
\begin{split}
	&{\left\Vert{\mathcal{Z}\left(\psi,\iota\right)-\mathcal{Z}\left(\tilde{\psi},\iota\right)}\right\Vert}_{\mathcal{C}_{\eta_{cs}}}=\underset{t\geq0}{\sup}~e^{-\eta_{cs}t-\int_0^tz\left(\theta_s\omega\right)ds}{\left\Vert{\mathcal{Z}\left(\psi,\iota\right)-\mathcal{Z}\left(\tilde{\psi},\iota\right)}\right\Vert}\\&\leq LC_\chi\sup_{s\in[0,t]}e^{\left(\chi-\eta_{cs}\right)\left(t-s\right)}\left\Vert \psi-\tilde \psi\right\Vert_{\mathcal C_{\eta_{cs}}}+KL\sup_{t\geq0}\int_0^te^{\left(\eta_{cs}-\gamma\right)\left(s-t\right)-\eta_{cs}s-\int_0^sz\left(\theta_r\omega\right)dr}\left\Vert \psi-\tilde \psi\right\Vert ds\\&+KL\sup_{t\geq0}\int_t^{+\infty}e^{\left(\eta_{cs}-\alpha\right)\left(s-t\right)-\eta_{cs}s-\int_0^sz\left(\theta_r\omega\right)dr}\left\Vert \psi-\tilde \psi\right\Vert ds\\&\leq KL\left(C_\chi+\frac{1}{\eta_{cs}-\gamma}+\frac{1}{\alpha-\eta_{cs}}\right)\left\Vert \psi-\tilde \psi\right\Vert_{\mathcal C_{\eta_{cs}}},
\end{split}
\end{equation*}
which implies that $\mathcal{Z}$ is a uniform contraction with respect to $\psi$. Furthermore, it is straightforward that $\mathcal{Z}\left(\cdot,\iota\right)$ is well defined from $\mathcal{C}_{\eta_{cs}}\left([0,+\infty);X_0\right)\times{X_{0cs}}$ to $\mathcal{C}_{\eta_{cs}}\left([0,+\infty);X_0\right)$ by setting $\tilde{\psi}=0$. From the  contraction mapping principle, for any given $\iota\in{X_{0cs}}$, $\mathcal{Z}\left(\cdot,\iota\right)$ has a unique fixed point $\overline{\psi}\in{\mathcal{C}_{\eta_{cs}}\left([0,+\infty);X_0\right)}$. That is, for each $t\geq0$, $\overline{\psi}\left(t,\omega,\iota,x\right)$ is a unique solution to \eqref{eq34} and 
\begin{equation*}
	\mathcal{Z}\left(\overline{\psi},\iota\right)=\overline{\psi}\left(t,\omega,\iota,x\right).
\end{equation*} 
Also, for $\forall{\iota_1,\iota_2\in{X_{0cs}}}$, 
\begin{equation*}	
\begin{split}
	&{\left\Vert{\mathcal{Z}\left(\overline{\psi},\iota_1\right)-\mathcal{Z}\left(\overline{\psi},\iota_2\right)}\right\Vert}_{\mathcal{C}_{\eta_{cs}}}\\&\quad={\left\Vert{\overline{\psi}\left(t,\omega,\iota_1,x\right)-\overline{\psi}\left(t,\omega,\iota_2,x\right)}\right\Vert}_{\mathcal{C}_{\eta_{cs}}}\\&\quad\leq{K}\left\Vert{\iota_1-\iota_2}\right\Vert+KL\left(C_\chi+\frac{1}{\eta_{cs}-\gamma}+\frac{1}{\alpha-\eta_{cs}}\right){\left\Vert{\overline{\psi}\left(t,\omega,\iota_1,x\right)-\overline{\psi}\left(t,\omega,\iota_2,x\right)}\right\Vert}_{\mathcal{C}_{\eta_{cs}}}.
\end{split}
\end{equation*}  
Thus
\begin{equation}\label{eq35}
	{\left\Vert{\overline{\psi}\left(t,\omega,\iota_1,x\right)-\overline{\psi}\left(t,\omega,\iota_2,x\right)}\right\Vert}_{\mathcal{C}_{\eta_{cs}}}\leq\frac{K}{1-KL\left(C_\chi+\frac1{\eta_{cs}-\gamma}+\frac1{\alpha-\eta_{cs}}\right)}\left\Vert{\iota_1-\iota_2}\right\Vert,
\end{equation}
So $\overline{\psi}\left(t,\omega,\cdot,x\right)$ is Lipschitz continuous from $X_{0cu}$ to $\mathcal{C}_{\eta_{cs}}\left([0,+\infty);X_0\right)$. 

\noindent {\bf Step 3.} We prove that $\overline\psi\left(\cdot,\omega,\iota,x\right)$ is continuous with respect to $\left(\iota,x\right)$. By \eqref{eq30}, there is a fixed point in $\mathcal C_{\eta_{cs}-\sigma}$ and \eqref{eq35} holds with $\eta_{cs}$ replaced by $\eta_{cs}-\sigma$. Then we have
\begin{equation*}
	{\left\Vert{\overline{\psi}\left(t,\omega,\iota_1,x\right)-\overline{\psi}\left(t,\omega,\iota_2,x\right)}\right\Vert}_{\mathcal{C}_{\eta_{cs}-\sigma}}\leq\frac65K\Vert\iota_1-\iota_2\Vert~\text{and}~\Vert\overline\psi\left(t,\omega,\iota_1,x\right)\Vert\leq\frac65K\Vert\iota_1\Vert,
\end{equation*}
For $\left(\iota_1,x_1\right)$, $\left(\iota_2,x_2\right)\in X_{0cs}\times X_0$, there exists $\overline\psi\left(\cdot,\omega,\iota_1,x_1\right),\overline\psi\left(\cdot,\omega,\iota_2,x_2\right)\in\mathcal C_{\eta_{cs}-\sigma}\subset\mathcal C_{\eta_{cs}}$. For $t\geq0$,
\begin{equation*}\begin{split}
	&\overline\psi\left(\cdot,\omega,\iota_1,x_1\right)-\overline\psi\left(\cdot,\omega,\iota_2,x_2\right)=\phi_{A_{0c}}\left(t,0\right)\left(\iota_1-\iota_2\right)+\int_0^t\phi_{A_{0c}}\left(t,s\right)\Pi_c \Delta \mathcal G\left(s,\omega\right)ds\\&+\int_{+\infty}^t\phi_{A_{0u}}\left(t,s\right)\Pi_u \Delta \mathcal G\left(s,\omega\right)ds+\lim_{\lambda\to+\infty}\int_{0}^{t}\phi_{A_{0s}}\left(t,s\right)\lambda\left(\lambda I-A_s\right)^{-1}\Pi_s \Delta \mathcal G\left(s,\omega\right)ds
	\end{split}
\end{equation*} 
where
\begin{equation*}
	\Delta \mathcal G\left(s,\omega\right)=\Delta G\left(\theta_s\omega,v\left(s,\omega,x_1\right),\overline\psi\left(s,\omega,\iota_1,x_1\right)\right)-\Delta G\left(\theta_s\omega,v\left(s,\omega,x_2\right),\overline\psi\left(s,\omega,\iota_2,x_2\right)\right).
\end{equation*}
We prove that $\Vert\overline\psi\left(\cdot,\omega,\iota_1,x_1\right)-\overline\psi\left(\cdot,\omega,\iota_2,x_2\right)\Vert_{\mathcal C_{\eta_{cs}}}=o\left(1\right)$ as $\left(\iota_1,x_1\right)\to\left(\iota_2,x_2\right)$. Set for $t> N_1$,
\begin{equation*}
	\mathcal E_1=e^{-\eta_{cs}t-\int_0^tz\left(\theta_r\omega\right)dr}\int_{N_1}^t\phi_{A_{0c}}\left(t,s\right)\Pi_c\Delta \mathcal G\left(s,\omega\right)ds,
\end{equation*}
and $\mathcal E_1=0$ for $t\leq N_1$; For $t> N_1$,
\begin{equation*}
	\mathcal E_2=e^{-\eta_{cs}t-\int_0^tz\left(\theta_r\omega\right)dr}\int_0^{N_1}\phi_{A_{0c}}\left(t,s\right)\Pi_c\Delta \mathcal G\left(s,\omega\right)ds,
\end{equation*}
and change $N_1$ to $t$ if $t\leq N_1$. For $t>N_2$,
\begin{equation*}
	\mathcal E_3=-e^{-\eta_{cs}t-\int_0^tz\left(\theta_r\omega\right)dr}\int_{N_2}^t\phi_{A_{0u}}\left(t,s\right)\Pi_c\Delta \mathcal G\left(s,\omega\right)ds,
\end{equation*}  
and $\mathcal E_3=0$ for $t\leq N_2$; For $t> N_2$,
\begin{equation*}
	\mathcal E_4=-e^{-\eta_{cs}t-\int_0^tz\left(\theta_r\omega\right)dr}\int_{-\infty}^{N_2}\phi_{A_{0u}}\left(t,s\right)\Pi_c\Delta \mathcal G\left(s,\omega\right)ds,
\end{equation*}
and change $N_2$ to $t$ if $t\leq N_2$. For $t>N_3$,
\begin{equation*}
	\mathcal E_5=e^{-\eta_{cs}t-\int_0^tz\left(\theta_r\omega\right)dr}\lim_{\lambda\to+\infty}\int_{N_3}^t\phi_{A_{0s}}\left(t,s\right)\lambda\left(\lambda I-A_s\right)^{-1}\Pi_s\Delta \mathcal G\left(s,\omega\right)ds,
\end{equation*}  
and $\mathcal E_5=0$ for $t\leq N_3$; For $t> N_3$,
\begin{equation*}
	\mathcal E_6=e^{-\eta_{cs}t-\int_0^tz\left(\theta_r\omega\right)dr}\lim_{\lambda\to+\infty}\int_{0}^{N_3}\phi_{A_{0s}}\left(t,s\right)\lambda\left(\lambda I-A_s\right)^{-1}\Pi_s\Delta \mathcal G\left(s,\omega\right)ds,
\end{equation*}
and change $N_3$ to $t$ if $t\leq N_3$. $N_1$, $N_2$, $N_3$ above are large positive numbers to be chosen later. For $t>N_1$, and choose $\Vert\iota_1-\iota_2\Vert\leq1$,
\begin{equation*}\begin{split}
	\vert\mathcal E_1\vert\leq& KL\int_{N_1}^te^{\left(\eta_{cs}-\sigma-\gamma\right)\left(s-t\right)}e^{-\sigma t}ds[\Vert \overline\psi\left(\cdot,\omega,\iota_1,x_1\right)\Vert_{\mathcal C_{\eta_{cs}-\sigma}}+\Vert\overline\psi\left(\cdot,\omega,\iota_2,x_2\right)\Vert_{\mathcal C_{\eta_{cs}-\sigma}}]\\&\leq \frac{2K^2Le^{-\sigma N_1}}{1-KL\left(C_{\chi+\sigma}+\frac1{\eta_{cs}-\sigma-\gamma}+\frac1{\alpha-\eta_{cs}+\sigma}\right)}\left(2\left\Vert{\iota_1}\right\Vert+1\right)\\&\leq\frac{12}{5}KLe^{-\sigma N_1}\left(2\left\Vert{\iota_1}\right\Vert+1\right).
\end{split}
\end{equation*}
For given $\varepsilon>0$, choose $N_1$ so positive that
\begin{equation}\label{eq36}
	\sup_{t\geq0}\vert\mathcal E_1\vert\leq\frac{\varepsilon}{14}.
\end{equation}
For such $N_1$, for $t>N_1$,
\begin{equation*}
	\begin{split}
	&	\vert\mathcal E_2\vert\leq e^{-\eta_{cs}t}\int_0^{N_1}Ke^{\gamma\left(t-s\right)-\int_0^sz\left(\theta_r\omega\right)dr}\Big[\vert G\left(\theta_s\omega,v\left(s,\omega,x_1\right)+\overline\psi\left(s,\omega,\iota_1,x_1\right)\right)\\&-G\left(\theta_s\omega,v\left(s,\omega,x_2\right)+\overline\psi\left(s,\omega,\iota_2,x_2\right)\right)\vert+\vert G\left(\theta_s\omega,v\left(s,\omega,x_1\right)\right)-G\left(\theta_s\omega,v\left(s,\omega,x_2\right)\right)\vert\Big]ds\\&\leq KL\int_0^{N_1}e^{\left(\eta_{cs}-\gamma\right)\left(s-t\right)}\Big[\Vert\overline\psi\left(\cdot,\omega,\iota_1,x_1\right)-\overline\psi\left(\cdot,\omega,\iota_2,x_2\right)\Vert_{\mathcal C_{\eta_{cs}}}+2\Vert v\left(\cdot,\omega,x_1\right)-v\left(\cdot,\omega,x_2\right)\Vert_{\mathcal C_{\eta_{cs}}}\Big]ds.
	\end{split}
\end{equation*}
Since $v\left(s,\omega,x\right)$ is a solution of \eqref{eq4} which is continuous in $x$, there exists $\delta_1>0$ such that if $\left\Vert x_1-x_2\right\Vert<\delta_1$, we have
\begin{equation*}
	\sup_{0\leq t\leq N_1}\Vert v\left(t,\omega,x_1\right)-v\left(t,\omega,x_2\right)\Vert\leq\frac{\varepsilon}{14}\cdot\frac{\eta_{cs}-\gamma}{2KLe^{\left(\eta_{cs}-\gamma\right)N_1}}.
\end{equation*}
Then we get that if $t>N_1$ and $\left\Vert x_1-x_2\right\Vert<\delta_1$, we have
\begin{equation}\label{eq37}
	\vert\mathcal E_2\vert\leq\frac16\Vert\overline\psi\left(\cdot,\omega,\iota_1,x_1\right)-\overline\psi\left(\cdot,\omega,\iota_2,x_2\right)\Vert_{\mathcal C_{\eta_{cs}}}+\frac{\varepsilon}{14}.
\end{equation}
If $t\leq N_1$, choose the same $\delta_1$ and if $\left\Vert x_1-x_2\right\Vert<\delta_1$, we have
\begin{equation*}
\begin{split}
	\sup_{0\leq s\leq t}\Vert v\left(s,\omega,x_1\right)-v\left(s,\omega,x_2\right)\Vert&\leq\sup_{0\leq t\leq N_1}\Vert v\left(t,\omega,x_1\right)-v\left(t,\omega,x_2\right)\Vert\\&\leq\frac{\varepsilon}{14}\cdot\frac{\eta_{cs}-\gamma}{2KLe^{\left(\eta_{cs}-\gamma\right)N_1}}\leq\frac{\varepsilon}{14}\cdot\frac{\eta_{cs}-\gamma}{2KL\left(1-e^{-\left(\eta_{cs}-\gamma\right)t}\right)},
	\end{split}
\end{equation*} 
still yielding \eqref{eq37}. Similarly, we can choose $N_2$ so positive that
\begin{equation}\label{eq38}
	\sup_{t\geq0}\vert\mathcal E_3\vert\leq\frac{\varepsilon}{14}.
\end{equation}
For such $N_2$, we can choose a $\delta_2>0$ so that if $\Vert x_1-x_2\Vert\leq\delta_2$,
\begin{equation}\label{eq39}
	\vert\mathcal E_4\vert\leq\frac16\Vert\overline\psi\left(\cdot,\omega,\iota_1,x_1\right)-\overline\psi\left(\cdot,\omega,\iota_2,x_2\right)\Vert_{\mathcal C_{\eta_{cs}}}+\frac{\varepsilon}{14}.
\end{equation}
For $t>N_3$,and choose $\Vert\iota_1-\iota_2\Vert<1$, \begin{equation*}
	\begin{split}
		&\left\vert{\mathcal E_5}\right\vert=e^{-\eta_{cs}t-\int_{0}^tz\left(\theta_r\omega\right)dr}\bigg\vert\lim_{\lambda\to+\infty}\int_{0}^{t-N_3}\phi_{A_{0s}}\left(t,l+N_3\right)\lambda{\left(\lambda I-A_s\right)^{-1}}\Pi_s\Delta\mathcal G\left(l+N_3\right)dl\bigg\vert\\&\leq LC_{\chi+\sigma}\underset{h\in[N_3,t]}{\sup}e^{\left(\chi+\sigma\right)\left(t-h\right)}e^{-\eta_{cs}t}e^{\left(\eta_{cs}-2\sigma\right)h}[\Vert \overline\psi\left(h,\omega,\iota_1,x_1\right)\Vert_{\mathcal C_{\eta_{cs}-2\sigma}}+\Vert\overline\psi\left(h,\omega,\iota_2,x_2\right)\Vert_{\mathcal C_{\eta_{cs}-2\sigma}}]\\&\leq LC_{\chi+\sigma}\underset{h\in[N_3,t]}{\sup}e^{\left(\chi+2\sigma-\eta_{cs}\right)\left(t-h\right)}e^{-\sigma t}[\Vert \overline\psi\left(\cdot,\omega,\iota_1,x_1\right)\Vert_{\mathcal C_{\eta_{cs}-2\sigma}}+\Vert\overline\psi\left(\cdot,\omega,\iota_2,x_2\right)\Vert_{\mathcal C_{\eta_{cs}-2\sigma}}]\\&\leq \frac{2KLC_{\chi+\sigma}e^{-\sigma N_3}}{1-KL\left(C_{\chi+2\sigma}+\frac1{\eta_{cs}-2\sigma-\gamma}+\frac1{\alpha-\eta_{cs}+2\sigma}\right)}\left(2\left\Vert{\iota_1}\right\Vert+1\right)\\&\leq\frac{12}{5}LC_{\chi+\sigma}e^{-\sigma N_3}\left(2\left\Vert{\iota_1}\right\Vert+1\right).	\end{split}
\end{equation*}
For given $\varepsilon>0$, choose $N_3$ so positive that
\begin{equation}\label{eq40}
	\sup_{t\geq0}\vert\mathcal E_5\vert\leq\frac{\varepsilon}{14}.
\end{equation}
Fix such $N_3$, for $t>N_3$,
\begin{equation*}
	\begin{split}
	&	\left\vert{\mathcal E_6}\right\vert\leq LC_{\chi+\sigma}\underset{s\in[0,N_3]}{\sup}e^{\left(\chi+\sigma-\eta_{cs}\right)\left(t-s\right)}\Big[\vert G\left(\theta_s\omega,v\left(s,\omega,x_1\right)+\overline\psi\left(s,\omega,\iota_1,x_1\right)\right)\\&-G\left(\theta_s\omega,v\left(s,\omega,x_2\right)+\overline\psi\left(s,\omega,\iota_2,x_2\right)\right)\vert+\vert G\left(\theta_s\omega,v\left(s,\omega,x_1\right)\right)-G\left(\theta_s\omega,v\left(s,\omega,x_2\right)\right)\vert\Big]\\&\leq KLC_{\chi+\sigma}\Big[\Vert\overline\psi\left(\cdot,\omega,\iota_1,x_1\right)-\overline\psi\left(\cdot,\omega,\iota_2,x_2\right)\Vert_{\mathcal C_{\eta_{cs}}}+2\Vert v\left(\cdot,\omega,x_1\right)-v\left(\cdot,\omega,x_2\right)\Vert_{\mathcal C_{\eta_{cs}}}\Big].	\end{split}
\end{equation*}
There exists $\delta_3>0$ such that if $\Vert x_1-x_2\Vert<\delta_3$, we have
\begin{equation*}
	\sup_{0\leq t\leq N_3}\Vert v\left(t,\omega,x_1\right)-v\left(t,\omega,x_2\right)\Vert\leq\frac{\varepsilon}{14}\cdot\frac{1}{2KLC_{\chi+\sigma}}.
\end{equation*}
Then we get that if $t>N_3$ and $\left\Vert x_1-x_2\right\Vert<\delta_3$, we have
\begin{equation}\label{eq41}
	\vert\mathcal E_6\vert\leq\frac16\Vert\overline\psi\left(\cdot,\omega,\iota_1,x_1\right)-\overline\psi\left(\cdot,\omega,\iota_2,x_2\right)\Vert_{\mathcal C_{\eta_{cs}}}+\frac{\varepsilon}{14}.
\end{equation}
If $t\leq N_3$, choose the same $\delta_3$ and if $\left\Vert x_1-x_2\right\Vert<\delta_3$, we have
\begin{equation*}
\begin{split}
	\sup_{0\leq s\leq t}\Vert v\left(s,\omega,x_1\right)-v\left(s,\omega,x_2\right)\Vert&\leq\sup_{0\leq t\leq N_3}\Vert v\left(t,\omega,x_1\right)-v\left(t,\omega,x_2\right)\Vert\leq\frac{\varepsilon}{14}\cdot\frac{1}{2KLC_{\chi+\sigma}},
	\end{split}
\end{equation*} 
still yielding \eqref{eq41}. And there exists $\delta_0>0$, such that if $\Vert\iota_1-\iota_2\Vert<\delta_0$,
\begin{equation}\label{eq42}
	\sup_{t\geq0}e^{-\eta_{cs}t-\int_0^tz\left(\theta_r\omega\right)dr}\Vert\phi_{A_{0c}}\left(t,0\right)\left(\iota_1-\iota_2\right)\Vert\leq\sup_{t\geq0}e^{\left(\gamma-\eta_{cs}\right)t}\Vert\iota_1-\iota_2\Vert\leq\frac{\varepsilon}{14}.
\end{equation}
Take $\delta=\min\{\delta_0,\delta_1,\delta_2,\delta_3,1\}$, from \eqref{eq36}-\eqref{eq42}, we have if $\Vert x_1-x_2\Vert<\delta$, $\Vert\iota_1-\iota_2\Vert<\delta$, then 
\begin{equation*}
	\Vert\overline\psi\left(\cdot,\omega,\iota_1,x_1\right)-\overline\psi\left(\cdot,\omega,\iota_2,x_2\right)\Vert_{\mathcal C_{\eta_{cs}}}\leq\frac12\Vert\overline\psi\left(\cdot,\omega,\iota_1,x_1\right)-\overline\psi\left(\cdot,\omega,\iota_2,x_2\right)\Vert_{\mathcal C_{\eta_{cs}}}+\frac{\varepsilon}{2},
\end{equation*}
which means
\begin{equation*}
	\Vert\overline\psi\left(\cdot,\omega,\iota_1,x_1\right)-\overline\psi\left(\cdot,\omega,\iota_2,x_2\right)\Vert_{\mathcal C_{\eta_{cs}}}\leq\varepsilon.
\end{equation*}
Therefore $\overline\psi\left(\cdot,\omega,\iota,x\right)$ is continuous with respect to $\left(\iota,x\right)$. Since $\overline{\psi}\left(\cdot,\omega,\iota,x\right)$ can be an $\omega$-wise limit of the iteration of a contraction mapping $\mathcal{Z}$ starting at 0 and $\mathcal{Z}$  maps a measurable function to a measurable function, $\overline{\psi}\left(\cdot,\omega,\iota,x\right)$ is measurable. By \cite[Lemma III.14]{Cas1977}, $\overline{\psi}$ is measurable with respect to $\left(\omega, \iota,x\right)$.

\noindent {\bf Step 3.} We will prove that the center-stable foliation is given by the graph of a Lipschitz continuous map. 

Let $l^{cs}\left(\iota,\omega,x\right)=\Pi_{0u}x+\Pi_u{\overline{\psi}\left(0,\omega,\iota-\Pi_{0cs}x,x\right)}$, then by \eqref{eq33}, 
\begin{equation*}
\begin{split}
	l^{cs}\left(\iota,\omega,x\right)=&\Pi_{0u}x-\int_{0}^{+\infty}\phi_{A_{0u}}\left(0,s\right)\Pi_u\Delta G\left(\theta_s\omega,v\left(s\right),\overline\psi\left(s\right)\right)ds.\end{split}	
\end{equation*}
Thus $l^{cs}\left(\iota,\omega,x\right)$ is measurable in $\left(\iota,\omega,x\right)$. Indeed, by \eqref{eq35}, it follows that
\begin{equation*}
	\left\Vert{l^{cu}\left(\iota_1,\omega,x\right)-l^{cu}\left(\iota_2,\omega,x\right)}\right\Vert\leq\frac{K^2L}{\left(\alpha-\eta_{cs}\right)\left(1-KL\left(C_\chi+\frac1{\eta_{cs}-\gamma}+\frac1{\alpha-\eta_{cs}}\right)\right)}\left\Vert{\iota_1-\iota_2}\right\Vert.
\end{equation*}
From the definition of $l^{cu}$ and the claim in {\bf Step 1}, it follows that $\tilde{x}\in{\mathcal{W}^{cs}\left(x,\omega\right)}$ if and only if there exists a $\iota\in{X_{0cs}}$ such that
\begin{equation*}
	\tilde{x}=x+\iota+\Pi_u\overline\psi\left(0,\omega,\iota,x\right).
\end{equation*}
Let $y=\Pi_{0cs}x+\iota$, then 
\begin{equation}\label{eq43}
	\tilde{x}=\Pi_{0u}x+y+\Pi_u\overline\psi\left(0,\omega,y-\Pi_{0cs}x,x\right).
\end{equation}
 replacing $y$ by $\iota$ in \eqref{eq43}, we get
 \begin{equation*}
 	\tilde{x}=\iota+l^{cs}\left(\iota,\omega,x\right).
 \end{equation*}
 Therefore, we have
\begin{equation*}
	\mathcal{W}^{cs}\left(x,\omega\right)=\{\iota+l^{cs}\left(\iota,\omega,x\right):\iota\in X_{0cs}\}.
\end{equation*}
\noindent {\bf Step 4.}We prove that $\mathcal W^{cs}\left(x,\omega\right)\cap\mathcal M^{cu}\left(\omega\right)$ contains a unique point.

If $\tilde x\in\mathcal W^{cs}\left(x,\omega\right)\cap\mathcal M^{cu}\left(\omega\right) $, there exists $\xi\in X_{0cu}$ and $\iota\in X_{0cs}$ such that 
\begin{equation*}
	\tilde x=\xi+h^{cu}\left(\xi,\omega\right)=\iota+l^{cs}\left(\iota,\omega,x\right),
\end{equation*}
which implies 
\begin{equation}\label{eq44}
	\xi=l^{cs}\left(\iota,\omega,x\right)~\text{and}~\iota=h^{cu}\left(\xi,\omega\right).
\end{equation}
By the Lipschitz continuity of $h^{cu}$ and $l^{cs}$ and $K_uK_s<1$, the equation
\begin{equation*}
	\iota=h^{cu}\left(l^{cs}\left(\iota,\omega,x\right),\omega\right)
\end{equation*}
has a unique solution. Thus $\mathcal W^{cs}\left(x,\omega\right)\cap\mathcal M^{cu}\left(\omega\right)$ contains a unique point. By \cite[Theorem 4.2]{KB2008} and \eqref{eq44}, we have 
\begin{equation*}
	\text{graph}~l^{cs}\left(\cdot,\omega,x\right)=\text{graph}~l^{cs}\left(\cdot,\omega,\xi\left(x\right)+h^{cu}\left(\xi\left(x\right),\omega\right)\right),
\end{equation*}
and $\text{graph}~l^{cs}\left(\cdot,\omega,x_1\right)=\text{graph}~l^{cs}\left(\cdot,\omega,x_2\right)$ if $\text{graph}~l^{cs}\left(\cdot,\omega,x_1\right)\cap\text{graph}~l^{cs}\left(\cdot,\omega,x_2\right)\neq\emptyset$. Hence $\cup_{\xi\in X_{0cu}}\text{graph}~l^{cs}\left(\iota,\omega,\xi+h^{cu}\left(\omega,\xi\right)\right)$ is a Lipschitz foliation.

\noindent {\bf Step 5.}
Finally, we prove $\mathcal{W}^{cs}\left(x,\omega\right)$ is invariant. 

Following Definition \ref{def2.2}, we need to show that for each $\tilde{x}\in{\mathcal{W}^{cs}\left(x,\omega\right)}$, $v\left(r,\omega,\tilde{x}\right)\in{\mathcal{W}^{cs}\left(v\left(r,\omega,x\right),\theta_r\omega\right)}$ for all $r\geq0$. By the cocycle property, for any $\nu\in X_0$, $v\left(t+r,\omega,\nu\right)=v\left(t,\theta_r\omega,v\left(r,\omega,\nu\right)\right)$. $v\left(t+r,\omega,\tilde{x}\right)-v\left(t+r,\omega,x\right)\in{\mathcal{C}_{\eta_{cs}}\left([0,+\infty);X_0\right)}$, we have $v\left(t,\theta_r\omega,v\left(r,\omega,\tilde{x}\right)\right)-v\left(t,\theta_r\omega,v\left(r,\omega,x\right)\right)\in{\mathcal{C}_{\eta_{cs}}\left([0,+\infty);X_0\right)}$. By the definition of $\mathcal{W}^{cs}\left(x,\omega\right)$, we have $v\left(r,\omega,\tilde{x}\right)\in{\mathcal{W}^{cs}\left(v\left(r,\omega,x\right),\theta_r\omega\right)}$.

\end{proof}
\begin{theorem}\label{th3.4}
Let $v\left(t,\omega,x\right)$ be a solution of \eqref{eq4} and $u\left(t,\omega,x\right)$ be the solution of \eqref{eq1}, then $\mathcal{W}^{cs*}\left(x,\omega\right)=\Xi^{-1}\left(\mathcal{W}^{cs}\left(x,\omega\right),\omega\right)$ is a leaf of invariant center-stable foliations for the random dynamical system generated by \eqref{eq1}.\end{theorem}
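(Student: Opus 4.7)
The plan is to mirror the proof of Theorem \ref{thm3.5}: use the random conjugacy $\Xi$ to push the center-stable foliation $\{\mathcal W^{cs}(x,\omega)\}$ of \eqref{eq4} constructed in Theorem \ref{thm4.2} to a center-stable foliation of \eqref{eq1}. There are essentially two things to check, namely that $\mathcal W^{cs*}(x,\omega) = \Xi^{-1}(\omega, \mathcal W^{cs}(x,\omega))$ is invariant under the RDS $\hat\Phi$ generated by \eqref{eq1}, and that each leaf is still given by the graph of a Lipschitz function over $X_{0cs}$.

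First I would verify invariance. By Lemma \ref{lem3.3}(i), the two cocycles are related through $\hat\Phi(t,\omega,\cdot) = \Xi^{-1}(\theta_t\omega, \Phi(t,\omega, \Xi(\omega,\cdot)))$, and $\Xi(\omega,\cdot)$ and $\Xi^{-1}(\omega,\cdot)$ are mutual inverses. Applying Step 5 of Theorem \ref{thm4.2} then gives
\begin{equation*}
\hat\Phi(t,\omega, \mathcal W^{cs*}(x,\omega)) = \Xi^{-1}\!\left(\theta_t\omega, \Phi(t,\omega, \mathcal W^{cs}(x,\omega))\right) \subset \Xi^{-1}\!\left(\theta_t\omega, \mathcal W^{cs}(v(t,\omega,x),\theta_t\omega)\right) = \mathcal W^{cs*}(v(t,\omega,x),\theta_t\omega),
\end{equation*}
which is exactly the invariance property in Definition \ref{def4.1}, read through the conjugacy that identifies the base point $v(t,\omega,x)$ with $\hat u(t,\omega,\Xi^{-1}(\omega,x))$ in $u$-coordinates.

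Second, since $\Xi^{-1}(\omega,y) = e^{z(\omega)} y$ is multiplication by a positive scalar, it commutes with the projections $\Pi_{0cs}$ and $\Pi_{0u}$ and preserves the splitting $X_0 = X_{0cs} \oplus X_{0u}$. Starting from $\mathcal W^{cs}(x,\omega) = \{\iota + l^{cs}(\iota,\omega,x) : \iota \in X_{0cs}\}$ and substituting $\tilde\iota = e^{z(\omega)}\iota \in X_{0cs}$, I obtain
\begin{equation*}
\mathcal W^{cs*}(x,\omega) = \{\tilde\iota + e^{z(\omega)} l^{cs}(e^{-z(\omega)}\tilde\iota, \omega, x) : \tilde\iota \in X_{0cs}\},
\end{equation*}
so $\mathcal W^{cs*}(x,\omega)$ is the graph of $l^{cs*}(\iota,\omega,x) := e^{z(\omega)} l^{cs}(e^{-z(\omega)}\iota, \omega, x)$ over $X_{0cs}$, which is Lipschitz in $\iota$ with the same constant $K_s$ as $l^{cs}(\cdot,\omega,x)$, and measurable in $(\iota,\omega,x)$ since $l^{cs}$ and $z(\omega)$ are.

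There is no serious analytic obstacle here, as all the hard estimates were already absorbed into Theorem \ref{thm4.2}; the argument is a direct pushforward through the fiberwise scalar diffeomorphism $\Xi$. The only bookkeeping care required is to confirm that the base-point labelling is consistent between the two pictures (so that the leaf through an $x \in X_0$ in $u$-coordinates corresponds to the leaf through $\Xi(x,\omega)$ in $v$-coordinates), which is immediate from $\Xi$ being a bijection on each fiber.
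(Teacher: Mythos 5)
Your proposal is correct and mirrors the paper's own argument, which is itself a direct transplant of the Theorem \ref{thm3.5} computation: push the foliation of \eqref{eq4} through the fiberwise scalar conjugacy $\Xi^{-1}$, use the cocycle relation from Lemma \ref{lem3.3}(i) together with Step 5 of Theorem \ref{thm4.2} for invariance, and read off the graph of $l^{cs*}(\iota,\omega,x)=e^{z(\omega)}l^{cs}(e^{-z(\omega)}\iota,\omega,x)$ with the same Lipschitz constant. Your base-point bookkeeping $\mathcal W^{cs}(v(t,\omega,x),\theta_t\omega)$ is in fact more careful than the paper's displayed chain (which writes $\mathcal W^{cs}(x,\theta_t\omega)$, apparently a slip).

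The one substantive point you omit, and that the paper does make, is that one should check that $\mathcal W^{cs*}$ still carries the dynamical meaning of a center-stable leaf in $u$-coordinates: for $x_i=e^{z(\omega)}y_i$ with $y_i$ on the same $v$-leaf, one has $u(t,\omega,x_1)-u(t,\omega,x_2)=e^{z(\theta_t\omega)}\bigl(v(t,\omega,y_1)-v(t,\omega,y_2)\bigr)$, and the tempered/sublinear growth of $t\mapsto z(\theta_t\omega)$ from Lemma \ref{lemma2.1} is what guarantees the scalar factor cannot destroy the $\mathcal C_{\eta_{cs}}$-exponential tracking. Without this remark, $\mathcal W^{cs*}$ is merely the set-theoretic image of a foliation, not visibly a center-stable foliation for \eqref{eq1}. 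It is a one-line addition, but it is the piece of content that actually uses a property of the Ornstein--Uhlenbeck process rather than just the bijectivity of $\Xi$.
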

\begin{proof}
In fact, it follows from Lemma \ref{lem3.3} that
\begin{equation*}
\begin{split}
	u\left(t,\omega,\mathcal{W}^{cs*}\left(x,\omega\right)\right)&=\Xi^{-1}\left(\theta_t\omega,v\left(t,\omega,\Xi\left(\omega,\mathcal{W}^{cs*}\left(x,\omega\right)\right)\right)\right)\\&=\Xi^{-1}\left(\theta_t\omega,v\left(t,\omega,\mathcal{W}^{cs}\left(x,\omega\right)\right)\right)\subset{\Xi}^{-1}\left(\theta_t\omega,\mathcal{W}^{cs}\left(x,\theta_t\omega\right)\right)\\&=\mathcal{W}^{cs*}\left(x,\theta_t\omega\right).
\end{split}
\end{equation*}	
So $\mathcal{W}^{cs*}\left({x,\omega}\right)$ is invariant. Note that for any $x_i\in \mathcal{W}^{cs*}\left({x,\omega}\right)$, there exists $y_i\in \mathcal{W}^{cs}\left({x,\omega}\right)$ such that $x_i=e^{z\left(\omega\right)}y_i$ for $i=1,2$. Then $u\left(t,\omega,x_1\right)-u\left(t,\omega,x_2\right)=e^{z\left(\omega\right)}\left(v\left(t,\omega,y_1\right)-v\left(t,\omega,y_2\right)\right)$. From Lemma \ref{lemma2.1}, we have that $t\mapsto z\left(\omega\right)$ has a sublinear growth rate, thus the transform $\Xi^{-1}$ does not change the exponential convergence of solutions starting at the same leaf.  Moreover, for $t\geq0$,
\begin{equation*}
	\begin{split}
		\mathcal{W}^{cs*}\left(x,\omega\right)&=\Xi^{-1}\left(\omega,\mathcal{W}^{cs}\right)\\&=\left\{u_0=\Xi^{-1}\left(\omega,\iota+l^{cs}\left(\iota,\omega,x\right)\right):\iota\in{X_{0cs}}\right\}\\&=\left\{u_0=e^{z\left(\theta_t\omega\right)}\left(\iota+l^{cs}\left(\iota,\omega,x\right)\right):\iota\in{X_{0cs}}\right\}\\&=\left\{u_0=\iota+e^{z\left(\theta_t\omega\right)}l^{cs}\left(e^{-z\left(\theta_t\omega\right)}\iota,\omega,x\right):\iota\in{X_{0cs}}\right\}.
	\end{split}
\end{equation*}
 Therefore, $\mathcal{W}^{cs*}\left(x,\omega\right)$ is a Lipschitz center invariant foliation given by the graph of a Lipschitz continuous function $l^{cs*}\left(\iota,\omega,x\right)=e^{z\left(\theta_t\omega\right)}l^{cs}\left(e^{-z\left(\theta_t\omega\right)}\iota,\omega,x\right)$ over $X_{0cs}$.
\end{proof}

By similar arguments, we could get the existence of a center-unstable foliation $\mathcal W^{cu*}\left(x,\omega\right)$ of \eqref{eq1}. Then there is a center foliation $\mathcal W^c\left(x,\omega\right)$, which is obtained by an intersection of the other two: $\mathcal W^c=\mathcal W^{cs*}\cap\mathcal W^{cu*}$. Next we discuss the smoothness of $\mathcal{W}^{cs*}\left(x,\omega\right)$. We have the following result.
 \begin{theorem}\label{thm4.4}
	Assume $F$ is $C^k$ in $u$. For $-\beta<\chi<k\eta_{cs}$ and $\gamma<k\eta_{cs}<\min\{\alpha,\beta\}$ with 
\begin{equation}\label{eq45}
	{KL\left(C_{\chi+\left(i-1\right)\eta_{cs}}+\frac1{i\eta_{cs}-\gamma}+\frac1{\alpha-i\eta_{cs}}\right)}<1,~\text{for all} ~1\leq{i}\leq{k}
\end{equation}
then $l^{cs}\left(\iota,\omega,x\right)$ is $C^k$ in $\iota$ for $x\in X_0$.
\end{theorem}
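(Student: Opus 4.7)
The plan is to mirror the inductive strategy used in Theorem \ref{thm3.6}, but adapted to the forward-time setting of the foliation. Since
\begin{equation*}
l^{cs}(\iota,\omega,x)=\Pi_{0u}x+\Pi_{u}\overline{\psi}(0,\omega,\iota-\Pi_{0cs}x,x),
\end{equation*}
it suffices to prove $C^k$-smoothness in $\iota$ of the fixed point $\overline{\psi}(\cdot,\omega,\iota,x)$ of the Lyapunov--Perron operator $\mathcal{Z}(\cdot,\iota)$ from \eqref{eq34}, and then evaluate at $t=0$ and project onto $X_{0u}$. Throughout, the variable $x\in X_0$ is a frozen parameter; the continuity in $x$ was already established in Step 3 of Theorem \ref{thm4.2}, so only differentiability in the $\iota$-slot is at issue.

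For Step 1 (the $C^1$ case), choose $\sigma^{\ast}>0$ small enough that, for $0<\sigma\le\sigma^{\ast}$, the exponents $\eta_{cs}\pm\sigma$ still satisfy the spectral gap \eqref{eq45} with $i=1$. Linearise \eqref{eq34} formally in $\psi$ at $\overline{\psi}(\cdot,\omega,\iota_{0},x)$ to define a bounded linear operator $\mathcal{H}'$ on $\mathcal{C}_{\eta_{cs}+\sigma}$ by replacing $\Delta G$ with $D_{v}G(\theta_{s}\omega,v(s,\omega,x)+\overline{\psi}(s,\omega,\iota_{0},x))\,v(s)$ in the three integrals. Using exactly the contraction estimate from Step 2 of Theorem \ref{thm4.2}, together with \eqref{eq45} for $i=1$, we get $\|\mathcal{H}'\|<1$, so $id-\mathcal{H}'$ is invertible. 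Writing
\begin{equation*}
\overline{\psi}(\cdot,\omega,\iota,x)-\overline{\psi}(\cdot,\omega,\iota_{0},x)-\mathcal{H}'\bigl(\overline{\psi}(\cdot,\omega,\iota,x)-\overline{\psi}(\cdot,\omega,\iota_{0},x)\bigr)=\phi_{A_{0c}}(t,0)(\iota-\iota_{0})+\mathcal{I},
\end{equation*}
where $\mathcal{I}$ is the quadratic Taylor remainder from replacing $G$ by its linearisation along $\overline{\psi}(\cdot,\omega,\iota_{0},x)$, we bound $\|\mathcal{I}\|_{\mathcal{C}_{\eta_{cs}-\sigma}}=o(\|\iota-\iota_{0}\|)$. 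Following the scheme of Theorem \ref{thm3.6}, split $\mathcal{I}$ into six pieces indexed by the three spectral projections and by large/small $t$ with cutoffs $N_{1},N_{2},N_{3}>0$ (the signs are flipped from the center-unstable case because the half-line is $[0,+\infty)$): the far-time tails are made small using the shift between $\mathcal{C}_{\eta_{cs}-\sigma}$ and $\mathcal{C}_{\eta_{cs}+\sigma}$ together with the Lipschitz bound \eqref{eq35}, while the compact-time pieces are made small by uniform continuity of $D_{v}G$ on bounded sets. This yields
\begin{equation*}
D_{\iota}\overline{\psi}(\cdot,\omega,\iota_{0},x)=(id-\mathcal{H}')^{-1}\phi_{A_{0c}}(\cdot,0)\in\mathcal{L}(X_{0cs},\mathcal{C}_{\eta_{cs}}),
\end{equation*}
and a second identical decomposition shows $\iota\mapsto D_{\iota}\overline{\psi}(\cdot,\omega,\iota,x)$ is continuous.

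For Step 2, assume inductively that $\overline{\psi}(\cdot,\omega,\cdot,x)$ is $C^{m-1}$ from $X_{0cs}$ into $\mathcal{C}_{(m-1)\eta_{cs}}$ for some $2\le m\le k$. Differentiating the fixed-point equation $(m-1)$ times with respect to $\iota$ produces
\begin{equation*}
D^{m-1}_{\iota}\overline{\psi}=\mathcal{H}'_{m-1}\bigl(D^{m-1}_{\iota}\overline{\psi}\bigr)+R_{m-1}(\cdot,\omega,\iota,x),
\end{equation*}
where $\mathcal{H}'_{m-1}$ is the obvious linearisation (now on $\mathcal{C}_{m\eta_{cs}}$) and $R_{m-1}$ is a Faà di Bruno remainder built from $D^{j}_{v}G$ ($2\le j\le m-1$) paired with products of lower order $D^{i}_{\iota}\overline{\psi}$. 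Assumption \eqref{eq45} at level $i=m$ yields $\|\mathcal{H}'_{m-1}\|<1$ on $\mathcal{C}_{m\eta_{cs}}$, so $id-\mathcal{H}'_{m-1}$ is invertible there, and the inductive hypothesis together with the assumed $C^{k}$ regularity of $F$ (hence of $G$) gives $R_{m-1}\in C^{1}$ from $X_{0cs}$ into $\mathcal{L}^{m-1}(X_{0cs},\mathcal{C}_{(m-1)\eta_{cs}+\sigma})$. Repeating the six-piece estimate from Step 1 on this new identity promotes $D^{m-1}_{\iota}\overline{\psi}$ to a $C^{1}$ map into $\mathcal{L}^{m}(X_{0cs},\mathcal{C}_{m\eta_{cs}})$, completing the induction.

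The main obstacle is controlling the stable component
\begin{equation*}
\lim_{\lambda\to+\infty}\int_{0}^{t}\phi_{A_{0s}}(t,s)\lambda(\lambda I-A_{s})^{-1}\Pi_{s}(\cdot)\,ds,
\end{equation*}
because the only available bound is the Stieltjes-convolution estimate of Lemma \ref{lem2.8}, which forces the constant $C_{\chi+(i-1)\eta_{cs}}$ into the contraction condition. This is precisely why the inductive spectral gap \eqref{eq45} is strictly stronger than the one-step condition \eqref{eq30}, and why the weight shift $\sigma$ must be chosen uniformly small so that $C_{\chi+(i-1)\eta_{cs}\pm\sigma}$ remains controlled. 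Once the six-piece split is in place, the far-tail bound on the stable piece (for $t>N_{3}$ with $N_{3}$ large) uses the gap between the evaluation norm $\mathcal{C}_{\eta_{cs}-\sigma}$ and the source norm $\mathcal{C}_{\eta_{cs}+\sigma}$, exactly mirroring the $\mathcal{I}_{6}$ estimate in Theorem \ref{thm3.6}. Finally, differentiability of $l^{cs}$ in $\iota$ is inherited from that of $\overline{\psi}$ by a linear substitution, completing the proof.
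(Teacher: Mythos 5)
Your proposal follows essentially the same route as the paper's own proof: linearise the Lyapunov--Perron operator $\mathcal{Z}$ at the fixed point, invert $id$ minus the linearisation on a weight-shifted space using the spectral gap \eqref{eq45}, decompose the Taylor remainder into six pieces (three projections $\times$ near/far time cutoffs $N_1,N_2,N_3$), kill the far tails by a weight gap and the compact pieces by uniform continuity of $D_vG$, and then bootstrap to $C^k$ via Fa\`a di Bruno.

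There is, however, one directional slip you should watch. You shift to $\eta_{cs}\pm\sigma$ and then describe the far-tail estimate as using ``the evaluation norm $\mathcal{C}_{\eta_{cs}-\sigma}$ and the source norm $\mathcal{C}_{\eta_{cs}+\sigma}$''. On the forward half-line $[0,+\infty)$ the weighted spaces are nested the opposite way from the backward half-line used in Theorem~\ref{thm3.6}: here $\eta'<\eta''$ gives $\mathcal{C}_{\eta'}\subset\mathcal{C}_{\eta''}$, so a \emph{smaller} weight is the \emph{stronger} norm. To extract a factor like $e^{-\sigma N}$ from the tail, the source (Lipschitz) control must sit in the smaller-weight (stronger) space and the evaluation in the larger-weight (weaker) space. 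Your choice has them reversed. The paper works entirely with downward shifts: the Lyapunov--Perron operator is inverted on $\mathcal{C}_{\eta_{cs}-\nu}$, the Lipschitz bound is carried in $\mathcal{C}_{\eta_{cs}-2\nu}$, the remainder is estimated in the $\mathcal{C}_{\eta_{cs}-\nu}$ norm, and the shifted gap condition \eqref{eq46} uses $\chi+j\nu$, $\eta_{cs}-j\nu$. Replacing your $\pm\sigma$ pair by $-\sigma,-2\sigma$ (and adjusting the constant in Lemma~\ref{lem2.8} to $C_{\chi+j\sigma}$) restores exactly the paper's scheme; the rest of your argument, including the inductive step with $R_{m-1}$ and the linear substitution $l^{cs}(\iota,\omega,x)=\Pi_{0u}x+\Pi_u\overline{\psi}(0,\omega,\iota-\Pi_{0cs}x,x)$, is in line with the paper.
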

\begin{proof}
	\noindent {\bf Step 1.} We prove that the fixed point $\overline{\psi}\left(t,\omega,\iota,x\right)$ of the Lyapunov-Perron operator $\mathcal{Z}\left(\cdot,\iota\right)$ defined in Theorem \ref{thm4.2} is $C^1$. For notational simplify, we denote $\mathcal{C}_{\eta_{cs}}\left([0,+\infty);X_0\right)$ by $\mathcal{C}_{\eta_{cs}}$ from now on. Notice that for $\gamma<\eta_{cs}^{'}<\eta_{cs}^{''}<\min\left\{\alpha,\beta\right\}$, $\mathcal{C}_{\eta_{cs}^{'}}\subset{\mathcal{C}_{\eta_{cs}^{''}}}$, thus a fixed point in $\mathcal{C}_{\eta_{cs}^{'}}$ must be in $\mathcal{C}_{\eta_{cs}^{''}}$. So $\overline{\psi}\left(\cdot,\omega,\iota,x\right)$ is independent of $\eta$ satisfying \eqref{eq38}. 

Clearly, there exits $\nu^*>0$ such that for $0<\nu\leq\nu^*$, we have $-\beta<\chi<i\eta_{cs}-2\nu<i\eta_{cs}-\nu	
$, $\gamma<i\eta_{cs}-2\nu<i\eta_{cs}-\nu<\min\left\{\alpha,\beta\right\}
$ and
\begin{equation}\label{eq46}
	KL\left(C_{\chi+j\nu}+\frac1{i\eta_{cs}-j\nu-\gamma}+\frac1{\alpha-i\eta_{cs}+j\nu}\right)<1,~\text{for}~\text{all}~1\leq{i}\leq{k},~j=1,2.
\end{equation}
Then $\mathcal{Z}\left(\cdot,\iota\right)$ is a uniform contraction in $\mathcal{C}_{\eta_{cs}-\nu}\subset{\mathcal{C}_{\eta_{cs}}}$. By \eqref{eq34}, for $\iota\in X_{0cs}$,
\begin{equation*}
\begin{split}
	\overline\psi\left(t,\omega,\iota,x\right)=&\phi_{A_{0c}}\left(t,0\right)\iota+\Pi_{0s}\left(S\diamond \Delta G_x\left(v,\overline\psi\right)\right)\left(t\right)\\&+\int_0^t\phi_{A_{0c}}\left(t,s\right)\Pi_c \Delta G\left(\theta_s\omega,v\left(s\right),\overline\psi\left(s,\omega,\iota,x\right)\right)ds\\&-\int_t^{+\infty}\phi_{A_{0u}}\left(t,s\right)\Pi_u \Delta G\left(\theta_s\omega,v\left(s\right),\overline\psi\left(s,\omega,\iota,x\right)\right)ds.
	\end{split}
\end{equation*}
By \eqref{eq35}, for $\iota,\iota_0\in X_{0cs}$ and $i=1,2$, $\overline\psi$ satisfies 
\begin{equation*}
	{\left\Vert{\overline{\psi}\left(t,\omega,\iota,x\right)-\overline{\psi}\left(t,\omega,\iota_0,x\right)}\right\Vert}_{\mathcal{C}_{{\eta_{cs}}-i \nu}}\leq\frac{K\left\Vert{\iota_1-\iota_2}\right\Vert}{1-KL\left(C_{\chi+i\nu}+\frac1{\eta_{cs}-i\nu-\gamma}+\frac1{\alpha-\eta_{cs}+i\nu}\right)}.
\end{equation*}
Fix $\nu\in(0,\nu^*]$, we first prove that $\overline{\psi}$ is differentialble from $X_{0cs}$ to $\mathcal{C}_{\eta_{cs}-\nu}$. For the solution of \eqref{eq4} $v\left(t\right)$, $\iota_0\in{X_{0cs}}$, $\psi\in{\mathcal{C}_{\eta-\nu}}$, define $\mathcal{T}:\mathcal{C}_{\eta_{cs}-\nu}\to{\mathcal{C}_{\eta_{cs}-\nu}}$ as follows
\begin{equation*}
\begin{split}
	\mathcal{T}\psi=&\int_0^t\phi_{A_{0c}}\left(t,s\right)\Pi_c D_\psi\Delta G\left(\theta_s\omega,v\left(s\right),\overline\psi\left(s,\omega,\iota_0,x\right)\right)\psi\left(s\right)ds\\&-\int_t^{+\infty}\phi_{A_{0u}}\left(t,s\right)\Pi_u D_\psi\Delta G\left(\theta_s\omega,v\left(s\right),\overline\psi\left(s,\omega,\iota_0,x\right)\right)\psi\left(s\right)ds\\&+\lim_{\lambda\to+\infty}\int_{0}^t\phi_{A_{0s}}\left(t,s\right)\lambda{\left(\lambda{I}-A_s\right)}^{-1}\\&\times\Pi_s D_\psi\Delta G\left(\theta_s\omega,v\left(s\right),\overline\psi\left(s,\omega,\iota_0,x\right)\right)\psi\left(s\right)ds.
	\end{split}
\end{equation*}
Using the same arguments as we proved that $\mathcal{Z}$ is a contraction and $\left\Vert{D_\psi \Delta G\left(\theta_s\omega,v,\psi\right)}\right\Vert\leq{Lip~G=L}$, we have $\mathcal{T}$ is a bounded linear operator from $\mathcal{C}_{\eta_{cs}-\nu}$ to itself with the norm
\begin{equation*}
	\left\Vert{\mathcal{T}}\right\Vert\leq{K}L\left(C_{\chi+\nu}+\frac1{\eta_{cs}-\nu-\gamma}+\frac1{\alpha-\eta_{cs}+\nu}\right)<1.
\end{equation*}
Thus $id-\mathcal{T}$ is invertible in $\mathcal{C}_{\eta_{cs}-\nu}$. For $\iota,\iota_0\in{X_{0cs}}$, set
\begin{equation*}
	\begin{split}
		\mathcal S\left(t\right)&=\int_0^t\phi_{A_{0c}}\left(t,s\right)\Pi_c\overline\Delta_{G\left(\psi\right)}\left(s\right)ds-\int_t^{+\infty}\phi_{A_{0u}}\left(t,s\right)\Pi_u\overline\Delta_{G\left(\psi\right)}\left(s\right)ds\\&+\lim_{\lambda\to+\infty}\int_{0}^t\phi_{A_{0s}}\left(t,s\right)\lambda{\left(\lambda{I}-A_s\right)}^{-1}\Pi_s \overline\Delta_{G\left(\psi\right)}\left(s\right)ds
   \end{split}
\end{equation*}
where 
\begin{equation*}
\begin{split}
	\overline\Delta_{G\left(\psi\right)}\left(s\right)&=\Delta G\left(\theta_s\omega,v\left(s\right),\overline\psi\left(s,\omega,\iota,x\right)\right)-\Delta G\left(\theta_s\omega,v\left(s\right),\overline\psi\left(s,\omega,\iota_0,x\right)\right)\\&-D_\psi\Delta G\left(\theta_s\omega,v\left(s\right),\overline\psi\left(s,\omega,\iota_0,x\right)\right)\left(\overline\psi\left(s,\omega,\iota,x\right)-\overline\psi\left(s,\omega,\iota_0,x\right)\right)\\&=G\left(\theta_s\omega,v\left(s\right)+\overline\psi\left(s,\omega,\iota,x\right)\right)-G\left(\theta_s\omega,v\left(s\right)+\overline\psi\left(s,\omega,\iota_0,x\right)\right)\\&-D_\psi G\left(\theta_s\omega,v\left(s\right)+\overline\psi\left(s,\omega,\iota_0,x\right)\right)\left(\overline\psi\left(s,\omega,\iota,x\right)-\overline\psi\left(s,\omega,\iota_0,x\right)\right).
	\end{split}
\end{equation*}
We prove $\left\Vert{\mathcal S}\right\Vert_{\mathcal{C}_{\eta_{cs}-\nu}}=o\left(\left\Vert\iota-\iota_0\right\Vert\right)$ as $\iota\to\iota_0$, then we have 
\begin{equation*}
\begin{split}
\overline{\psi}\left(\cdot,\omega,\iota,x\right)-\overline{\psi}\left(\cdot,\omega,\iota_0,x\right)&-\mathcal{T}\left(\overline{\psi}\left(\cdot,\omega,\iota,x\right)-\overline{\psi}\left(\cdot,\omega,\iota_0,x\right)\right)\\&=\phi_{A_0}\left(t,0\right)\left(\iota-\iota_0\right)+\mathcal S\\&=	\phi_{A_0}\left(t,0\right)\left(\iota-\iota_0\right)+o\left(\left\Vert\iota-\iota_0\right\Vert\right),
\end{split}
\end{equation*}
which implies
\begin{equation*}
	\overline{\psi}\left(\cdot,\omega,\iota,x\right)-\overline{\psi}\left(\cdot,\omega,\iota_0,x\right)=\left(id-\mathcal{T}\right)^{-1}\phi_{A_0}\left(t,0\right)\left(\iota-\iota_0\right)+o\left(\left\Vert\iota-\iota_0\right\Vert\right).
\end{equation*}
We know that $\phi_{A_0}\left(t,0\right)=T\left(t\right)e^{\int_{0}^{t}z\left(\theta_r\omega\right)dr}$ is a bounded operator, therefore $\overline{\psi}\left(\cdot,\omega,\iota,x\right)$ is differentiable in $\iota$ and $D_\iota\overline{\psi}\left(\cdot,\omega,\iota,x\right)\in{{\mathcal L}}\left(X_{0cs},\mathcal{C}_{{\eta_{cs}}-\nu}\right)$. Next we prove $\left\Vert{\mathcal S}\right\Vert_{\mathcal{C}_{\eta_{cs}-\nu}}=o\left(\left\Vert\iota-\iota_0\right\Vert\right)$ as $\iota\to\iota_0$. We write $e^{-\left(\eta_{cs}-\nu\right)t-\int_0^tz\left(\theta_r\omega\right)dr}\mathcal S=\sum_{i=1}^6\mathcal S_{i}$, where for $t>N_1$,
\begin{equation*}
	\mathcal S_1=e^{-\left(\eta_{cs}-\nu\right)t-\int_{0}^tz\left(\theta_r\omega\right)dr}\int_{N_1}^t\phi_{A_{0c}}\left(t,s\right)\Pi_c\overline\Delta_{G\left(\psi\right)}\left(s\right)ds,
\end{equation*}
and $\mathcal S_1=0$ for $t\leq{N_1}$; For $t>N_1$,
\begin{equation*}
	\mathcal S_2=e^{-\left(\eta_{cs}-\nu\right)t-\int_{0}^tz\left(\theta_r\omega\right)dr}\int_{0}^{N_1}\phi_{A_{0c}}\left(t,s\right)\Pi_c\overline\Delta_{G\left(\psi\right)}\left(s\right)ds,
\end{equation*}
and change $N_1$ to $t$ if $t\leq{N_1}$. For $t>N_2$,
\begin{equation*}
	\mathcal S_3=-e^{-\left(\eta_{cs}-\nu\right)t-\int_{0}^tz\left(\theta_r\omega\right)dr}\int_{t}^{N_2}\phi_{A_{0u}}\left(t,s\right)\Pi_u\overline\Delta_{G\left(\psi\right)}\left(s\right)ds,
\end{equation*}
and $\mathcal S_3=0$ for $t\leq{N_2}$; For $t<N_2$,
\begin{equation*}
	\mathcal S_4=-e^{-\left(\eta_{cs}-\nu\right)t-\int_{0}^tz\left(\theta_r\omega\right)dr}\int_{N_2}^{+\infty}\phi_{A_{0u}}\left(t,s\right)\Pi_u\overline\Delta_{G\left(\psi\right)}\left(s\right)ds,
\end{equation*}
and change $N_2$ to $t$ if $t\geq{N_2}$. For $t>N_3$,
\begin{equation*}
	\mathcal S_5=e^{-\left(\eta_{cs}-\nu\right)t-\int_{0}^tz\left(\theta_r\omega\right)dr}\lim_{\lambda\to+\infty}\int_{N_3}^t\phi_{A_{0s}}\left(t,s\right)\lambda{\left(\lambda{I}-A_s\right)}^{-1}\Pi_s\overline\Delta_{G\left(\psi\right)}\left(s\right)ds,
\end{equation*}
and $\mathcal S_5=0$ for $t\leq{N_3}$; For $t>N_3$,
\begin{equation*}
	\mathcal S_6=e^{-\left(\eta_{cs}-\nu\right)t-\int_{0}^tz\left(\theta_r\omega\right)dr}\lim_{\lambda\to+\infty}\int_{0}^{N_3}\phi_{A_{0s}}\left(t,s\right)\lambda{\left(\lambda{I}-A_s\right)}^{-1}\Pi_s\overline\Delta_{G\left(\psi\right)}\left(s\right)ds,
\end{equation*}
and change $N_3$ to $t$ if $t\leq{N_3}$. $N_1$, $N_2$, $N_3$ above are large positive numbers to be chosen later. By \eqref{eq5} and \eqref{eq35},
for $t>N_1$,
\begin{equation*}
	\begin{split}
	\left\vert{\mathcal S_1}\right\vert&\leq2KL\int_{N_1}^te^{\gamma\left(t-s\right)}e^{-\left(\eta_{cs}-\nu\right)t}e^{\left(\eta_{cs}-2\nu\right)s}\left\Vert\overline{\psi}\left(\cdot,\omega,\iota,x\right)-\overline{\psi}\left(\cdot,\omega,\iota_0,x\right)\right\Vert_{\mathcal{C}_{\eta_{cs}-2\nu}}ds\\&\leq2KL\left\Vert\overline{\psi}\left(\cdot,\omega,\iota,x\right)-\overline{\psi}\left(\cdot,\omega,\iota_0,x\right)\right\Vert_{\mathcal{C}_{\eta_{cs}-2\nu}}e^{\left(\gamma-\eta_{cs}+\nu\right)t}	\int_{N_1}^te^{\left(\eta_{cs}-2\nu-\gamma\right)s}ds\\&\leq	\frac{2K^2L}{\left(\eta_{cs}-2\nu-\gamma\right)\left[1-KL\left(C_{\zeta+2\nu}+\frac1{\eta_{cs}-2\nu-\gamma}+\frac1{\alpha-\eta_{cs}+2\nu}\right)\right]}e^{-\nu{N_1}}\left\Vert\iota-\iota_0\right\Vert.
	\end{split}
\end{equation*} 
Choose $N_1$ so large that 
\begin{equation}\label{eq47}
	\underset{t\geq0}{\sup}\left\vert{\mathcal S_1}\right\vert\leq\frac{\varepsilon}{6}\left\Vert\iota-\iota_0\right\Vert.
\end{equation}
Fix such $N_1$, 
\begin{equation*}
\begin{split}
	&\left\vert{\mathcal S_2}\right\vert\leq{K}\left\Vert\overline{\psi}\left(\cdot,\omega,\iota,x\right)-\overline{\psi}\left(\cdot,\omega,\iota_0,x\right)\right\Vert_{\mathcal{C}_{\eta_{cs}-\nu}}e^{\left(\gamma-\eta_{cs}+\nu\right)t}	\int_{0}^{N_1}e^{\left(\eta_{cs}-\nu-\gamma\right)s}\\&\times\int_0^1\Big\Vert{D}_\psi G\left(\theta_s\omega,v\left(s\right)+\tau\overline{\psi}\left(s,\omega,\iota,x\right)+\left(1-\tau\right)\overline{\psi}\left(s,\omega,\iota_0,x\right)\right)\\&-D_\psi G\left(\theta_s\omega,v\left(s\right)+\overline{\psi}\left(s,\omega,\iota_0,x\right)\right)\Big\Vert{d\tau}ds.
\end{split}
\end{equation*}
Since $G\left(\omega,v\right)$ is $C^k$, $D_\psi G\left(\omega,v,\psi\right)$ is continuous. For $\varepsilon>0$, there exists $\rho_1>0$ such that if $\left\Vert\iota-\iota_0\right\Vert<\rho_1$,
\begin{equation*}
\begin{split}
	&\Big\Vert{D}_\psi G\left(\theta_s\omega,v\left(s\right)+\tau\overline{\psi}\left(s,\omega,\iota,x\right)+\left(1-\tau\right)\overline{\psi}\left(s,\omega,\iota_0,x\right)\right)\\&-D_\psi G\left(\theta_s\omega,v\left(s\right)+\overline{\psi}\left(s,\omega,\iota_0,x\right)\right)\Big\Vert\\&\leq\frac{\varepsilon\left(\eta_{cs}-\nu-\gamma\right)\left[1-KL\left(C_{\zeta+\nu}+\frac1{\eta_{cs}-\nu-\gamma}+\frac1{\alpha-\eta_{cs}+\nu}\right)\right]}{6K^2e^{\left(\eta_{cs}-\nu-\gamma\right)N_1}}.
\end{split}
\end{equation*}
Then by \eqref{eq35},
\begin{equation}\label{eq48}
	\underset{t\geq0}{\sup}\left\vert{\mathcal S_2}\right\vert\leq\frac{\varepsilon}{6}\left\Vert\iota-\iota_0\right\Vert ~\text{for} ~\left\Vert\iota-\iota_0\right\Vert<\rho_1.
\end{equation}
Similarly, by choosing $N_2$ to be sufficiently large, we have
\begin{equation}\label{eq49}
	\underset{t\geq0}{\sup}\left\vert{\mathcal S_3}\right\vert\leq\frac{\varepsilon}{6}\left\Vert\iota-\iota_0\right\Vert.
\end{equation}
Fix such $N_2$, there exists $\rho_2>0$ such that if $\left\Vert\iota-\iota_0\right\Vert<\rho_2$, 
\begin{equation}\label{eq50}
	\underset{t\geq0}{\sup}\left\vert{\mathcal S_4}\right\vert\leq\frac{\varepsilon}{6}\left\Vert\iota-\iota_0\right\Vert ~\text{for} ~\left\Vert\iota-\iota_0\right\Vert<\rho_2.
\end{equation} 
For $\mathcal S_5$, if $t>N_3$,
\begin{equation*}
	\begin{split}
		&\left\vert{\mathcal S_5}\right\vert=e^{-\left(\eta_{cs}-\nu\right)t-\int_{0}^tz\left(\theta_r\omega\right)dr}\lim_{\lambda\to+\infty}\int_{0}^{t-N_3}\phi_{A_{0s}}\left(t,l+N_3\right)\lambda{\left(\lambda{I}-A_s\right)}^{-1}\Pi_s\overline\Delta_{G\left(\psi\right)}\left(l+N_3\right)dl\\&\leq2LC_{\chi+2\nu}\underset{h\in[N_3,t]}{\sup}e^{\left(\chi+4\nu-\eta_{cs}\right)\left(t-h\right)}e^{-\nu{t}}\left\Vert\overline{\psi}\left(\cdot,\omega,\iota,x\right)-\overline{\psi}\left(\cdot,\omega,\iota_0,x\right)\right\Vert_{\mathcal{C}_{\eta_{cs}-2\nu}}\\&\leq\frac{2KLC_{\chi+2\nu}e^{-\nu{N_3}}}{1-KL\left(C_{\chi+2\nu}+\frac1{\eta_{cs}-2\nu-\gamma}+\frac1{\alpha-\eta_{cs}+2\nu}\right)}\left\Vert\iota-\iota_0\right\Vert.
	\end{split}
\end{equation*}
Choose $N_3$ so large that 
\begin{equation}\label{eq51}
	\underset{t\geq0}{\sup}\left\vert{\mathcal S_5}\right\vert\leq\frac{\varepsilon}{6}\left\Vert\iota-\iota_0\right\Vert.
\end{equation}
And for the last one $\mathcal S_{6}$,
\begin{equation*}
	\begin{split}
		&\left\vert{\mathcal S_{6}}\right\vert\leq LC_{\chi+\nu}\sup_{s\in[0,N_3]}e^{\left(\chi+2\nu-\eta_{cs}\right)\left(t-s\right)}\left\Vert\overline{\psi}\left(\cdot,\omega,\iota,x\right)-\overline{\psi}\left(\cdot,\omega,\iota_0,x\right)\right\Vert_{\mathcal{C}_{\eta_{cs}-\nu}}\\&\times\int_0^1\Big\Vert{D}_\psi G\left(\theta_s\omega,v\left(s\right)+\tau\overline{\psi}\left(s,\omega,\iota,x\right)+\left(1-\tau\right)\overline{\psi}\left(s,\omega,\iota_0,x\right)\right)\\&-D_\psi G\left(\theta_s\omega,v\left(s\right)+\overline{\psi}\left(s,\omega,\iota_0,x\right)\right)\Big\Vert{d\tau}.
   \end{split}
\end{equation*}
For $\varepsilon>0$, there exists $\rho_3>0$ such that if $\left\Vert\iota-\iota_0\right\Vert<\rho_3$,
\begin{equation*}
\begin{split}
	&\Big\Vert{D}_\psi G\left(\theta_s\omega,v\left(s\right)+\tau\overline{\psi}\left(s,\omega,\iota,x\right)+\left(1-\tau\right)\overline{\psi}\left(s,\omega,\iota_0,x\right)\right)\\&-D_\psi G\left(\theta_s\omega,v\left(s\right)+\overline{\psi}\left(s,\omega,\iota_0,x\right)\right)\Big\Vert\\&\leq\frac{1-KL\left(C_{\chi+\nu}+\frac1{\eta_{cs}-\nu-\gamma}+\frac1{\alpha-\eta_{cs}+\nu}\right)}{KLC_{\chi+\nu}}.
\end{split}
\end{equation*}
Then by \eqref{eq35},
\begin{equation}\label{eq52}
	\underset{t\geq0}{\sup}\left\vert{\mathcal S_6}\right\vert\leq\frac{\varepsilon}{6}\left\Vert\iota-\iota_0\right\Vert ~\text{for} ~\left\Vert\iota-\iota_0\right\Vert<\rho_3.
\end{equation}
By taking $\rho_0=\min\left\{\rho_1,\rho_2,\rho_3\right\}$ and combining \eqref{eq47}-\eqref{eq52}, we have
\begin{equation*}
	\left\Vert{\mathcal S}\right\Vert_{\mathcal{C}_{\eta_{cs}-\nu}}\leq\varepsilon\left\Vert\iota-\iota_0\right\Vert~\text{for}~\left\Vert\iota-\iota_0\right\Vert<\rho_0. 
\end{equation*}
This implies $\left\Vert{\mathcal S}\right\Vert_{\mathcal{C}_{\eta_{cs}-\nu}}=o\left(\left\Vert\iota-\iota_0\right\Vert\right)$ as $\iota\to\iota_0$. Next we prove $D_\iota\overline{\psi}\left(\cdot,\omega,\iota\right)$ is continuous with respect to $\iota$ from $X_{0cs}$ to 	${ \mathcal L}\left(X_{0cs},\mathcal{C}_{\eta}\right)$. By \eqref{eq31}, we have 
\begin{equation*}
\begin{split}
&D_\iota\overline\psi\left(t,\omega,\iota,x\right)=\phi_{A_{0c}}\left(t,0\right)\Pi_{0cs}\\&+\int_0^t\phi_{A_{0c}}\left(t,s\right)\Pi_c D_\psi  G\left(\theta_s\omega,v\left(s\right)+\overline\psi\left(s,\omega,\iota,x\right)\right)D_\iota\overline\psi\left(s,\omega,\iota,x\right)ds\\&-\int_t^{+\infty}\phi_{A_{0u}}\left(t,s\right)\Pi_u D_\psi G\left(\theta_s\omega,v\left(s\right)+\overline\psi\left(s,\omega,\iota,x\right)\right)D_\iota\overline\psi\left(s,\omega,\iota,x\right)ds\\&+\lim_{\lambda\to+\infty}\int_{0}^t\phi_{A_{0s}}\left(t,s\right)\lambda{\left(\lambda{I}-A_s\right)}^{-1}\\&\times\Pi_s D_\psi G\left(\theta_s\omega,v\left(s\right)+\overline\psi\left(s,\omega,\iota,x\right)\right)D_\iota\psi\left(s,\omega,\iota,x\right)ds.
\end{split}	
\end{equation*}
For $\iota\in X_{0cs}$ and $\psi\in\mathcal{C}_{\eta_{cs}}$, define the operator $\mathcal{T}^{'}:\mathcal C_{\eta_{cs}}\to{\mathcal C_{\eta_{cs}}}$ as follows(replace the $\iota_0$ with $\iota$ in the definition of $\mathcal{T}$)
\begin{equation*}
\begin{split}
	\mathcal{T}^{'}\psi=&\int_0^t\phi_{A_{0c}}\left(t,s\right)\Pi_c D_\psi\Delta G\left(\theta_s\omega,v\left(s\right),\overline\psi\left(s,\omega,\iota,x\right)\right)\psi\left(s\right)ds\\&-\int_t^{+\infty}\phi_{A_{0u}}\left(t,s\right)\Pi_u D_\psi\Delta G\left(\theta_s\omega,v\left(s\right),\overline\psi\left(s,\omega,\iota,x\right)\right)\psi\left(s\right)ds\\&+\lim_{\lambda\to+\infty}\int_{0}^t\phi_{A_{0s}}\left(t,s\right)\lambda{\left(\lambda{I}-A_s\right)}^{-1}\\&\times\Pi_s D_\psi\Delta G\left(\theta_s\omega,v\left(s\right),\overline\psi\left(s,\omega,\iota,x\right)\right)\psi\left(s\right)ds.
	\end{split}
\end{equation*}
Similarly, we have
\begin{equation}\label{eq53}
	\Vert{\mathcal{T^{'}}}\Vert\leq{K}L\left(C_{\chi}+\frac1{\eta_{cs}-\gamma}+\frac1{\alpha-\eta_{cs}}\right)<1.
\end{equation}
For $\iota,\iota_0\in{X_{0cs}}$, 
\begin{equation}\label{eq54}
	\begin{split}
		&D_\iota\overline{\psi}\left(t,\omega,\iota,x\right)-D_\iota\overline{\psi}\left(t,\omega,\iota_0,x\right)\\&=\int_0^t\phi_{A_{0c}}\left(t,s\right)\Pi_c\Big[D_\psi G\left(\theta_s\omega,v\left(s\right)+\overline{\psi}\left(s,\omega,\iota,x\right)\right)D_\iota\overline{\psi}\left(s,\omega,\iota,x\right)\\&-D_\psi G\left(\theta_s\omega,v\left(s\right)+\overline{\psi}\left(s,\omega,\iota_0,x\right)\right)D_\iota\overline{\psi}\left(s,\omega,\iota_0,x\right)\Big]ds\\&+\int_{+\infty}^{t}\phi_{A_{0u}}\left(t,s\right)\Pi_u\Big[D_\psi G\left(\theta_s\omega,v\left(s\right)+\overline{\psi}\left(s,\omega,\iota,x\right)\right)D_\iota\overline{\psi}\left(s,\omega,\iota,x\right)\\&-D_\psi G\left(\theta_s\omega,v\left(s\right)+\overline{\psi}\left(s,\omega,\iota_0,x\right)\right)D_\iota\overline{\psi}\left(s,\omega,\iota_0,x\right)\Big]ds\\&+\lim_{\lambda\to+\infty}\int_{0}^t\phi_{A_{0s}}\left(t,s\right)\lambda{\left(\lambda{I}-A_s\right)}^{-1}\Pi_s\Big[D_\psi G\left(\theta_s\omega,v\left(s\right)+\overline{\psi}\left(s,\omega,\iota,x\right)\right)\\&D_\iota\overline{\psi}\left(s,\omega,\iota,x\right)-D_\psi G\left(\theta_s\omega,v\left(s\right)+\overline{\psi}\left(s,\omega,\iota_0,x\right)\right)D_\iota\overline{\psi}\left(s,\omega,\iota_0,x\right)\Big]ds\\&=\mathcal{T}^{'}\left(D_\iota\overline{\psi}\left(t,\omega,\iota,x\right)-D_\iota\overline{\psi}\left(t,\omega,\iota_0,x\right)\right)+{\mathcal S}^{'},
	\end{split}
\end{equation}
where
\begin{equation*}
\begin{split}
{\mathcal S}^{'}&=\int_0^t\phi_{A_{0c}}\left(t,s\right)\Pi_c\Big[D_\psi G\left(\theta_s\omega,v\left(s\right)+\overline\psi\left(s,\omega,\iota,x\right)\right)\\&-D_\psi G\left(\theta_s\omega,v\left(s\right)+\overline{\psi}\left(s,\omega,\iota_0,x\right)\right)\Big]D_\iota\overline{\psi}\left(s,\omega,\iota_0,x\right)ds\\&+\int_{+\infty}^{t}\phi_{A_{0u}}\left(t,s\right)\Pi_u\Big[D_\psi G\left(\theta_s\omega,v\left(s\right)+\overline\psi\left(s,\omega,\iota,x\right)\right)\\&-D_\psi G\left(\theta_s\omega,v\left(s\right)+\overline{\psi}\left(s,\omega,\iota_0,x\right)\right)\Big]D_\iota\overline{\psi}\left(s,\omega,\iota_0,x\right)ds\\&+\lim_{\lambda\to+\infty}\int_{0}^t\phi_{A_{0s}}\left(t,s\right)\lambda{\left(\lambda{I}-A_s\right)}^{-1}\Pi_s\Big[D_\psi G\left(\theta_s\omega,v\left(s\right)+\overline\psi\left(s,\omega,\iota,x\right)\right)\\&-D_\psi G\left(\theta_s\omega,v\left(s\right)+\overline{\psi}\left(s,\omega,\iota_0,x\right)\right)\Big]D_\iota\overline{\psi}\left(s,\omega,\iota_0,x\right)ds.
\end{split}
\end{equation*}
By \eqref{eq53}, $id-\mathcal{T}^{'}$ has a bounded inverse in ${\mathcal L}\left(X_{0cs},\mathcal{C}_{\eta_{cs}}\right)$. From \eqref{eq54}, we have
\begin{equation*}
	D_\iota\overline{\psi}\left(t,\omega,\iota,x\right)-D_\iota\overline{\psi}\left(t,\omega,\iota_0,x\right)=\left(id-\mathcal{T}^{'}\right)^{-1}{\mathcal S}^{'}.
\end{equation*}
Then by \eqref{eq53},
\begin{equation}\label{eq55}
	\Vert{D_\iota\overline{\psi}\left(t,\omega,\iota,x\right)-D_\iota\overline{\psi}\left(t,\omega,\iota_0,x\right)}\Vert_{{{L}}\left(X_{0cs},\mathcal{C}_{\eta_{cs}}\right)}\leq\frac{{\left\Vert{{\mathcal S}^{'}}\right\Vert}_{{\mathcal L}\left(X_{0cs},\mathcal{C}_\eta\right)}}{1-{K}L\left(C_{\chi}+\frac1{\eta_{cs}-\gamma}+\frac1{\alpha-\eta_{cs}}\right)}.
\end{equation}
Using the same procedure as for $\mathcal S$, we have ${\Vert{{\mathcal S}^{'}}\Vert}_{{ L}\left(X_{0cs},\mathcal{C}_{\eta_{cs}}\right)}=o\left(1\right)$ as $\iota\to\iota_0$. Then by \eqref{eq55}, $D_\iota\overline{\psi}\left(\cdot,\omega,\iota,x\right)$ is continuous with respect to $\iota$. So we proved that $\overline{\psi}\left(t,\omega,\iota,x\right)$ is $C^1$. 

\noindent {\bf Step 2.} We prove $\overline{\psi}\left(t,\omega,\iota,x\right)$ is $C^k$. 

Make the inductive assumption that $\overline{\psi}\left(\cdot,\omega,\iota,x\right)$ is $C^j$ from $X_{0cs}$ to $\mathcal{C}_{j\eta_{cs}}$ for all $1\leq{j}\leq{k-1}$ and we prove it for $j=k$. By calculation, the $\left(k-1\right)$th-derivative $D^{k-1}_\iota\overline{\psi}\left(\cdot,\omega,\iota,x\right)$ satiefies the following equation
\begin{equation*}
	\begin{split}
		&D^{k-1}_\iota\overline{\psi}\left(t,\omega,\iota,x\right)=\int_0^t\phi_{A_{0c}}\left(t,s\right)\Pi_c D_\psi G\left(\theta_s\omega,v\left(s\right)+\overline{\psi}\left(s,\omega,\iota,x\right)\right)D^{k-1}_\iota\overline{\psi}\left(s,\omega,\iota,x\right)ds\\&-\int_t^{+\infty}\phi_{A_{0u}}\left(t,s\right)\Pi_u D_\psi G\left(\theta_s\omega,v\left(s\right)+\overline{\psi}\left(s,\omega,\iota,x\right)\right)D^{k-1}_\iota\overline{\psi}\left(s,\omega,\iota,x\right)ds\\&+\lim_{\lambda\to+\infty}\int_0^t\phi_{A_{0s}}\left(t,s\right)\lambda{\left(\lambda{I}-A_s\right)}^{-1}\Pi_s D_\psi G\left(\theta_s\omega,v\left(s\right)+\overline{\psi}\left(s,\omega,\iota,x\right)\right)D^{k-1}_\iota\overline{\psi}\left(s,\omega,\iota,x\right)ds\\&+\int_0^t\phi_{A_{0c}}\left(t,s\right)\Pi_c R\left(s,\omega,\iota,x\right)ds-\int_t^{+\infty}\phi_{A_{0u}}\left(t,s\right)\Pi_u R\left(s,\omega,\iota,x\right)ds\\&+\lim_{\lambda\to+\infty}\int_0^t\phi_{A_{0s}}\left(t,s\right)\lambda{\left(\lambda{I}-A_s\right)}^{-1}\Pi_s R\left(s,\omega,\iota,x\right)ds,
	\end{split}
\end{equation*}
where
\begin{equation*}
	R\left(s,\omega,\iota,x\right)=\sum_{i=1}^{k-3}\binom{k-2}{i}D^{k-2-i}_\iota\left(D_\psi G\left(\theta_s\omega,v\left(s\right)+\overline{\psi}\left(s,\omega,\iota,x\right)\right)\right)D^{i+1}_\iota\overline{\psi}\left(s,\omega,\iota,x\right).
\end{equation*}
Applying the chain rule to $D^{m-2-i}_\iota\left(D_\psi G\left(\theta_s\omega,v\left(s\right)+\overline{\psi}\left(s,\omega,\iota,x\right)\right)\right)$, we observe that each term in $R_{m-1}\left(s,\omega,\iota,x\right)$ contains factors: $D^{i_1}_\psi{G\left(\theta_s\omega,v\left(s\right)+\overline{\psi}\left(s,\omega,\iota,x\right)\right)}$ for some $2\leq{i_1}\leq{m-1}$ and at least two derivatives $D^{i_2}_\iota\overline{\psi}\left(s,\omega,\iota,x\right)$ and $D^{i_3}_\iota\overline{\psi}\left(s,\omega,\iota,x\right)$ for some $i_2,i_3\in\{1,...,m-2\}$. Since $D^{i}_\iota\overline{\psi}\left(\cdot,\omega,\iota,x\right)\in{\mathcal{C}_{i\eta_{cs}}}$ for $i=1,...,m-1$ and $F$ is $C^k$, $R_{m-1}\left(\cdot,\omega,\iota,x\right):X_{0cs}\to{{L}^{m-1}\left(X_{0cs},X_0\right)}$ are $C^1$ in $\iota$. Using the same argument we used in the case $k=1$, we can show that $D^{m-1}_\iota\overline{\psi}\left(\cdot,\omega,\iota,x\right)$ is $C^1$ from $X_{0cs}$ to ${L}^m\left(X_{0cs},\mathcal{C}_{m\eta_{cs}}\right)$. So $\overline{\psi}\left(t,\omega,\iota,x\right)$ is $C^k$. Thus $l^{cs}\left(\iota,\omega,x\right)$ is $C^k$, which gurantees the $C^k$-smoothness of $\mathcal{W}^{cs*}\left(x,\omega\right)$.
\end{proof}

\section{Illustrative examples}\label{sec5}
In this section, we give two examples about age-structured problems and stochastic parabolic equations. Note that we only verify the main assumptions in this article and do not provide specific results of invariant manifolds and foliations.

\subsection{Age-structured problems}
Consider the following age-structured model which is a hyperbolic stochastic partial differential equation
 \begin{equation}\label{eq5.2}
	\left\lbrace
\begin{split}
&\frac{\partial u}{\partial t}+\frac{\partial u}{\partial a}=-\mu u\left(t,a\right)+u\left(t,a\right)\circ W\left(t\right), t\geq t_0,~a\geq0,\\
&u\left(t,0\right)=\int_0^{+\infty}h\left(t,a\right)u\left(t,a\right)da,
\\
&u\left(t_0,\cdot\right)=\varphi\in L_+^p\left((0,+\infty);\mathbb{R}\right),
\end{split}
\right.
\end{equation} 
where $a$ is the age, $\mu>0$ is the mortality of the population and $h\in C\left(\mathbb{R},L^q\left(\left(0,+\infty\right),\mathbb{R}\right)\right)$ (with $\frac{1}{p}+\frac{1}{q}=1$). $W$ is a one-dimensional Brownian motion. Set the Banach space $X:=\mathbb{R}\times L^p\left(\left(0,+\infty\right),\mathbb{R}\right)$ equipped with the usual product norm
\begin{equation*}
	\left\Vert\binom{r}{\varphi}\right\Vert=\left\vert r\right\vert+\left\Vert\varphi\right\Vert_{L^p\left(\left(0,+\infty\right),\mathbb{R}\right)},
\end{equation*}
and set $X_0:=\{0_\mathbb{R}\}\times L^p\left(\left(0,+\infty\right),\mathbb{R}\right)$. Consider a linear operator $\mathcal{A}:D\left(\mathcal{A}\right)\subset X\to X$ defined by
\begin{equation*}
\mathcal{A}
\begin{pmatrix}
0\\
\varphi\\
\end{pmatrix}=
\begin{pmatrix}
-\varphi\left(0\right)\\
-\varphi'-\mu\varphi\\
\end{pmatrix}
\end{equation*}
with
\begin{equation*}
	D\left(\mathcal A\right)=\{0_\mathbb{R}\}\times W^{1,p}\left(\left(0,+\infty\right),\mathbb{R}\right).
\end{equation*}
Notice that $X_0=\overline{D\left(\mathcal{A}\right)}\neq X$. So $\mathcal A$ is non-densely defined on $X$. Also, $\mathcal A$ is Hille-Yosida if and only if $p=1$. By \cite[Lemma 8.1, Lemma 8.3]{PM2009}, $\mathcal{A}$ is a MR operator and we have
\begin{equation*}
 0<\limsup_{\lambda\to+\infty}\lambda^{1/p}\left\|\left(\lambda I-\mathcal A\right)^{-1}\right\|_{\mathcal{L}\left(X\right)}<\infty.
\end{equation*}
We can conclude that $\mathcal{A}_0$, the part of $\mathcal A$ in $X_0$, is Hille-Yosida and generates a $C_0$-semigroup $\{T_{\mathcal{A}_0}\left(t\right)\}_{t\geq0}$. $\mathcal A$ generates an integrated semigroup $\{S_{\mathcal{A}}\left(t\right)\}_{t\geq0}$.

Consider the family of bounded linear operators $\{\mathcal{F}\left(t\right)\}_{t\geq0}\subset\mathcal{L}\left(X_0,X\right)$ given by
\begin{equation*}
	\mathcal{F}\left(t\right)\binom{0}{\varphi}:=\binom{\int_0^{+\infty}h\left(t,a\right)\varphi\left(a\right)da}{0},~\forall\binom{0}{\varphi}\in X.
\end{equation*}
Take
\begin{equation*}
 u^*\left(t\right)=
\begin{pmatrix}
0_{\mathbb{R}^2}\\
u\left(t,\cdot\right)
\end{pmatrix},\quad \omega\left(t\right)=
\begin{pmatrix}
0\\
W\left(t\right)
\end{pmatrix} .
\end{equation*}
Then we can rewrite \eqref{eq5.2} as
\begin{equation}\label{eq57}
	du^*\left(t\right)=\left(\mathcal{A}+\mathcal{F}\left(t\right)\right)u^*\left(t\right)dt+u^*\left(t\right)\circ d\omega\left(t\right),~t\geq t_0,~u^*\left(t_0\right)=\binom{0_{\mathbb{R}^2}}{\varphi}.
\end{equation}
Assume additionally 
\begin{equation*}
\sup_{t\in\mathbb{R}}\Vert h\left(t,\cdot\right)\Vert_{L^q}<+\infty.
\end{equation*}
Then Assumption \ref{as2.1} can be easily checked by H{\"o}lder's inequality.

\begin{definition}
	A function $u^* \in C\left(\mathbb{R}, X_0\right)$ is an integrated solution of \eqref{eq57} if and only if for each $t\geq t_0$,
\begin{equation*}
	\int_{t_0}^t u^*\left(r\right)dr \in D(\mathcal{A}),
\end{equation*}
and
\begin{equation*}
u^*\left(t\right)=u^*\left(t_0\right)+\int_{t_0}^t [\mathcal{A}+\mathcal{F}\left(r\right)]u^*(r) d r+\int_{t_0}^t u^*\left(r\right)\circ d\omega\left(r\right) .
\end{equation*}
We say that $u^*$ is a mild solution of \eqref{eq57} if for each $t\geq t_0$,
\begin{equation*}
u^*\left(t\right)=T_{\mathcal{A}_0}\left(t-t_0\right) u^*\left(t_0\right)+\frac{d}{d t} \int_{t_0}^t S_\mathcal{A}(t-s)\mathcal{F}(s) u^*(s)ds+\frac{d}{d t}\int_{t_0}^t S_\mathcal{A}(t-s)u^*(s)\circ d\omega\left(s\right).
\end{equation*}
\end{definition}
We recall the following definition of an evolution family, which is the generalization of a $C_0$-semigroup to two parameters.
\begin{definition}
	Define $\Delta:=\{\left(t,s\right)\in\mathbb{R}^2:t\geq s\}$. Let $\left(Z,\Vert\cdot\Vert_Z\right)$ be a Banach space. A family of bounded linear operators $\{U\left(t,s\right)\}_{\left(t,s\right)\in\Delta}$ on $Z$ is called an exponentially bounded evolution family if the following conditions are satisfied:
	\begin{itemize}
		\item [$\left(\mathrm{i}\right)$]$U\left(t,t\right)=I_{\mathcal{L}\left(Z\right)}$, $U\left(t,r\right)U\left(r,s\right)=U\left(t,s\right)$ for $t,r,s\in\mathbb{R}$ with $t\geq r\geq s$.
		\item [$\left(\mathrm{ii}\right)$]For each $x\in Z$, the map $\left(t,s\right)\to U\left(t,s\right)x$ is continuous from $\Delta$ into $Z$.
		\item [$\left(\mathrm{iii}\right)$]There exists two constants, $M\geq1$ and $\omega\in\mathbb{R}$, such that $\Vert U\left(t,s\right) \Vert_{\mathcal{L}\left(Z\right)}\leq Me^{\omega\left(t-s\right)}$ for $\forall \left(t,s\right)\in\Delta$.
	\end{itemize}
\end{definition}

Consider the following abstract Cauchy problem with initial time $t_0\in\mathbb{R}$,
\begin{equation}\label{eq38}
	\frac{dv\left(t\right)}{dt}=\mathcal{A}v\left(t\right)+\mathcal{F}\left(t\right)v\left(t\right),~t\geq t_0,~v\left(t_0\right)=\binom{0_{\mathbb{R}^2}}{\varphi}.
\end{equation}
By \cite[Proposition 1.5]{Magal2016}, \eqref{eq38} generates an exponentially bounded unique evolution family $\{U_\mathcal{F}\left(t,s\right)\}_{\left(t,s\right)\in\Delta}\subset X_0$ and $U_\mathcal{F}\left(\cdot,t_0\right)x_0\in C\left([t_0,+\infty),X_0\right)$ is the unique solution of the fixed point problem
\begin{equation*}
	U_\mathcal{F}\left(t,t_0\right)x_0=T_{\mathcal{A}_0}\left(t-t_0\right)+\frac{d}{dt}\int_{t_0}^tS_\mathcal{A}\left(t-s\right)\mathcal{F}\left(s\right)U_\mathcal{F}\left(s,t_0\right)x_0ds, ~t\geq t_0.
\end{equation*}

Define 
\begin{equation*}
	\mathcal C\left(\mathbb{R};X\right):=\left\{f\in C\left(\mathbb R, X\right):\Vert f\Vert_\infty=\sup_{t\in\mathbb R}\Vert f\Vert_X<+\infty\right\}.
\end{equation*}
By \cite[Theorem 5.13]{Magal2016}, we have the following result concerning the exponential dichotomy.
\begin{lemma}
	The following assertions are equivalent
	\begin{itemize}
		\item [$\left(\mathrm{i}\right)$]The evolution family $\{U_\mathcal{F}\left(t,s\right)\}_{\left(t,s\right)\in\Delta}$ has a exponential dichotomy.
		\item [$\left(\mathrm{i}\right)$]For each $f\in\mathcal{C}\left(\mathbb R;X\right)$, there exists a unique integrated solution $v\in\mathcal C\left(\mathbb R;X_0\right)$ of 
		\begin{equation*}
			\frac{dv\left(t\right)}{dt}=\mathcal{A}v\left(t\right)+\mathcal{F}\left(t\right)v\left(t\right)+f\left(t\right),~t\in\mathbb R.
			\end{equation*}
	\end{itemize}
\end{lemma}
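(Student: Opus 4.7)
The plan is to establish this as a Perron-type characterization of exponential dichotomy, adapted to the non-densely defined setting via the integrated semigroup framework already developed in Section 2. The natural strategy splits into the two implications, with the backward direction being substantially harder.

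For (i) $\Rightarrow$ (ii), assuming the evolution family $\{U_\mathcal{F}(t,s)\}$ admits an exponential dichotomy with projections $P^s(t)$ and $P^u(t) = I - P^s(t)$ commuting with $U_\mathcal{F}$, I would define the candidate bounded solution by the Green's-function formula
\begin{equation*}
v(t) = \frac{d}{dt}\int_{-\infty}^{t} S_\mathcal{A}(t-s) P^s(s)f(s)\,ds - \frac{d}{dt}\int_{t}^{+\infty} S_\mathcal{A}(t-s) P^u(s)f(s)\,ds,
\end{equation*}
interpreted via the Stieltjes convolution $(S\diamond\cdot)$ from Lemma \ref{lem2.8}. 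Using the exponential decay of $U_\mathcal{F}$ on the stable part forward in time and on the unstable part backward in time, together with the bound $\|S\diamond f\|\le C_\kappa \sup\|f\|$, both integrals converge absolutely and give $v\in\mathcal{C}(\mathbb{R};X_0)$. Direct substitution shows $v$ is an integrated solution. For uniqueness, if $v_1,v_2$ are two bounded integrated solutions, their difference solves the homogeneous equation and is globally bounded; projecting onto the stable and unstable subspaces and letting $s\to\pm\infty$ in the evolution relation $P^{s/u}(t)(v_1-v_2)(t) = U_\mathcal{F}(t,s)P^{s/u}(s)(v_1-v_2)(s)$ forces both components to vanish.

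For (ii) $\Rightarrow$ (i), which is the main obstacle, the cleanest route is via the associated evolution semigroup on $\mathrm{BUC}(\mathbb{R};X_0)$ (bounded uniformly continuous functions), defined by
\begin{equation*}
(T_\mathcal{F}(t)\phi)(s) = U_\mathcal{F}(s,s-t)\phi(s-t),\qquad t\ge 0.
\end{equation*}
Standard arguments show $\{T_\mathcal{F}(t)\}_{t\ge 0}$ is a $C_0$-semigroup; denote its generator by $\Gamma_\mathcal{F}$. The content of hypothesis (ii) is that, interpreted through the integrated-solution formalism, the operator $\Gamma_\mathcal{F}$ is bijective from its domain onto $\mathrm{BUC}(\mathbb{R};X)$ with bounded inverse (the inverse being precisely the map $f\mapsto v$ from (ii)). I would then invoke the Chicone--Latushkin/Lathushkin--Montgomery-Smith characterization: for an evolution family on a Banach space, $0\in\rho(\Gamma_\mathcal{F})$ is equivalent to the hyperbolicity of the evolution semigroup, which in turn is equivalent to the exponential dichotomy of $\{U_\mathcal{F}(t,s)\}$. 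The projections $P^{s/u}(t)$ are then recovered as the spectral projections associated with the splitting of $T_\mathcal{F}(t)$ relative to the unit circle, evaluated pointwise.

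The principal technical difficulty lies precisely in making the evolution semigroup framework compatible with the non-densely defined operator $\mathcal{A}$: one cannot directly apply the classical hyperbolicity-invertibility theory because the standard variation of constants formula fails. This is where the MR-operator property of $\mathcal{A}$ and the Stieltjes convolution estimate of Lemma \ref{lem2.8} are indispensable, as they allow one to identify the domain of $\Gamma_\mathcal{F}$ with the subspace of $\mathrm{BUC}(\mathbb{R};X_0)$ consisting of integrated solutions of the inhomogeneous problem, and to legitimize all manipulations that in the classical case are straightforward consequences of the semigroup acting on its dense domain. Once this identification is made, the spectral characterization of hyperbolicity transfers to the present setting and yields the dichotomy, completing the equivalence.
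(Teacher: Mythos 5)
The paper does not actually prove this lemma: it is stated as a direct citation of \cite[Theorem 5.13]{Magal2016}, with no proof supplied. Your blind attempt is therefore not ``the same route as the paper'' or ``a different route from the paper''; it is an attempt to reconstruct a proof the paper outsources entirely. On its own merits, the attempt has two gaps that matter.

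In the direction $(\mathrm{i})\Rightarrow(\mathrm{ii})$, the Green's-function formula you write down,
\begin{equation*}
v(t) = \frac{d}{dt}\int_{-\infty}^{t} S_\mathcal{A}(t-s) P^s(s)f(s)\,ds - \frac{d}{dt}\int_{t}^{+\infty} S_\mathcal{A}(t-s) P^u(s)f(s)\,ds,
\end{equation*}
is built from the integrated semigroup $S_\mathcal{A}$ of the \emph{unperturbed} operator $\mathcal{A}$, while the exponential decay that you need to make the tails converge belongs to the \emph{perturbed} evolution family $U_\mathcal{F}(t,s)$. These are different objects: Lemma~\ref{lem2.8} bounds the Stieltjes convolution with $S_\mathcal{A}$ in terms of the growth bound of $T_{\mathcal{A}_0}$, which carries no dichotomy information about $U_\mathcal{F}$. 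To make a Perron-type formula work in the non-densely defined, non-autonomous setting one needs a Green's operator built from $U_\mathcal{F}$ itself (via the fixed-point construction that defines $U_\mathcal{F}$ as the solution of the $S_\mathcal{A}$-variation-of-constants equation), not from $S_\mathcal{A}$ with the dichotomy projections bolted on. As written the formula does not converge for the stated reasons, and substituting it into the equation would not give back $\mathcal{A}v + \mathcal{F}(t)v + f$ because the $\mathcal{F}(t)v$ term is not generated.

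In the direction $(\mathrm{ii})\Rightarrow(\mathrm{i})$, invoking the evolution-semigroup / Chicone--Latushkin hyperbolicity characterization is a legitimate strategy, but you identify the crucial obstruction yourself --- showing that $\Gamma_\mathcal{F}$ is invertible \emph{and} that its domain can be identified with integrated solutions, without the classical variation-of-constants formula --- and then declare that ``once this identification is made'' everything transfers. That identification is precisely the content of the theorem being cited; deferring it is not a proof. In short, both directions rest on steps that require the same machinery the paper avoids reproducing by appealing to \cite[Theorem 5.13]{Magal2016}, and in the forward direction the candidate formula is incorrect.
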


With the exponential dichotomy verified, we could study stable and unstable foliations of \eqref{eq5.2} by similar arguments in Theorem \ref{thm4.2}. For the exponential trichotomy, one can refer to Chapter 5 of \cite{PM_2009} about the age-structured model with bifurcation parameter.

\subsection{Stochastic parabolic equation}
We consider the following stochastic parabolic
equation with a nonlinear(and nonlocal) boundary condition
\begin{equation}\label{eq5.1}
	\left\lbrace
\begin{split}
&\frac{\partial u}{\partial t}=\frac{\partial^2u}{\partial x^2}+{{\pi}^2}u\left(t,x\right)+P\left(u\left(t,x\right)\right)+u\left(t,x\right)\circ dW\left(t\right), t\geq0, x>0,\\
&-\frac{u\left(t,0\right)}{\partial x}=Q\left(u\left(t,\cdot\right)\right),
\\
&u\left(0,\cdot\right)=u_0\in L^p\left((0,+\infty),\mathbb{R}\right).
\end{split}
\right.
\end{equation}
where $P: L^p\left((0,+\infty),\mathbb{R}\right)\to L^p\left((0,+\infty),\mathbb{R}\right)$ and $Q: L^p\left((0,+\infty),\mathbb{R}\right)\to \mathbb{R}$ are arbitrary nonlinear maps which are assumed to be globally Lipschitz continuous and $P\left(0\right)=Q\left(0\right)=0$. $W$ is a one-dimensional Brownian motion. Make $X:=\mathbb{R}\times L^p\left((0,+\infty),\mathbb{R}\right)$. Consider  a linear operator $A:D\left(A\right)\subset X\to X$ defined by
\begin{equation*}
A
\begin{pmatrix}
0\\
\varphi\\
\end{pmatrix}=
\begin{pmatrix}
\varphi'\left(0\right)\\
\varphi''\\
\end{pmatrix}
\end{equation*}
with $D\left(A\right)=\{0_\mathbb{R}\}\times W^{2,p}\left(\left(0,+\infty\right),\mathbb{R}\right)$. Notice that $A_0$, the part of $A$ in $\overline{D\left(A\right)}=\{0_\mathbb{R}\}\times L^p\left(\left(0,+\infty\right),\mathbb{R}\right)$, is the generator of the strongly continuous semigroup of bounded operators associated to
\begin{equation*}
	\left\lbrace
\begin{split}
&\frac{\partial u}{\partial t}=\frac{\partial^2u}{\partial x^2},~~ t\geq0, x>0,\\
&-\frac{u\left(t,0\right)}{\partial x}=0,
\\
&u\left(0,\cdot\right)=u_0\in L^p\left((0,+\infty),\mathbb{R}\right).
\end{split}
\right.
\end{equation*}
That is 
\begin{equation*}
	A_0
\begin{pmatrix}
0\\
\varphi\\
\end{pmatrix}=
\begin{pmatrix}
0\\
\varphi''\\
\end{pmatrix}
\end{equation*}
with
\begin{equation*}
	D\left(A_0\right)=\bigg \{\binom{0}{\varphi}\in\{0_\mathbb{R}\}\times W^{2,p}\left(\left(0,+\infty\right),\mathbb{R}\right):\varphi'\left(0\right)=0\bigg\}.
\end{equation*}
Take 
\begin{equation*}
F
\begin{pmatrix}
0\\

\varphi
\end{pmatrix}=
\begin{pmatrix}
Q\left(\varphi\right)\\
P\left(\varphi\right)\\

\end{pmatrix},\quad u^*\left(t\right)=
\begin{pmatrix}
0_{\mathbb{R}^2}\\
u\left(t,\cdot\right)
\end{pmatrix},\quad \omega\left(t\right)=
\begin{pmatrix}
0\\
W\left(t\right)
\end{pmatrix} .
\end{equation*}
Then we can rewrite \eqref{eq5.1} as
\begin{equation*}
\left\lbrace
\begin{split}
&du^*\left(t\right)= \left(\left(A+\pi^2I\right)u^*\left(t\right)+F\left(u^*\left(t\right)\right)\right)dt+u^*\left(t\right)\circ d\omega\left(t\right), ~t\geq0,\\
&u^*\left(0,\cdot\right)=u^*_0\in \overline{D\left(A\right)}:=X_0.
\end{split}
\right.
\end{equation*}
According to \cite[Lemma 6.1]{Magal2016}, $A_0$ is the infinitesimal generator of a $C_0$-semigroup $\{T_{A_0}\left(t\right)\}_{t\geq0}$ on $X_0$. Moreover, $\left(A+\pi^2{I}\right)_0$, the part of $\left(A+\pi^2{I}\right)$, is the infinitesimal generator of a $C_0$-semigroup on $X_0$ denoted by $\{T_{\left(A+\pi^2{I}\right)_0}\left(t\right)\}_{t\geq0}$. By \cite[Proposition 2.5]{PM2007,PM2009}, $\left(A+\pi^2{I}\right)$ generates a integrated semigroup $\{S_{\left(A+\pi^2{I}\right)}\left(t\right)\}_{t\geq0}$. The spectrum of $A_0$ is given by
\begin{equation*}
	\sigma\left(A_0\right)=\left\{-\left(\pi{k}\right)^2:k\in\mathbb{N}\right\}.
\end{equation*}
And by \cite[Lemma 6.3]{Magal2016}, we have the following estimations
\begin{equation*}
0 < \liminf_{\lambda\to+\infty}\lambda^{1/p^*}\left\|\left(\lambda I-A\right)^{-1}\right\|_{\mathcal{L}\left(X\right)}\le
 \limsup_{\lambda\to+\infty}\lambda^{1/p^*}\left\|\left(\lambda I-A\right)^{-1}\right\|_{\mathcal{L}\left(X\right)}<\infty.
\end{equation*}
where $p^*=\frac{2p}{1+p}$. We can obtain that $A$ is non-densely defined and is not a Hille-Yosida operator when $p>1$. Assumption \ref{as2.1} is obviously satisfied by the Lipschitz continuity of $P, Q$. We make another assumption additionally. 
\begin{assumption}\label{as5.1}
	Let $A:D\left(A\right)\subset X\to X$ be a  linear operator on a Banach space $X$. Assume that there exist two constants, $\omega_A\in\mathbb{R}$ and $M_A>0$, such that 
	\begin{itemize}
		\item  [$\left(\mathrm{i}\right)$] $\rho\left(A_0\right)\supset\{\lambda\in\mathbb{C}:\rm Re\left(\lambda\right)>\omega_A\}$ and
	\begin{equation*}
		\left\Vert\left(\lambda-\omega_A\right)\left(\lambda I-A_0\right)^{-1}\right\Vert_{\mathcal{L}\left(X_0\right)}\leq M_A,~\forall\lambda\in\mathbb{C},~ \rm Re\left(\lambda\right)>\omega_A;
	\end{equation*}
		\item [$\left(\mathrm{ii}\right)$]$\left(\omega_A,+\infty\right)\subset \rho\left(A\right)$ and there exists $p^*>1$ such that
	\begin{equation*}
 \limsup_{\lambda\to+\infty}\lambda^{1/p^*}\left\|\left(\lambda I-A\right)^{-1}\right\|_{\mathcal{L}\left(X\right)}<\infty.
\end{equation*}
	\end{itemize}
\end{assumption}
The above asumption can be reformulated by saying that $A_0$ is sectorial and $A$ is $\frac{1}{p^*}$-almost sectorial. By \cite[Lemma 6.1-6.3]{Magal2016}, Assumption \ref{as5.1} is satisfied. Then by \cite[Theorem 3.11]{DP2009}, for fixed $\lambda>\omega_A$, $\hat p>p^*$ and $\tau_0>0$, for each $f\in L^{\hat p}\left([0,\tau_0],X\right)$, the following esitimate holds
\begin{equation*}
	\big\Vert\left(S\diamond f\right)\left(t\right)\big\Vert\leq M_{\beta,\tau_0}\int_0^t\left(t-s\right)^{-\beta}e^{\omega_A\left(t-s\right)}\Vert f\left(s\right) \Vert ~ds,~\forall{t\in[0,\tau_0]},
\end{equation*}	
where $\beta\in\left(1-\frac{1}{p^*},1-\frac{1}{\hat p}\right)$ and $M_{\beta,\tau_0}$ is some positive constant. Therefore
\begin{equation*}
	\big\Vert\left(S\diamond f\right)\left(t\right)\big\Vert\leq \left(M_{\beta,\tau_0}\int_0^t\left(t-s\right)^{-\beta}e^{\omega_A\left(t-s\right)}ds\right)\underset{s\in[0,t]}{\sup}\left\Vert{f\left(s\right)}\right\Vert,~\forall{t\in[0,\tau_0]},
\end{equation*}	
Then by \cite[Lemma 6.4]{Magal2016}, $A$ is a MR operator.
Then we check Assumption \ref{as2.2}. In fact, by \cite[Lemma 6.2]{Magal2016} and \cite[Lemma 2.1]{PM_2009}, we have $\sigma\left(A_0\right)=\sigma\left(A\right)$, then
\begin{equation*}
\begin{split}
	\sigma\left(A+\pi^2I\right)=\sigma\left(A_0+\pi^2I\right)&=\left\{-\left(\pi{k}\right)^2+\pi^2:k\in\mathbb{N}\right\}\\&=\left\{\pi^2, 0, -3\pi^2, -8\pi^2, -15\pi^2,...\right\} ,
\end{split}
\end{equation*}
and each enginvalue $\lambda_k=\left(1-k^2\right)\pi^2$ corresponding to the enginfunction
\begin{equation*}
	\psi_k\left(x\right)=\sin\left(\pi{kx}\right).
\end{equation*}
By Remark \ref{rem}, we could take $\alpha=\pi^2-\varepsilon^*$, $\beta=\pi^2+\varepsilon^*$ and $\gamma=\gamma^*\in\left(0, \pi^2-\varepsilon^*\right)$ for some $\varepsilon^*\in\left(0,\pi^2\right)$. Thus the spectrum of $A_0+\pi^2I$ could be split into three parts $\sigma^{0s}=\left\{\left(1-k\right)^2\pi^2:k=2,3,...\right\},\sigma^{0c}=\left\{0\right\},\sigma^{0u}=\left\{\pi^2\right\}$. So $A_0+\pi^2I$ satisfies the exponential trichotomy condition and the Assumption \ref{as2.2} is satisfied. The center subspace $X_{0c}$, stable subspace $X_{0s}$ and unstable subspace $X_{0u}$ of $X_0$ are $\text{span}\left\{\sin\left(\pi{x}\right)\right\}$, $\text{span}\left\{\sin\left(\pi{nx}\right),n=2,3,...\right\}$, and $\left\{0\right\}$, respectively. 

Therefore, the results obtained in Theorems \ref{thm3.6} and \ref{thm4.2} are applicable and thus justify the existence and smoothness of center invariant  foliations for \eqref{eq5.1}.

\section*{Acknowledgments}
Huang is partially supportted by the National Natural Science Foundation of China (No.12301020), Scientific Research Program Funds of NUDT(No. 22-ZZCX-016). Zeng is partially supported by the National Natural Science Foundation of China (No. 12271177), Natural Science Foundation of Guangdong Province (No. 2023A1515010622). 
 
\section*{References}

\bibliography{mybibfile}

\end{document}